\newtheorem{theorem}{Theorem}[section]
\newtheorem{corollary}[theorem]{Corollary}
\newtheorem{definition}[theorem]{Definition}
\newtheorem{lemma}[theorem]{Lemma}
\newtheorem{proposition}[theorem]{Proposition}
\theoremstyle{remark}
\newtheorem{remark}[theorem]{Remark}
\numberwithin{equation}{section}
\begin{document}

\title[Boundary Conditions with Memory]{Weak Exponential Attractors for Coleman--Gurtin Equations with Dynamic Boundary Conditions Possessing Different Memory Kernels}

\author[J. L. Shomberg]{Joseph L. Shomberg}

\subjclass[2010]{35B40, 35B41, 45K05, 35Q79.}

\keywords{Coleman--Gurtin equation, dynamic boundary conditions, memory relaxation, weak exponential attractor, basin of attraction}

\address{Department of Mathematics and Computer Science, Providence
College, Providence, RI 02918, USA, \\
\tt{{jshomber@providence.edu} }}

\date{\today}

\begin{abstract}
The well-posedness of a generalized Coleman--Gurtin equation equipped with dynamic boundary conditions with memory was recently established by C.G. Gal and the author. 
Additionally, it was established by the author that the problem admits a finite dimensional global attractor and a robust family of exponential attractors in the case where singularly perturbed memory kernels defined on the interior of the domain and on the boundary of the domain coincide.
In the present article we report advances concerning the asymptotic behavior of this heat transfer model when the memory kernels do {\em not} coincide. 
In this setting we obtain a weak exponential attractor whose basin of attraction is the entire phase space, that is, a finite dimensional exponentially attracting compact set in the weak topology of the phase space. 
This result completes an analysis of the finite dimensional attractors for the generalized Coleman--Gurtin equation equipped with dynamic boundary conditions with memory.
\end{abstract}

\maketitle

\tableofcontents

\section{Introduction to the model problem}

This article presents the asymptotic behavior of a heat transfer model with memory effects. 
Following in the spirit of Gurtin and Mizel's seminal contribution \cite{Coleman&Mizel63} is the article \cite{Gal-Shomberg15-2} where the author provides a derivation of a thermodynamic process taking place in a bounded container/domain $\Omega\subset\mathbb{R}^d$ under the five basic constitutive functions: specific internal energy, specific entropy, heat flux, absolute temperature and heat supply.
Novel to this derivation however is the assumption that heat may be supplied from the exterior of the domain; namely, from the boundary of the container. 
This type of contribution is not present in \cite{Coleman&Mizel63}, nor the more recent \cite{CDGP-2010}.
We also consider the departure from Fourier's heat law where the heat fluxes involve memory to account for inertial effects.
This, importantly, prevent instantaneous signal propagation that is associated with the standard heat equation.

The principle equations under consideration in this article are from \cite{Gal-Shomberg15-2} and are the following:
\begin{align}
\partial_t u - \omega\Delta u - (1-\omega) \int_0^\infty k_\Omega(s) \Delta u(x,t-s) {\rm d}s + f(u) + \alpha(1-\omega) \int_0^\infty k_\Omega(s) u(x,t-s) {\rm d}s =0  \label{eq1m}
\end{align}
in $\Omega \times (0,\infty)$ subject to the boundary condition
\begin{align}
\partial_t u - \omega\Delta_\Gamma u + \omega\partial_{\bf n} u & + (1-\omega) \int_0^\infty k_\Gamma(s)\partial_{\bf n} u(x,t-s){\rm d}s  \notag \\
& + (1-\omega) \int_0^\infty k_\Gamma(s)(-\Delta_\Gamma+\beta)u (x,t-s){\rm d}s + g(u) = 0  \label{eq2m} 
\end{align}
on $\Gamma \times (0,\infty)$ for every $\alpha,\beta\ge0$, $\omega \in [0,1)$, and where $k:[0,\infty)\rightarrow \mathbb{R}$ is a continuous nonnegative function, smooth on $(0,\infty)$, vanishing at infinity and satisfying the relation
\begin{equation*}
\int_0^\infty k_S(s) {\rm d}s = 1, \quad S\in\{\Omega,\Gamma\},
\end{equation*}
$\partial_{\bf n}$ represents the normal derivative and $-\Delta_\Gamma$ is the Laplace--Beltrami operator.
The cases $\omega =0$ and $\omega >0$ in (\ref{eq1m}) are usually referred
as the Gurtin--Pipkin and the Coleman--Gurtin models, respectively. 
Let $\omega \in [0,1)$ be fixed. 
Notice that if we (formally) choose $k_S=\delta_0$ (the Dirac mass at zero), equations (\ref{eq1m})-(\ref{eq2m}) turn into the following system,
\begin{equation}
\partial_tu - \Delta u + f(u) + \alpha(1-\omega)u = 0, \quad \text{in}\ \Omega \times (0,\infty),  \label{eq3d}
\end{equation}
\begin{equation}
\partial_tu - \Delta_\Gamma u + \partial_{\bf n} u + g(u) + \beta(1-\omega) u = 0, \quad \text{on}\ \Gamma \times (0,\infty). \label{eq4d}
\end{equation}

Memory used as a ``hyperbolic-like relaxation'' term appears in \cite{CGG11} where the authors consider a viscous Cahn--Hilliard equation with dynamic dynamic boundary conditions.
Dynamic boundary conditions can be used to account for frictional damping or involve sources/sinks on the boundary.
In addition, the Cahn--Hilliard equation serves as a motivating example for using dynamic boundary conditions.
The typical static Neumann (standard conserving) boundary condition invokes the unnatural property that the interface separating two phases be orthogonal to the boundary.
This quality does not necessarily appear with dynamic boundary conditions.
Several other phase-field type equations with memory and dynamic boundary conditions appear in \cite{Conti-Zelati10} and \cite{Gal&Grasselli12}.

A tremendous amount of recent activity in dissipative dynamical systems comes from applications that admit attractors.
By definition, dissipative dynamical systems possess a bounded absorbing set to which any nonempty bounded subset is attracted and absorbed in some finite time.
Absorbing sets lack further important descriptions such as compactness and finite dimensionality. 
Exponential attractors (also called inertial sets) are formalized in \cite{EFNT95}.
Although not unique, exponential attractors are finite dimensional compact subsets that exponentially attract nonempty bounded subsets of the phase space. 
In several applications (cf. e.g. \cite{CPS06}) the basin of attraction of the exponential attractor can be tied to the existence of a unique global attractor (also called universal attractors); namely, the asymptotic compactness property of the solution operators. 
Contrary to exponential attractors, global attractors do not enjoy the finite dimensionality description.
Concerning problems involving memory, some recent applications include \cite{GGPS05,GMPZ10,Grasselli&Pata05,KloedenRealSun2011,Plinio&Pata09}.

It is the article \cite{Pata&Zelik06-2} where we first see the construction of the so-called weak exponential attractor. 
The compactness enjoyed by the attractor is inherited from a more regular absorbing set which itself is compact in the weak phase space.
The ``higher-order'' estimates required to obtain the compact absorbing set are not available for the type of wave equation examined in \cite{Pata&Zelik06-2}.
Complicating matters in our presentation is the functional formulation of the memory terms.
Indeed, compact embeddings between memory spaces is a delicate issue that is discussed below.
So in some contexts it is better to build off the absorbing set that it obtained in the standard (weak {\em energy}) phase space.
Hence, a weak exponential attractor inherits its compactness by the compact injection into the weak topology of the standard phase space.

The article \cite{CDGP-2010} presents the Coleman--Gurtin equation with Dirichlet boundary conditions and nonlinear terms allowing critical growth. 
The authors show the existence of global attractors, as well as exponential attractors, with optimal regularity.
Some improvements to these results, though for nonlinearities under the critical limit, appear in \cite{Gal-Shomberg15-2} and \cite{Shomberg-reacg16}.
Indeed, \cite{Gal-Shomberg15-2} contains a treatment on global weak solutions.
Strong solutions are developed under the assumption that the memory kernel on the interior matches the memory kernel on the boundary. 
Additionally, in the more general setting when the two kernels do not coincide, we develop quasi-strong solutions. 
These quasi-strong solutions have just enough regularity to allow a compact embedding of the memory space into a lower-ordered space whereby allowing the construction of a weak exponential attractor. 
To clarify our motivation, consider Banach spaces $X, Y, W, Z, M$, such that $W\subset X$, $Z\subset Y$ and $M\subset Z$ continuously, and $M\subset Y$ compactly, and where $X'$ and $Y'$ denote the dual spaces of, respectively, $X$ and $Y$.
In this generic setting, our story unfolds as follows: for appropriate spaces, the global weak solutions found in \cite{Gal-Shomberg15-2} may be set in $X\times Y$ and the global quasi-strong solutions in $W\times Z$.
Moreover, one component of the quasi-strong solution is uniformly {\em bounded} in $W$, and we are able to further show that the global quasi-strong solutions also belong to $W\times M$; i.e., to a more regular space in the second component.
Since $M$ is compactly contained in $Y$, we deduce, using the adjoint of the standard embeddings $W\subset X$ and $Z\subset Y$, that $W\times M$ is compact in $X'\times Y'.$
Finally, it is for these global quasi-strong solutions that we obtain an exponential attractor in the topology of $X'\times Y'$.

With respect to the above development, we carefully treat the following issues:

\begin{description}
\item[1] Well-posedness of the system comprising of equations (\ref{eq1m})-(\ref{eq2m}) and (\ref{eq3d})-(\ref{eq4d}).
We include results to the global existence of weak solutions and so-called quasi-strong solutions.
There is no restriction to the size of the initial datum and the global weak solutions generate a Lipschitz continuous semigroup of solution operators that is uniformly continuous in time on compact intervals.

\item[2] Dissipation exhibited by the model problem in the sense that the solution operators admit a bounded absorbing set in the weak energy phase space.
We do not assume that memory kernels coincide.
Since $k_\Omega\not=k_\Gamma$, we do eventually need to implement an assumption on the size of $k_\Gamma(0)$ in order to perform standard analysis arguments.
At this point it is crucial to obtain an absorbing set in a phase space that includes a memory component that can be compactly imbedded into a lower-order memory component.
This step is where we require the use of quasi-strong solutions.
Precisely, we are showing the existence of an absorbing set admitted by the quasi-strong solutions that embeds compactly in the weak topology of the phase space for the weak solutions.

\item[3] The existence of a weak exponential attractor (a finite dimensional compact attractor in the weak topology of the phase space) for the model problems.
Due to a transitivity of exponential attraction result, we know the basin of attraction of the exponential attractor is the entire weak energy phase space equipped with the weak topology. 
For PDEs with memory in dynamic boundary conditions, it seems that this is the first general result where a problem exhibits a (weak) exponential attractor.
\end{description}

Following recent convention (cf. e.g. \cite{CDGP-2010,CPS05,CPS06,Grasselli&Pata02-2}) we introduce the so-called integrated past history of $u$, i.e., the auxiliary variable
\begin{equation*}
\eta^t(x,s) = \int_0^s u(x,t-\sigma) {\rm d}\sigma,
\end{equation*}
for $s,t>0.$ 
Next, by setting 
\begin{equation*}  
\mu_S(s) = -(1-\omega)k'_S(s), \quad S\in\{\Omega,\Gamma\},
\end{equation*}
formal integration by parts into (\ref{eq1m}) and (\ref{eq2m}) yields
\begin{align}
(1-\omega) \int_0^\infty k_S(s) u(x,t-s) {\rm d}s & = \int_0^\infty \mu_S(s) \eta^t(x,s) {\rm d}s, \quad S\in\{\Omega,\Gamma\}, \notag \\
(1-\omega) \int_0^\infty k_\Omega(s) \Delta u(x,t-s) {\rm d}s & =\int_0^\infty\mu_\Omega(s) \Delta \eta^t(x,s) {\rm d}s, \notag \\ 
(1-\omega) \int_0^\infty k_\Gamma(s) \partial_{\bf n} u(x,t-s) {\rm d}s & = \int_0^\infty \mu_\Gamma(s) \partial_{\bf n} \eta^t(x,s) {\rm d}s, \notag 
\end{align}
and
\begin{equation*}
(1-\omega) \int_0^\infty k_\Gamma(s)(-\Delta_\Gamma + \beta) u(x,t-s) {\rm d}s = \int_0^\infty \mu_\Gamma(s) (-\Delta_\Gamma + \beta) \eta^t(x,s) {\rm d}s. 
\end{equation*}

We now state

Problem {\bf P}:
Let $\alpha,\beta\ge0,$ and $\omega\in(0,1)$. 
Find real-valued functions $(u,\eta)=(u(x,t),\eta^t(x,s))$ defined in $\Omega\times(0,\infty)$ such that 
\begin{equation}
\partial_t u - \omega\Delta u - \int_0^\infty \mu_\Omega(s) \Delta\eta^t(x,s) {\rm d}s + \alpha\int_0^\infty \mu_\Omega(s) \eta^t(x,s) {\rm d}s + f(u) = 0  \label{problemp-1}
\end{equation}
in $\Omega\times(0,\infty),$ subject to the boundary conditions
\begin{equation}
\partial_t u - \omega\Delta_\Gamma u + \omega\partial_{\bf n} u + \int_0^\infty \mu_\Gamma(s)\partial_{\bf n} \eta^t(x,s) {\rm d}s + \int_0^\infty \mu_\Gamma(s) (-\Delta_\Gamma + \beta) \eta^t(x,s) {\rm d}s + g(u) = 0  \label{problemp-2}
\end{equation}
on $\Gamma\times(0,\infty),$ and 
\begin{equation}
\partial_t \eta^t(x,s) + \partial_s \eta^t(x,s) = u(x,t) \quad \text{in}\ {\overline{\Omega}}\times(0,\infty),  \label{problemp-3}
\end{equation}
with
\begin{equation}
\eta^t(x,0) = 0 \quad \text{in}\ {\overline{\Omega}}\times(0,\infty),  \label{problemp-4}
\end{equation}
and the initial conditions 
\begin{equation}
u(x,0) = u_0(x) \quad \text{in}\quad \Omega, \quad u(x,0) = v_0(x) \quad \text{on}\ \Gamma,  \label{problemp-5}
\end{equation}
\begin{equation}
\eta^0(x,s) = \eta_0(x,s):=\int_0^su_0(x,-y)dy \quad\text{in}\ \Omega, \quad \text{for}\ s>0,  \label{problemp-6}
\end{equation}
and
\begin{equation}
\eta^0(x,s) = \xi_0(x,s):=\int_0^sv_0(x,-y)dy \quad\text{on}\ \Gamma, \quad \text{for}\ s>0.  \label{problemp-7}
\end{equation}

\begin{remark}  \label{on-traces}
It need not be the case that the boundary traces of $u_0$ and $\eta_0$ be equal to $v_0$ and $\xi_0$, respectively. 
Thus, we are solving a much more general problem in which equation (\ref{problemp-1}) is interpreted as an evolution equation in the interior $\Omega$ properly coupled with the equation (\ref{problemp-2}) on the boundary $\Gamma$.
Finally, according to Definition \ref{d:weak-solution}, we regard both $\eta_0$ and $\xi_0$ as being independent of the initial data $u_0$ and $v_0.$
Indeed, below we will consider a more general problem with respect to the original one. 
\end{remark}

We now give the framework used to prove Hadamard well-posedness for Problem {\textbf{P}}. 
Consider the space $\mathbb{X}^2:=L^2(\overline{\Omega}, d\mu) ,$ where
\begin{equation*}
{\rm d}\mu = {\rm d}x_{\mid \Omega }\oplus {\rm d}\sigma ,
\end{equation*}
where ${\rm d}x$ denotes the Lebesgue measure on $\Omega$ and ${\rm d}\sigma$ denotes the natural surface measure on $\Gamma$. 
It is easy to see that $\mathbb{X}^2=L^2(\Omega,{\rm d}x)\oplus L^2( \Gamma,{\rm d}\sigma)$ may be identified under the natural norm
\begin{equation*}
\|u\|_{\mathbb{X}^2}^2=\int_\Omega|u|^2{\rm d}x + \int_\Gamma|u|^2{\rm d}\sigma.
\end{equation*}
Moreover, if we identify every $u\in C(\overline{\Omega})$ with $U=(u_{\mid\Omega},u_{\mid\Gamma}) \in C(\Omega)\times C(\Gamma)$, we may also define $\mathbb{X}^2$
to be the completion of $C(\overline{\Omega})$ in the norm $\|\cdot\|_{\mathbb{X}^2}$. 
In general, any function $u\in \mathbb{X}^2$ will be of the form $u=\binom{u_1}{u_2}$ with $u_1\in L^2(\Omega,{\rm d}x)$ and $u_2\in L^2(\Gamma,{\rm d}\sigma),$ and there need not be
any connection between $u_1$ and $u_2$. 
From now on, the inner product in the Hilbert space $\mathbb{X}^2$ will be denoted by $\langle \cdot,\cdot \rangle_{\mathbb{X}^2}.$ 
Hereafter, the spaces $L^2(\Omega,{\rm d}x) $ and $L^2(\Gamma,{\rm d}\sigma)$ will simply be denoted by $L^2(\Omega) $ and $L^2(\Gamma)$.

Recall that the Dirichlet trace map ${\rm tr_D}:C^\infty (\overline{\Omega}) \rightarrow C^\infty(\Gamma),$ defined by ${\rm tr_D}(u) =u_{\mid\Gamma}$ extends to a linear continuous operator ${\rm tr_D}:H^r(\Omega)\rightarrow H^{r-1/2}(\Gamma),$ for all $r>1/2$, which is onto
for $1/2<r<3/2.$ 
This map also possesses a bounded right inverse ${\rm tr_D}^{-1}:H^{r-1/2}(\Gamma) \rightarrow H^r(\Omega)$ such that ${\rm tr_D}({\rm tr_D}^{-1}\psi) =\psi ,$ for any $\psi \in H^{r-1/2}(\Gamma) $. 
We can thus introduce the subspaces of $H^r(\Omega) \times H^{r-1/2}(\Gamma)$ and $H^r(\Omega) \times H^r(\Gamma)$, respectively, by
\begin{align}
\mathbb{V}_0^r & :=\{U=(u,\psi) \in H^r(\Omega) \times H^{r-1/2}(\Gamma) :{\mathrm{tr_{D}}}(u) =\psi \},  \notag \\
\mathbb{V}^r & :=\{U=(u,\psi) \in \mathbb{V}_0^r:{\mathrm{tr_{D}}}(u) =\psi \in H^r(\Gamma) \},  \notag
\end{align}
for every $r>1/2,$ and note that we have the following dense and compact embeddings $\mathbb{V}_0^{r_1}\subset \mathbb{V}_0^{r_2},$ for any $r_1>r_2>1/2$ (by definition, this also true for the sequence of spaces $\mathbb{V}^{r_1}\subset \mathbb{V}^{r_2}$). 
Naturally, the norm on the spaces $\mathbb{V}_0^r,$ $\mathbb{V}^r$ are defined by
\begin{equation}
\|U\|_{\mathbb{V}_0^r}^2 := \|u\|_{H^r(\Omega)}^2 + \|\psi\|_{H^{r-1/2}(\Gamma )}^2, \quad \|U\|_{\mathbb{V}^r}^2 := \|u\|_{H^r(\Omega)}^2 + \|\psi\|_{H^r(\Gamma )}^2. \notag
\end{equation}
In the sequel we are interested in the following equivalent norm in $\mathbb{V}^1$
\begin{equation*}
\|u\|_{\mathbb{V}^1}^2:=\int_\Omega \left( |\nabla u|^2 + \alpha|u|^2 \right) {\rm d}x + \int_\Gamma \left( |\nabla_\Gamma u|^2 + \beta|u|^2 \right) {\rm d}\sigma.  \label{v1b}
\end{equation*}
Naturally, the norm on the space $\mathbb{V}^r$ is defined as
\begin{equation*} \label{Vr-norm}
\|u\|^2_{\mathbb{V}^r} := \|u\|^2_{H^r(\Omega)} + \|u\|^2_{H^r(\Gamma)}.
\end{equation*}
For $U=(u,u_{\mid\Gamma})^{\rm tr}\in\mathbb{V}^1$, let $C_\Omega>0$
denote the best constant in which the Sobolev--Poincar\'e inequality holds 
\begin{equation*}  \label{Poincare}
\|u-\langle u \rangle_\Gamma \|_{L^s(\Omega)} \le C_\Omega \| \nabla u\|_{L^s(\Omega)},
\end{equation*}
for $s\geq 1$ (see \cite[Lemma 3.1]{RBT01}).
Here 
\[
\langle u \rangle_\Gamma := \frac{1}{|\Gamma|}\int_\Gamma u_{\mid\Gamma} {\rm d}\sigma.
\]

Let us now introduce the spaces for the memory variable $\eta $. 
For a nonnegative, not identically equal to zero and measurable function $\theta_S$, $S\in\{\Omega,\Gamma\}$, defined on $\mathbb{R}_+$, and a real Hilbert space $W$ (with inner product denoted by $\langle \cdot,\cdot \rangle_W$), let $L_{\theta_S}^2(\mathbb{R}_+;W)$ be the Hilbert space of $W$-valued functions on $\mathbb{R}_+$, endowed with the following inner product
\begin{equation*}  \label{sc-2}
\langle \phi_1,\phi_2\rangle_{L_{\theta_S}^2(\mathbb{R}_+;W)} := \int_0^\infty \theta_S(s) \langle \phi_1(s),\phi_2(s) \rangle _W {\rm d}s.
\end{equation*}
Moreover, for each $r>1/2$ we define
\[
L^2_{\theta_\Omega\oplus\theta_\Gamma}(\mathbb{R}_+;\mathbb{V}^r_0)\simeq L^2_{\theta_\Omega}(\mathbb{R}_+;\mathbb{V}^r)\oplus L^2_{\theta_\Gamma}(\mathbb{R}_+;H^r(\Gamma))
\]
as the Hilbert space of $\mathbb{V}^r$-valued functions $(\eta,\xi)^{\rm tr}$ on $\mathbb{R}_+$ endowed with the inner product
\begin{align*}
& \left\langle \binom{\eta_1}{\xi_1},\binom{\eta_2}{\xi_2} \right\rangle_{L^2_{\theta_\Omega\oplus\theta_\Gamma}(\mathbb{R}_+;\mathbb{V}^r)}  \notag \\
&=\int_0^\infty (\theta_\Omega(s) \langle \eta_1(s),\eta_2(s) \rangle_{H^r(\Omega)} + \theta_\Gamma(s) \langle \xi_1(s),\xi_2(s) \rangle_{H^r(\Gamma)}) {\rm d}s. \label{sc}
\end{align*}

Consequently, for $r>1/2$ we set
\begin{equation*}
\mathcal{M}_\Omega^0 := L_{\mu_\Omega}^2(\mathbb{R}_+;L^2(\Omega)), \quad \mathcal{M}_\Omega^r := L_{\mu_\Omega}^2(\mathbb{R}_+;\mathbb{V}_0^r), \quad \mathcal{M}_\Gamma^r := L_{\mu_\Gamma}^2(\mathbb{R}_+;H^r(\Gamma))
\end{equation*}
and
\begin{equation*}
\mathcal{M}_{\Omega,\Gamma}^0 := L_{\mu_\Omega\oplus \mu_\Gamma
}^2(\mathbb{R}_+;\mathbb{X}^2), \quad \mathcal{M}_{\Omega,\Gamma}^r := L_{\mu _\Omega\oplus \mu_\Gamma}^2( \mathbb{R}_+;\mathbb{V}^r).
\end{equation*}
Clearly, because of the topological identification $H^r(\Omega) \simeq \mathbb{V}_0^r$, one has the inclusion $\mathcal{M}_{\Omega,\Gamma}^r\subset \mathcal{M}_\Omega^r$ for each $r>1/2$.
We will also consider Hilbert spaces of the form $W_{\mu_\Omega}^{k,2}(\mathbb{R}_+;\mathbb{V}_0^r)$ for $k\in \mathbb{N}$. 
We also set for a matter of convenience, the inner product in $\mathcal{M}
_{\Omega,\Gamma}^1,$ as follows
\begin{align*}
& \left\langle \binom{\eta_1}{\xi_1},\binom{\eta_2}{\xi_2}\right\rangle_{\mathcal{M}
_{\Omega,\Gamma}^1} \\
& =\omega \int_0^\infty \mu_\Omega(s)\left( \langle \nabla\eta_1(s),\nabla \eta_2(s)\rangle_{L^2(\Omega)} + \alpha \langle \eta _1(s),\eta_2(s) \rangle_{L^2(\Omega)} \right) {\rm d}s \\
& + \nu \int_0^\infty \mu_\Gamma(s) \left( \langle \nabla_\Gamma\xi _1(s), \nabla_\Gamma\xi_2(s)\rangle_{L^2(\Gamma)} + \beta \langle \xi_1(s),\xi _2(s) \rangle_{L^2(\Gamma)} \right) {\rm d}s.
\end{align*}
When it is convenient, we  also use the notation
\begin{equation}
\mathcal{H}_{\Omega,\Gamma}^{0,1} := \mathbb{X}^2\times \mathcal{M}_{\Omega,\Gamma}^1, \quad \mathcal{H}_{\Omega,\Gamma}^{s,r} := \mathbb{V}^s\times \mathcal{M}_{\Omega,\Gamma}^r \quad \text{for}\ s,r\geq 1.  \notag
\end{equation}
Each space is equipped with the corresponding ``graph norm,'' whose square is defined by, for all $(U,\Phi)\in\mathcal{H}^{i,i+1}_{\Omega,\Gamma}$, $i=0,1,$ 
\begin{equation*}
\left\|(U,\Phi)\right\|^2_{\mathcal{H}^{0,1}_{\Omega,\Gamma}}:=\left\|U\right\|^2_{\mathbb{X}^2}+\left\|\Phi\right\|^2_{\mathcal{M}^1_{\Omega,\Gamma}} \quad \text{and} \quad \left\|(U,\Phi)\right\|^2_{\mathcal{H}^{1,2}_{\Omega,\Gamma}}:=\left\|U\right\|^2_{\mathbb{V}^1}+\left\|\Phi\right\|^2_{\mathcal{M}^2_{\Omega,\Gamma}}.
\end{equation*}

For the kernels $\mu_S$, $S\in\{\Omega,\Gamma\}$, we take the following
assumptions (cf. e.g. \cite{CPS06,GPM98,GPM00}). 
Assume 
\begin{align}
& \mu_S\in C^1(\mathbb{R}_+)\cap L^1(\mathbb{R}_+),  \label{mu-1} \\
& \mu_S(s) \ge 0 \quad\forall s\ge 0,  \label{mu-2} \\
& \mu'_S(s) \le 0 \quad\forall s\ge 0,  \label{mu-3} \\
& \mu'_S(s) + \delta\mu_S(s) \le 0 \quad\forall s\ge 0\ \text{and some}\ \delta>0.  \label{mu-4}
\end{align}
The assumptions (\ref{mu-1})-(\ref{mu-3}) are equivalent to assuming $k_S(s)$ be bounded, positive, nonincreasing, convex functions of class $\mathcal{C}^2$.
Moreover, assumption (\ref{mu-4}) guarantees exponential decay of the functions $\mu_S(s)$ while allowing a singularity at $s=0$. 
Assumptions (\ref{mu-1})-(\ref{mu-3}) are used in the literature (see for example \cite{CDGP-2010,CPS06,GPM98,Grasselli&Pata02-2}) to establish the existence and uniqueness of continuous global weak solutions to a system of equations similar to (\ref{problemp-1}), (\ref{problemp-3}), but with Dirichlet boundary conditions. 
In the literature (as well as here), assumption (\ref{mu-4}) is used to obtain a bounded absorbing set for the associated semigroup of solution operators.

Moving on, define 
\begin{equation*} \label{memory-4}
{\rm D(T_r)}=\left\{ \Phi\in\mathcal{M}^1_{\Omega,\Gamma}:\partial_s\Phi\in\mathcal{M}^1_{\Omega,\Gamma}, \Phi(0)=0 \right\}
\end{equation*}
where (with an abuse of notation) $\partial_s\Phi$ is the distributional derivative of $\Phi$ and the equality $\Phi(0)=0$ is meant in the following sense
\begin{equation*}
\lim_{s\rightarrow0}\|\Phi(s)\|_{\mathbb{X}^2} = 0.
\end{equation*}
Then define the linear (unbounded) operator ${\rm T_r}: {\rm D(T_r)}\rightarrow \mathcal{M}^1_{\Omega,\Gamma}$ by, for all $\Phi\in {\rm D(T_r)}$, 
\begin{equation*}
{\rm T_r}\Phi=-\frac{d}{ds}\Phi.
\end{equation*}
For each $t\in[0,T]$, the equation 
\begin{equation}  \label{memory-2}
\partial_t\Phi^t = {\rm T_r}\Phi^t + U(t) 
\end{equation}
holds as an ODE in $\mathcal{M}^1_{\Omega,\Gamma}$ subject to the initial condition
\begin{equation} \label{memory-3}
\Phi^0=\Phi_0\in\mathcal{M}^1_{\Omega,\Gamma}.
\end{equation}
The following proposition is a well-known concerning the solution to the IVP (\ref{memory-2})-(\ref{memory-3}) (this is a generalization of \cite[Theorem 3.1]{Grasselli&Pata02-2}).

\begin{proposition} \label{t:generator-T}
The operator ${\rm T_r}$ with domain ${\rm D(T_r)}$ is an infinitesimal generator of a strongly continuous semigroup of contractions on $\mathcal{M}^1_{\Omega,\Gamma}$, denoted $e^{{\rm T_r}t}$.
\end{proposition}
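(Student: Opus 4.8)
The plan is to apply the Lumer--Phillips theorem: since $\mathcal{M}^1_{\Omega,\Gamma}$ is a Hilbert space, it suffices to show that ${\rm T_r}$ is densely defined and dissipative and that $\lambda\,{\rm Id}-{\rm T_r}$ maps ${\rm D(T_r)}$ onto $\mathcal{M}^1_{\Omega,\Gamma}$ for some $\lambda>0$. Density is immediate: the collection of $\mathbb{V}^1$-valued functions that are smooth, compactly supported in the open half-line $(0,\infty)$, and identically zero near $s=0$ lies in ${\rm D(T_r)}$ and is dense in the weighted space $\mathcal{M}^1_{\Omega,\Gamma}$, because by (\ref{mu-1}) the kernels $\mu_\Omega,\mu_\Gamma$ are finite and integrable on $\mathbb{R}_+$.

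For dissipativity I would test ${\rm T_r}\Phi=-\partial_s\Phi$ against $\Phi$ in the inner product of $\mathcal{M}^1_{\Omega,\Gamma}$. Working factor by factor, for a single weighted summand $L^2_{\mu_S}(\mathbb{R}_+;W)$ with $\phi$ and $\partial_s\phi$ in the space and $\phi(0)=0$, an integration by parts in $s$ gives
\[
\int_0^\infty \mu_S(s)\,\langle -\partial_s\phi(s),\phi(s)\rangle_W\,{\rm d}s = \frac12\int_0^\infty \mu'_S(s)\,\|\phi(s)\|_W^2\,{\rm d}s,
\]
where the boundary contribution vanishes thanks to $\phi(0)=0$ and to the decay $\mu_S(s)\|\phi(s)\|_W^2\to0$ as $s\to\infty$ (a consequence of (\ref{mu-1}) together with membership in the weighted space). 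Since $\mu'_S\le0$ by (\ref{mu-3}), the right-hand side is nonpositive; summing the $\Omega$-factor (with $W$ the instantaneous $H^1(\Omega)$-type norm weighted by $\mu_\Omega$) and the $\Gamma$-factor (weighted by $\mu_\Gamma$) yields $\langle {\rm T_r}\Phi,\Phi\rangle_{\mathcal{M}^1_{\Omega,\Gamma}}\le0$ for every $\Phi\in{\rm D(T_r)}$, so ${\rm T_r}$ is dissipative.

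For the range condition, fix $\lambda>0$ and $\Psi\in\mathcal{M}^1_{\Omega,\Gamma}$ and solve the first-order linear ODE $\lambda\Phi+\partial_s\Phi=\Psi$ subject to $\Phi(0)=0$; variation of parameters gives the explicit candidate
\[
\Phi(s)=\int_0^s e^{-\lambda(s-\sigma)}\,\Psi(\sigma)\,{\rm d}\sigma.
\]
It remains to verify that this $\Phi$ actually belongs to ${\rm D(T_r)}$, i.e.\ that $\Phi\in\mathcal{M}^1_{\Omega,\Gamma}$, that $\partial_s\Phi=\Psi-\lambda\Phi\in\mathcal{M}^1_{\Omega,\Gamma}$, and that $\lim_{s\to0}\|\Phi(s)\|_{\mathbb{X}^2}=0$; the accompanying bound $\|\Phi\|_{\mathcal{M}^1_{\Omega,\Gamma}}\le\lambda^{-1}\|\Psi\|_{\mathcal{M}^1_{\Omega,\Gamma}}$ then follows. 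I expect this surjectivity step to be the main obstacle: because $k_\Omega\ne k_\Gamma$ the weights $\mu_\Omega,\mu_\Gamma$ must be handled independently, and one has to control the convolution above in the weighted norm despite a possible singularity of $\mu_S$ at the origin. This is exactly where the exponential-decay assumption (\ref{mu-4}) enters, via a Young-type estimate for the convolution against $e^{-\lambda s}$, and it is the place where the argument departs from the coinciding-kernels case. Granting these verifications, the Lumer--Phillips theorem yields that ${\rm T_r}$ generates a strongly continuous semigroup of contractions $e^{{\rm T_r}t}$; as a consistency check, this semigroup is the right-translation semigroup $(e^{{\rm T_r}t}\Phi)(s)=\Phi(s-t)$ for $s>t$ and $(e^{{\rm T_r}t}\Phi)(s)=0$ for $0\le s\le t$, which is visibly a contraction on $\mathcal{M}^1_{\Omega,\Gamma}$ since $\mu_\Omega,\mu_\Gamma$ are nonincreasing. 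The whole argument is a direct adaptation of \cite[Theorem 3.1]{Grasselli&Pata02-2} to the product weight $\mu_\Omega\oplus\mu_\Gamma$ and the boundary space $H^1(\Gamma)$.
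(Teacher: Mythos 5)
First, a point of reference: the paper does not prove this proposition at all---it is stated as ``well-known'' and attributed to a generalization of \cite[Theorem 3.1]{Grasselli&Pata02-2}. So your Lumer--Phillips argument is not competing with an in-paper proof but with the cited literature, where the standard route is actually the one you relegate to a ``consistency check'': one writes down the right-translation semigroup $(R(t)\Phi)(s)=\Phi(s-t)$ for $s>t$ and $0$ otherwise, verifies the contraction property directly from the change of variables $\|R(t)\Phi\|^2_{\mathcal{M}^1_{\Omega,\Gamma}}=\int_0^\infty\mu_S(\sigma+t)\|\Phi(\sigma)\|^2\,{\rm d}\sigma\le\|\Phi\|^2_{\mathcal{M}^1_{\Omega,\Gamma}}$ using only the monotonicity (\ref{mu-3}), checks strong continuity, and identifies the generator with ${\rm T_r}$ on ${\rm D(T_r)}$. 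That route sidesteps both of the delicate points in your argument, and it passes to the product weight $\mu_\Omega\oplus\mu_\Gamma$ with no change because the space is a direct sum handled factor by factor; contrary to your closing remarks, nothing in this proposition is sensitive to $k_\Omega\ne k_\Gamma$.

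Two steps in your version are asserted rather than proved. (i) In the dissipativity computation the boundary term at $s=0$ is $\tfrac12\mu_S(a)\|\phi(a)\|^2_W$ with $W$ an $H^1$-type space, and it enters with the \emph{wrong} sign (it is nonnegative). The domain condition $\Phi(0)=0$ only gives $\|\Phi(s)\|_{\mathbb{X}^2}\to0$, not $\|\Phi(s)\|_{\mathbb{V}^1}\to0$, and $\mu_S$ is allowed to be singular at $s=0$, so the product $\mu_S(a)\|\phi(a)\|_W^2$ is an indeterminate form; showing that its limit does not spoil the sign is precisely the technical content of the cited proofs (and of Corollary \ref{t:operator-T-1}), and it deserves more than one clause. (ii) For the range condition your variation-of-parameters candidate is right, but the weighted estimate does not need (\ref{mu-4}): since $\mu_S$ is nonincreasing one has $\mu_S(s)^{1/2}\|\Phi(s)\|_W\le\int_0^s e^{-\lambda(s-\sigma)}\mu_S(\sigma)^{1/2}\|\Psi(\sigma)\|_W\,{\rm d}\sigma$, and Young's convolution inequality gives $\|\Phi\|_{\mathcal{M}^1_{\Omega,\Gamma}}\le\lambda^{-1}\|\Psi\|_{\mathcal{M}^1_{\Omega,\Gamma}}$ from (\ref{mu-3}) alone; invoking (\ref{mu-4}) here misattributes where that hypothesis is used in the paper (it is needed for the decay rate in (\ref{operator-T-1}) and the absorbing set, not for generation). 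With these two points repaired---or, more economically, replaced by the direct construction of the translation semigroup---your argument is a correct proof of the proposition.
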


We now have (cf. e.g. \cite[Corollary IV.2.2]{Pazy83}).

\begin{corollary} \label{t:memory-regularity-1}
When $U\in L^1([0,T];\mathbb{V}^1)$ for each $T>0$, then, for every $\Phi_0\in\mathcal{M}^1_{\Omega,\Gamma}$, the Cauchy problem 
\begin{equation*} \label{memory-1}
\left\{ \begin{array}{ll}
\partial_t\Phi^t={\rm T_r}\Phi^t+U(t), & \text{for}\ t>0, \\ 
\Phi^0=\Phi_0, & 
\end{array} \right. 
\end{equation*}
has a unique solution $\Phi\in C([0,T];\mathcal{M}^1_{\Omega,\Gamma})$ which can be explicitly given as (cf. \cite[Section 3.2]{CPS06} and \cite[Section 3]{Grasselli&Pata02-2}) 
\begin{equation*}  \label{representation-formula-1}
\Phi^t(s)=\left\{ 
\begin{array}{ll}
\displaystyle\int_0^s U(t-y) dy, & \text{for}\ 0<s\leq t, \\ 
\displaystyle\Phi_0(s-t) + \int_0^t U(t-y) dy, & \text{when}\ s>t.
\end{array}
\right.
\end{equation*}
\end{corollary}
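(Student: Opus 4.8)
The plan is to deduce this corollary from Proposition \ref{t:generator-T} by invoking the variation-of-constants formula for the inhomogeneous abstract Cauchy problem and then identifying the resulting mild solution with the stated representation formula through an explicit description of the semigroup $e^{{\rm T_r}t}$. Since $\Phi_0$ is only assumed to lie in $\mathcal{M}^1_{\Omega,\Gamma}$ (not in ${\rm D(T_r)}$), the natural notion is that of a mild solution, and the cited \cite[Corollary IV.2.2]{Pazy83} is precisely the tool that delivers existence, uniqueness and continuity in one stroke.

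First I would make sense of the forcing term. Although $U(t)$ takes values in $\mathbb{V}^1$, it enters (\ref{memory-2}) as the constant-in-$s$ function $s\mapsto U(t)$. Because $\mu_S\in L^1(\mathbb{R}_+)$ by (\ref{mu-1}), the assignment $v\mapsto(s\mapsto v)$ embeds $\mathbb{V}^1$ continuously into $\mathcal{M}^1_{\Omega,\Gamma}$, so that
\[
\|(s\mapsto v)\|_{\mathcal{M}^1_{\Omega,\Gamma}} \le C\|v\|_{\mathbb{V}^1},
\]
with $C>0$ depending only on $\omega$, $\nu$, and $\|\mu_\Omega\|_{L^1(\mathbb{R}_+)}$, $\|\mu_\Gamma\|_{L^1(\mathbb{R}_+)}$. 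Consequently $U\in L^1([0,T];\mathbb{V}^1)$ implies $U\in L^1([0,T];\mathcal{M}^1_{\Omega,\Gamma})$, which is the hypothesis required by the general theory. By Proposition \ref{t:generator-T} the operator ${\rm T_r}$ generates a $C_0$-semigroup of contractions, so for every $\Phi_0\in\mathcal{M}^1_{\Omega,\Gamma}$ there is a unique mild solution $\Phi\in C([0,T];\mathcal{M}^1_{\Omega,\Gamma})$ given by the Duhamel formula
\[
\Phi^t = e^{{\rm T_r}t}\Phi_0 + \int_0^t e^{{\rm T_r}(t-\tau)}U(\tau)\,{\rm d}\tau .
\]
This already yields the existence, uniqueness and the asserted continuity; it remains to convert the abstract formula into the explicit one.

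Next I would record the explicit action of the semigroup. Since ${\rm T_r}=-\tfrac{d}{ds}$ with the boundary condition $\Phi(0)=0$ built into ${\rm D(T_r)}$, the homogeneous equation $\partial_t\Phi={\rm T_r}\Phi$ is the transport equation $\partial_t\Phi+\partial_s\Phi=0$ on $\mathbb{R}_+$ with zero influx at $s=0$; solving along characteristics produces the right-translation semigroup
\[
(e^{{\rm T_r}t}\Psi)(s)=\begin{cases} 0, & 0<s\le t,\\ \Psi(s-t), & s>t,\end{cases}
\]
which is exactly the description underlying the proof of Proposition \ref{t:generator-T}. Applying this to the constant-in-$s$ function $U(\tau)$ gives $(e^{{\rm T_r}(t-\tau)}U(\tau))(s)=U(\tau)\mathbf{1}_{\{s>t-\tau\}}$. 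Substituting both pieces into the Duhamel formula and evaluating pointwise in $s$, splitting into the cases $0<s\le t$ and $s>t$ and performing the change of variable $y=t-\tau$ in the integral, reproduces precisely the claimed representation formula.

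The main obstacle is the bookkeeping in this last step: one must justify exchanging the Bochner integral in $\tau$ with the pointwise-in-$s$ evaluation of the translation semigroup, and confirm the resulting object genuinely lies in $C([0,T];\mathcal{M}^1_{\Omega,\Gamma})$ even though $U$ is merely $L^1$ in time and the weight $\mu_S$ may be singular at $s=0$. Both points are controlled by the $L^1$-in-time integrability of $U$ together with $\mu_S\in L^1(\mathbb{R}_+)$ via dominated convergence, so no genuine analytic difficulty arises beyond a careful case analysis; the continuity is in any case guaranteed abstractly by \cite[Corollary IV.2.2]{Pazy83}, and the explicit formula serves only to exhibit the solution in the form used in the sequel.
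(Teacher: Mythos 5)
Your proposal is correct and follows exactly the route the paper itself indicates: the paper offers no written proof of this corollary, deferring instead to \cite[Corollary IV.2.2]{Pazy83} for the unique mild solution via the Duhamel formula and to \cite[Section 3.2]{CPS06}, \cite[Section 3]{Grasselli&Pata02-2} for the identification of $e^{{\rm T_r}t}$ as the right-translation semigroup with zero fill-in, which is precisely what you carry out. Your case analysis ($0<s\le t$ versus $s>t$) and the change of variable $y=t-\tau$ reproduce the stated representation formula correctly, so nothing further is needed.
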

The interested reader can also see \cite[Section 3]{CPS06}, \cite[pp. 346--347]{GPM98} and \cite[Section 3]{Grasselli&Pata02-2} for more details concerning the above corollary in the case of static boundary conditions.
Furthermore, we also know that ${\rm T_r}$ is the infinitesimal generator of a strongly continuous (the right-translation) semigroup of contractions on $\mathcal{M}^1_{\Omega,\Gamma}$ satisfying (\ref{operator-T-1}) below; in particular, ${\rm Range}({\rm I}-{\rm T_r})=\mathcal{M}^1_{\Omega,\Gamma}$.
Following (\ref{mu-3}), there is the useful inequality.
(Also see \cite[see equation (3.4)]{CPS06} and \cite[Section 3, proof of Theorem]{Grasselli&Pata02-2}.)

\begin{corollary} \label{t:operator-T-1} 
There holds, for all $\Phi\in {\rm D(T_r)}$, 
\begin{equation} \label{operator-T-1}
\langle {\rm T_r}\Phi,\Phi \rangle_{\mathcal{M}^1_{\Omega,\Gamma}} \leq -\frac{\delta}{2}\|\Phi\|^2_{\mathcal{M}^1_{\Omega,\Gamma}}.
\end{equation}
\end{corollary}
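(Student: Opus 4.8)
The plan is to reduce the inequality to the classical dissipativity computation for the right--translation generator on a weighted memory space, carried out separately on the interior and boundary components. Writing $\Phi=(\eta,\xi)^{\rm tr}$ and recalling that ${\rm T_r}\Phi=(-\partial_s\eta,-\partial_s\xi)^{\rm tr}$ acts componentwise, I observe that the inner product of $\mathcal{M}^1_{\Omega,\Gamma}$ decouples into an interior piece weighted by $\mu_\Omega$ and a boundary piece weighted by $\mu_\Gamma$, each of which is a weighted $L^2(\mathbb{R}_+;W)$ pairing for an appropriate Hilbert space $W$. Since both $\mu_\Omega$ and $\mu_\Gamma$ satisfy (\ref{mu-1})--(\ref{mu-4}) (with a common $\delta$, or the minimum of the two decay rates), it suffices to handle one generic component and then sum. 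Introducing the energy densities $h_\Omega(s):=\|\nabla\eta(s)\|^2_{L^2(\Omega)}+\alpha\|\eta(s)\|^2_{L^2(\Omega)}$ and $h_\Gamma(s):=\|\nabla_\Gamma\xi(s)\|^2_{L^2(\Gamma)}+\beta\|\xi(s)\|^2_{L^2(\Gamma)}$, the crucial point is that the integrand in each component is exactly $\tfrac{1}{2}\mu_S(s)\,h_S'(s)$.

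The central computation is an integration by parts in $s$. Carrying it out on the interior term gives
\begin{equation*}
-\frac{\omega}{2}\int_0^\infty \mu_\Omega(s)\,h_\Omega'(s)\,{\rm d}s = -\frac{\omega}{2}\Big[\mu_\Omega(s)\,h_\Omega(s)\Big]_0^\infty + \frac{\omega}{2}\int_0^\infty \mu_\Omega'(s)\,h_\Omega(s)\,{\rm d}s,
\end{equation*}
and identically for the boundary term with $\omega$ replaced by $\nu$ and $\Omega$ by $\Gamma$. The boundary contribution at $s=0$ vanishes because $\Phi\in{\rm D(T_r)}$ forces $\Phi(0)=0$, while the contribution at $s=\infty$ vanishes from the exponential decay of $\mu_S$ (a consequence of (\ref{mu-4})) together with the finiteness of $\|\Phi\|_{\mathcal{M}^1_{\Omega,\Gamma}}$. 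Invoking assumption (\ref{mu-4}) in the form $\mu_S'(s)\le-\delta\mu_S(s)$ and using that $h_\Omega,h_\Gamma\ge0$ then yields
\begin{equation*}
\frac{\omega}{2}\int_0^\infty \mu_\Omega'(s)\,h_\Omega(s)\,{\rm d}s \le -\frac{\delta\omega}{2}\int_0^\infty \mu_\Omega(s)\,h_\Omega(s)\,{\rm d}s,
\end{equation*}
and similarly for the boundary piece. Summing the two estimates reconstitutes $-\tfrac{\delta}{2}\|\Phi\|^2_{\mathcal{M}^1_{\Omega,\Gamma}}$, which is precisely (\ref{operator-T-1}).

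The main obstacle is justifying the integration by parts rigorously for an arbitrary $\Phi\in{\rm D(T_r)}$, since the assumptions permit $\mu_S$ to be singular at $s=0$; thus the product $\mu_S(s)h_S(s)$ near the origin must be controlled carefully and the differentiation under the integral sign is not immediate. I would resolve this by a density argument: the identity and the resulting inequality are transparent for $\Phi$ in the dense subset of $\mathcal{M}^1_{\Omega,\Gamma}$ consisting of smooth functions compactly supported in $(0,\infty)$, for which all boundary terms are trivially absent and $h_S'$ is classical. Because ${\rm T_r}$ is the infinitesimal generator of a strongly continuous semigroup of contractions (Proposition \ref{t:generator-T}), the bilinear form $\Phi\mapsto\langle{\rm T_r}\Phi,\Phi\rangle_{\mathcal{M}^1_{\Omega,\Gamma}}$ is closable and the quantity $\|\Phi\|^2_{\mathcal{M}^1_{\Omega,\Gamma}}$ is continuous, so the inequality established on the dense subset passes to the closure and hence holds for every $\Phi\in{\rm D(T_r)}$. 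This mirrors the estimate in \cite[equation (3.4)]{CPS06} and the proof in \cite[Section 3]{Grasselli&Pata02-2}, the only new bookkeeping being the simultaneous treatment of the two distinct kernels $\mu_\Omega$ and $\mu_\Gamma$.
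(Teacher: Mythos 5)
Your proof is correct and takes essentially the same route as the paper, which offers no independent argument for this corollary but defers to the classical integration-by-parts computation of \cite[equation (3.4)]{CPS06} and \cite[Section 3]{Grasselli&Pata02-2}; your componentwise treatment of the $\mu_\Omega$ and $\mu_\Gamma$ pieces, the use of (\ref{mu-4}), and the sign analysis of the boundary terms reproduce exactly that standard argument. The only point worth tightening is the density step: what is needed is that functions vanishing near $s=0$ form a core for ${\rm T_r}$ (density in ${\rm D(T_r)}$ for the graph norm, not merely density in $\mathcal{M}^1_{\Omega,\Gamma}$), after which both sides of (\ref{operator-T-1}) pass to the limit along a graph-norm approximating sequence.
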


A word of caution: even though the embedding $\mathbb{V}^1\hookrightarrow\mathbb{X}^2$ is compact, it does not follow that the embedding $\mathcal{M}^1_{\Omega,\Gamma}\hookrightarrow \mathcal{M}^0_{\Omega,\Gamma}$ is also compact. 
Indeed, see \cite{Pata-Zucchi-2001} for a counterexample.
Moreover, this means the embedding $\mathcal{H}^{1,1}_{\Omega,\Gamma}\hookrightarrow\mathcal{H}^{0,0}_{\Omega,\Gamma}$ is not compact.
Such compactness between the ``natural phase spaces'' is essential to the construction of finite dimensional exponential attractors. 
However, we treat this lack of compactness issue by following \cite{CPS06,GMPZ10} and define the so-called ``tail spaces,''
\begin{equation*}
\mathcal{K}^r_{\Omega,\Gamma} := \left\{ \Phi\in\mathcal{M}^r_{\Omega,\Gamma} : \partial_s\Phi\in\mathcal{M}^{0}_{\Omega,\Gamma},\ \Phi(0)=0,\ \sup_{\tau\ge1} \tau\mathbb{T}(\tau;\Phi)<\infty \right\} \quad \text{for}\ r\ge1,
\end{equation*}
where $\mathbb{T}(\tau;\Phi)$ is the {\em{tail function}} of $\Phi=(\eta,\xi)^{\rm tr}$ given by, for all $\tau\ge0,$ 
\begin{equation*}
\mathbb{T}(\tau;\Phi) := \int\limits_{(0,1/\tau)\cup(\tau,\infty)} \left( \mu_\Omega(s) \|\eta(s)\|^2_{L^2(\Omega)} + \mu_\Gamma \|\xi(s)\|^2_{L^2(\Gamma)} \right) {\rm d}s,
\end{equation*}
The space $\mathcal{K}^r_{\Omega,\Gamma}$ is Banach with the norm whose square is defined by
\begin{equation*}
\|\Phi\|^2_{\mathcal{K}^r_{\Omega,\Gamma}} := \|\Phi\|^2_{\mathcal{M}^r_{\Omega,\Gamma}} + \|\partial_s\Phi\|^2_{\mathcal{M}^1_{\Omega,\Gamma}} + \sup_{\tau\ge1} \tau\mathbb{T}(\tau;\Phi).  \label{new-norm}
\end{equation*}
Importantly, the embedding $\mathcal{K}^r_{\Omega,\Gamma}\hookrightarrow\mathcal{M}^{r-1}_{\Omega,\Gamma}$ is compact for any $r\ge1$ (cf. \cite[Proposition 5.4]{GMPZ10}).
Hence, let us now also define the spaces
\begin{align}
\widehat{\mathcal{H}}^{0,1}_{\Omega,\Gamma} & := \mathbb{X}^2\times\mathcal{K}^1_{\Omega,\Gamma}, \quad \widehat{\mathcal{H}}_{\Omega,\Gamma}^{s,r} := \mathbb{V}^s\times \mathcal{K}_{\Omega,\Gamma}^r \quad \text{for}\ s,r\geq 1.  \notag
\end{align}
With these spaces the desired compact embedding $\widehat{\mathcal{H}}^{1,1}_{\Omega,\Gamma} \hookrightarrow \widehat{\mathcal{H}}^{0,0}_{\Omega,\Gamma}$ holds.
Again, each space is equipped with the corresponding graph norm whose square is defined by, for all $\varepsilon\in[0,1]$ and $(U,\Phi)\in\widehat{\mathcal{H}}^{0,1}_{\Omega,\Gamma}$,
\begin{equation*}
\left\|(U,\Phi)\right\|^2_{\widehat{\mathcal{H}}^{0,1}_{\Omega,\Gamma}}:=\left\|U\right\|^2_{\mathbb{X}^2}+\left\|\Phi\right\|^2_{\mathcal{K}^1_{\Omega,\Gamma}}.
\end{equation*}
Concerning the framework for Problem {\textbf P}, an important target space that we use to apply these desired compactness properties in is the weak topology of the weak energy phase space $\mathcal{H}^{0,1}_{\Omega,\Gamma}$ which is given by 
\begin{equation}  \label{spaces}
\mathcal{H}^{-1,0}_{\Omega,\Gamma} := \mathbb{V}^{-1}\times\mathcal{M}^{0}_{\Omega,\Gamma},
\end{equation}
endowed with the canonical norm.
There holds 
\begin{equation*}  
\widehat{\mathcal{H}}^{0,1}_{\Omega,\Gamma}=\left(\mathbb{X}^2\times\mathcal{K}^1_{\Omega,\Gamma}\right) \hookrightarrow \mathcal{H}^{-1,0}_{\Omega,\Gamma},
\end{equation*}
with continuous and compact injection (indeed, the embedding $\mathbb{V}^1\hookrightarrow\mathbb{X}^2$ is compact, and recall from above the embedding $\mathcal{K}^1_{\Omega,\Gamma}\hookrightarrow\mathcal{M}^0_{\Omega,\Gamma}$ is compact).
The space $\mathcal{H}^{-1,0}_{\Omega,\Gamma}$ is the space the compact exponential attractors reside in.

The following results are \cite[Lemmas 3.3, 3.4, and 3.6]{CPS06}. 
Each presented here is an adaption to suit the framework in this article.

\begin{lemma}  \label{what-1}
Let $\Phi_0\in {\rm D(T_r)}$.
Assume there is $K>0$ such that, for all $t\ge0$, $\|U(t)\|_{\mathbb{V}^1}\le K$.
Then for all $t\ge0$,
\begin{align}
\|{\rm T_r}\Phi^t\|^2_{\mathcal{M}^1_{\Omega,\Gamma}} \le e^{-\min\{\delta_\Omega,\delta_\Gamma\} t}\|{\rm T_r}\Phi_0\|^2_{\mathcal{M}^1_{\Omega,\Gamma}} + K^2 \left( \|\mu_\Omega\|_{L^1(\mathbb{R}_+)}+\|\mu_\Gamma\|_{L^1(\mathbb{R}_+)} \right).  \notag
\end{align}
\end{lemma}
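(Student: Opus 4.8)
The plan is to avoid an energy estimate based on Corollary \ref{t:operator-T-1} and instead work directly from the explicit representation formula in Corollary \ref{t:memory-regularity-1}. The reason is that $\Psi^t := {\rm T_r}\Phi^t = -\partial_s\Phi^t$ does \emph{not} lie in ${\rm D(T_r)}$ in general: differentiating the representation formula at $s=0^+$ gives $\Psi^t(0)=-U(t)\neq0$, so an integration by parts in $s$ would leave a boundary term of the form $\tfrac12\mu_S(0)\|U(t)\|^2$, which is absent from the claimed bound and possibly infinite because assumption (\ref{mu-4}) permits $\mu_S$ to be singular at $s=0$. Working from the representation formula sidesteps this difficulty entirely, and it is also the route taken in \cite{CPS06}. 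Throughout, $\delta_S$ denotes the decay constant furnished by (\ref{mu-4}) for the kernel $\mu_S$.

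First I would differentiate the representation formula in $s$, componentwise since both $\Phi^t=(\eta^t,\xi^t)^{\rm tr}$ and $U$ split into an $\Omega$-part and a $\Gamma$-part, to obtain the explicit form
\begin{equation*}
({\rm T_r}\Phi^t)(s) = \begin{cases} -\,U(t-s), & 0<s\le t, \\[2pt] ({\rm T_r}\Phi_0)(s-t), & s>t. \end{cases}
\end{equation*}
This identity is valid a.e.\ in $s$ because $U\in L^1([0,T];\mathbb{V}^1)$ makes $\partial_s\Phi^t(s)=U(t-s)$ on the near range, while $\Phi_0\in{\rm D(T_r)}$ guarantees $\partial_s\Phi_0\in\mathcal{M}^1_{\Omega,\Gamma}$ on the far range, with the condition $\Phi_0(0)=0$ ensuring compatibility at the junction $s=t$.

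Next I would insert this expression into the norm of $\mathcal{M}^1_{\Omega,\Gamma}$, which decomposes as the sum of an $\Omega$-integral weighted by $\mu_\Omega$ and a $\Gamma$-integral weighted by $\mu_\Gamma$, and split each $s$-integral at $s=t$. On the near range $0<s\le t$ the integrand equals $\|U(t-s)\|^2$ in the relevant component seminorm, which is bounded by $K^2$ by hypothesis; integrating the kernel and accounting for the fixed structural weights $\omega,\nu$ then shows that the near ranges contribute at most $K^2\|\mu_\Omega\|_{L^1(\mathbb{R}_+)}$ and $K^2\|\mu_\Gamma\|_{L^1(\mathbb{R}_+)}$, respectively. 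On the far range $s>t$ I would substitute $\sigma=s-t$ and exploit the exponential decay built into (\ref{mu-4}): since $s\mapsto\mu_S(s)e^{\delta_S s}$ is nonincreasing, one has $\mu_S(\sigma+t)\le e^{-\delta_S t}\mu_S(\sigma)$, which converts the far integral into $e^{-\delta_S t}\|{\rm T_r}\Phi_0\|^2_{S\text{-part}}$.

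Finally I would recombine the two components: bounding $e^{-\delta_\Omega t}$ and $e^{-\delta_\Gamma t}$ by their common majorant $e^{-\min\{\delta_\Omega,\delta_\Gamma\}t}$ collects the far contributions into $e^{-\min\{\delta_\Omega,\delta_\Gamma\}t}\|{\rm T_r}\Phi_0\|^2_{\mathcal{M}^1_{\Omega,\Gamma}}$, while the near contributions add to $K^2\big(\|\mu_\Omega\|_{L^1(\mathbb{R}_+)}+\|\mu_\Gamma\|_{L^1(\mathbb{R}_+)}\big)$, yielding precisely the stated inequality. The only genuinely delicate points are the a.e.\ differentiation of the representation formula and the treatment of the junction $s=t$; everything after that is bookkeeping, and the essential mechanism is the decay estimate $\mu_S(\sigma+t)\le e^{-\delta_S t}\mu_S(\sigma)$ extracted from (\ref{mu-4}).
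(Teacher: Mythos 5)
Your proof is correct and is essentially the argument the paper relies on: the lemma is stated as an adaptation of \cite[Lemma 3.3]{CPS06}, whose proof proceeds exactly as you do, by differentiating the representation formula of Corollary \ref{t:memory-regularity-1} in $s$, splitting the $\mathcal{M}^1_{\Omega,\Gamma}$-integral at $s=t$, bounding the near range by $K^2\|\mu_S\|_{L^1(\mathbb{R}_+)}$ and the far range via $\mu_S(\sigma+t)\le e^{-\delta_S t}\mu_S(\sigma)$ from (\ref{mu-4}). Your preliminary remark explaining why the energy estimate through Corollary \ref{t:operator-T-1} fails (the boundary term $\tfrac12\mu_S(0)\|U(t)\|^2$ arising because ${\rm T_r}\Phi^t\notin{\rm D(T_r)}$) is also accurate and correctly motivates the choice of method.
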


\begin{lemma}  \label{what-2}
Let $\Phi_0\in {\rm D(T_r)}$.
Assume there is $K>0$ such that, for all $t\ge0$, $\|U(t)\|_{\mathbb{V}^1}\le K$.
Then there is a constant $C>0$ such that, for all $t\ge0$,
\begin{align}
\sup_{\tau\ge1} \tau\mathbb{T}(\tau;\Phi^t) \le 2 \left( t+2 \right)e^{-\min\{\delta_\Omega,\delta_\Gamma\} t} \sup_{\tau\ge1} \tau\mathbb{T}(\tau;\Phi_0) + CK^2.  \notag
\end{align}
\end{lemma}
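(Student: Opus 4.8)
The plan is to argue directly from the explicit representation formula for $\Phi^t$ furnished by Corollary~\ref{t:memory-regularity-1}, following the template of \cite[Lemma 3.4]{CPS06} but tracking the two memory components simultaneously. Throughout I write $m(s):=\mu_\Omega(s)\|\eta^t(s)\|^2_{L^2(\Omega)}+\mu_\Gamma(s)\|\xi^t(s)\|^2_{L^2(\Gamma)}$ for the integrand of $\mathbb{T}(\cdot;\Phi^t)$ and $m_0(s):=\mu_\Omega(s)\|\eta_0(s)\|^2_{L^2(\Omega)}+\mu_\Gamma(s)\|\xi_0(s)\|^2_{L^2(\Gamma)}$ for that of $\mathbb{T}(\cdot;\Phi_0)$, with $\Phi_0=(\eta_0,\xi_0)^{\rm tr}$, and set $\delta_*:=\min\{\delta_\Omega,\delta_\Gamma\}$. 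Two elementary facts drive the whole estimate. First, since $\mathbb{V}^1\hookrightarrow\mathbb{X}^2$ continuously, the hypothesis $\|U(t)\|_{\mathbb{V}^1}\le K$ yields $\|U(t)\|_{\mathbb{X}^2}\le c_0K$ for all $t\ge0$. Second, assumption~(\ref{mu-4}) shows that $s\mapsto e^{\delta_S s}\mu_S(s)$ is nonincreasing, whence the pointwise decay
\[
\mu_S(s+t)\le e^{-\delta_S t}\mu_S(s)\le e^{-\delta_* t}\mu_S(s),\qquad S\in\{\Omega,\Gamma\},
\]
holds for all $s,t\ge0$; together with $\mu_S\in L^1(\mathbb{R}_+)$ this controls every far-field term. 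Fixing $\tau\ge1$ (so $1/\tau\le1\le\tau$), I split $\tau\mathbb{T}(\tau;\Phi^t)$ over the near region $(0,1/\tau)$ and the far region $(\tau,\infty)$, and inside each region I insert the two branches of the representation formula according to whether $s\le t$ or $s>t$.

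For the contributions coming from $U$ I use that on $\{s\le t\}$ one has $\|\Phi^t(s)\|_{\mathbb{X}^2}\le\int_0^s\|U(t-y)\|_{\mathbb{X}^2}\,dy\le c_0Ks$, while on $\{s>t\}$ the $U$-part of $\Phi^t(s)$ is bounded by $c_0Kt$. In the near region the factor $s^2\le1/\tau^2$ (or $t^2<1/\tau^2$) combines with $\int_0^{1/\tau}\mu_S\le\|\mu_S\|_{L^1}$, so that after multiplying by $\tau$ the result is $\le CK^2/\tau\le CK^2$. In the far region the same branches produce $\tau\,(c_0K)^2\int_\tau^\infty\mu_S(s)s^2\,ds$ and $\tau\,(c_0Kt)^2\int_{\max(t,\tau)}^\infty\mu_S(s)\,ds$; using the exponential decay of $\mu_S$ these are dominated by $\sup_{r\ge0}r^3e^{-\delta_* r}<\infty$ (separating the cases $\tau\ge t$ and $\tau<t$), so all $U$-driven terms are absorbed into a single constant $CK^2$.

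For the contributions coming from $\Phi_0$, which appear only on $\{s>t\}$, I change variables $s'=s-t$ and invoke the pointwise decay above, together with $\|\Phi^t(s)\|^2\le 2\|\Phi_0(s-t)\|^2+2\|\int_0^tU\|^2$, to extract the factor $2e^{-\delta_* t}$ and leave integrals of $m_0$. The near-region piece lands on $(0,1/\tau-t)\subset(0,1/\tau)$ and is therefore $\le\mathbb{T}(\tau;\Phi_0)\le\frac1\tau\sup_{\sigma\ge1}\sigma\mathbb{T}(\sigma;\Phi_0)$, contributing $\le2e^{-\delta_* t}\sup_{\sigma}\sigma\mathbb{T}(\sigma;\Phi_0)$ after multiplying by $\tau$. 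The far-region piece becomes $2\tau e^{-\delta_* t}\int_{\max(0,\tau-t)}^\infty m_0(s')\,ds'$, and here I split on $\tau-t$: if $\tau-t\ge1$ then $\int_{\tau-t}^\infty m_0\le\mathbb{T}(\tau-t;\Phi_0)\le\frac{1}{\tau-t}\sup_\sigma\sigma\mathbb{T}(\sigma;\Phi_0)$ while $\frac{\tau}{\tau-t}=1+\frac{t}{\tau-t}\le1+t\le t+2$; if $\tau-t<1$ then $\tau\le t+2$ and, crucially, $\int_{\max(0,\tau-t)}^\infty m_0\le\int_0^\infty m_0=\mathbb{T}(1;\Phi_0)\le\sup_\sigma\sigma\mathbb{T}(\sigma;\Phi_0)$ because $(0,1)\cup(1,\infty)$ exhausts $\mathbb{R}_+$. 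Either way the $\Phi_0$-driven far term is $\le2(t+2)e^{-\delta_* t}\sup_\sigma\sigma\mathbb{T}(\sigma;\Phi_0)$. Collecting all pieces and taking the supremum over $\tau\ge1$ yields the claimed estimate.

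The delicate point is not any single bound but the bookkeeping that produces exactly the prefactor $2(t+2)e^{-\delta_* t}$. One must recognize that the innocent-looking $U$-terms carrying an explicit $t^2$ are tamed only because $\mu_S$ decays exponentially, which keeps $r^3e^{-\delta_* r}$ bounded; and one must handle the two regimes $\tau-t\ge1$ and $\tau-t<1$ by genuinely different devices — the quotient bound $\tau/(\tau-t)\le t+2$ in the first, and the identity $\mathbb{T}(1;\Phi_0)=\|\Phi_0\|^2_{\mathcal{M}^0_{\Omega,\Gamma}}$ in the second — in order to keep the coefficient of the initial tail uniform in $\tau$ and thereby survive the final supremum.
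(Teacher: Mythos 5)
Your argument is correct and is essentially the intended one: the paper does not prove Lemma \ref{what-2} at all, but imports it from \cite[Lemma 3.4]{CPS06}, and your reconstruction from the representation formula of Corollary \ref{t:memory-regularity-1} --- splitting near/far regions, splitting $s\le t$ versus $s>t$, using $\mu_S(s+t)\le e^{-\delta_S t}\mu_S(s)$ from (\ref{mu-4}) to shift the kernel onto $\Phi_0$, and taming the $U$-driven terms by the integrability of $s^3\mu_S(s)$ --- is exactly that template, correctly adapted to the two-component memory space by estimating each of $\mu_\Omega\|\eta\|^2$ and $\mu_\Gamma\|\xi\|^2$ separately before recombining. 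The only point to tidy is the final bookkeeping: as written you bound the far-region $\Phi_0$-term by $2(t+2)e^{-\delta_* t}\sup_\sigma\sigma\mathbb{T}(\sigma;\Phi_0)$ and must still add the near-region $\Phi_0$-term of size $2e^{-\delta_* t}\sup_\sigma\sigma\mathbb{T}(\sigma;\Phi_0)$, which would give $2(t+3)$; but your own intermediate estimates ($\tau/(\tau-t)\le 1+t$ when $\tau-t\ge 1$, and $\tau<t+1$ when $\tau-t<1$) in fact bound the far piece by $2(t+1)e^{-\delta_* t}\sup_\sigma\sigma\mathbb{T}(\sigma;\Phi_0)$, so keeping those sharper forms and only then adding the near piece recovers the stated coefficient $2(t+2)$ exactly. (In any case the precise polynomial prefactor is immaterial for the way the lemma is used in Lemma \ref{t:quasi-ball}.)
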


Next, we consider the linear (self-adjoint, positive) operator ${\rm B}\psi :={\rm B}_\beta\psi =-\Delta _\Gamma\psi +\beta \psi $ acting on ${\rm D(B)} =H^2(\Gamma) $. 
The basic (linear) operator associated with problem (\ref{eq1m})-(\ref{eq2m}) is the so-called ``Wentzell'' Laplace operator.
Recall that $\omega \in( 0,1) $. 
We let
\begin{align*}
{\rm A_W^{\alpha,\beta,\nu,\omega}}\binom{u_1}{u_2} & :=\left( 
\begin{array}{cc}
-\omega \Delta +\alpha \omega {\rm I} & 0 \\ 
\omega \partial_{\bf n}(\cdot)  & \nu {\rm B}
\end{array}
\right) \left( 
\begin{array}{c}
u_1 \\ 
u_2
\end{array}
\right)   \notag \\
& ={\rm A_W^{\alpha,0,0,\omega}}\binom{u_1}{u_2}+\binom{0}{\nu {\rm B}u_2},  \label{A_Wentzell1}
\end{align*}
with
\begin{equation*}
{\rm D(A_W^{\alpha,\beta,\nu,\omega})} :=\left\{ U=\binom{u_1}{u_2}\in \mathbb{Y}:-\Delta u_1\in L^2(\Omega),\ \omega \partial_{\bf n}u_1-\nu {\rm B}u_2\in L^2(\Gamma) \right\},  \label{A_Wentzell2}
\end{equation*}
where $\mathbb{Y}:=\mathbb{V}_0^1$ if $\nu =0,$ and $\mathbb{Y}:=\mathbb{V}^1$ if $\nu >0.$ 
It is well-known that $({\rm A_W^{\alpha,\beta,\nu,\omega}},{\rm D(A_W^{\alpha,\beta,\nu,\omega})})$ is self-adjoint and nonnegative operator on $\mathbb{X}^2$ whenever $\alpha,\beta,\nu\ge 0,$ and ${\rm A_W^{\alpha,\beta,\nu,\omega}}>0$ if either $\alpha>0$ or $\beta >0$. 
Moreover, the resolvent operator $({\rm I}+{\rm A_W^{\alpha,\beta,\nu,\omega}})^{-1}\in \mathcal{L}(\mathbb{X}^2) $ is compact. 
Moreover, since $\Gamma$ is of class $\mathcal{C}^2,$ then ${\rm D(A_W^{\alpha,\beta,\nu,\omega})}=\mathbb{V}^2$ if $\nu>0$. 
Indeed, for any $\alpha,\beta \ge 0$ with $(\alpha,\beta) \neq (0,0),$ the map $\Psi
:U\mapsto {\rm A_W^{\alpha,\beta,\nu,\omega}}U,$ when viewed as a map from $\mathbb{V}^2$ into $\mathbb{X}^2=L^2(\Omega)\times L^2(\Gamma),$ is an isomorphism, and there exists a
positive constant $C_\ast$, independent of $U=(u,\psi)^{\rm tr}$, such that
\begin{equation*}
C_\ast^{-1}\|U\|_{\mathbb{V}^2} \le \|\Psi(U)\|_{\mathbb{X}^2} \le C_\ast\|U\|_{\mathbb{V}^2},  \label{regularity-oper}
\end{equation*}
for all $U\in \mathbb{V}^2$ (cf. Lemma \ref{t:appendix-lemma-3}). 
Whenever $\nu=0$, by elliptic regularity theory and $U\in {\rm D(A_W^{\alpha,\beta,0,\omega})}$ one has $u\in H^{3/2}(\Omega) $ and $\psi={\rm tr_D}(u) \in H^1(\Gamma) $, since
the Dirichlet-to-Neumann map is bounded from $H^1(\Gamma)$ to $L^2(\Gamma)$; hence ${\rm D(A_W^{\alpha,\beta,0,\omega})}=\mathbb{W}$, where $\mathbb{W}$ is the Hilbert space equipped with the following (equivalent) norm
\begin{equation*}
\|U\|_{\mathbb{W}}^2 :=\|U\|_{\mathbb{V}_0^{3/2}}^2 + \|\Delta u\|_{L^2(\Omega)}^2 + \|\partial_{\bf n}u\|_{L^2(\Gamma)}^2.
\end{equation*}
Concerning the ``Wentzell'' Laplacian we refer the reader to more details to e.g., \cite{CGGM10,CFGGGOR09,Gal&Warma10}, and the references therein. We now have all
the necessary ingredients to introduce a rigorous formulation of Problem 
\textbf{P} in the next section.

For the nonlinear terms, first we assume $f,g \in C^0(\mathbb{R})$ satisfy the growth assumptions for some positive constants $\ell_1$ and $\ell_2$, and $d\ge2$ such that for all $r,s\in \mathbb{R}$, 
\begin{align}
& |f(r)-f(s)| \leq \ell_1(1+|r-s|^2),  \label{assm-1} \\
& |g(r)-g(s)| \leq \ell_2(1+|r-s|^{d-1}).  \label{assm-2}
\end{align}
Below we will set $F:\mathbb{R}^2\rightarrow\mathbb{R}^2,$
\begin{equation}
F(U):=\begin{pmatrix}f(u) \\ \widetilde{g}(u)\end{pmatrix},  \label{func}
\end{equation}
where $\widetilde{g}(s):=g(s)-\omega\beta s$, for $s\in\mathbb{R}.$
(To offset $\widetilde{g}$, the term $\omega\beta u$ will be incorporated in the operator ${\rm A_W^{0,0,\nu,0}}$ as ${\rm A_W^{0,\beta,\nu,\omega}}.$)
The next assumptions are that there are positive constants, $\kappa_i,$ $i=1,2,3,4,$ so that, for all $s\in\mathbb{R},$
\begin{align}
& f(s)s \ge \kappa_1|s|^4-\kappa_2,  \label{assm-3} \\
& \widetilde g(s)s \ge \kappa_3|s|^{r}-\kappa_4.  \label{assm-4}
\end{align}

The above conditions (\ref{assm-1})-(\ref{assm-4}) are suitable to prove the existence of an absorbing set for the family of solution operators generated by the weak solutions of Problem \textbf{P}.
When we consider the quasi-strong solutions we will employ the following conditions.
For some positive constants $\ell_1$ and $\ell_2$, and $d\ge2$ such that for all $r,s\in \mathbb{R}$, 

\begin{align}
& |f'(s)|\le \ell_1(1+|s|^2), \label{quasi-assm-1} \\
& |g'(s)|\le \ell_2(1+|s|^d). \label{quasi-assm-2}
\end{align}
In addition, in this case we will assume there are $M_f,M_g>0$ such that, for all $s\in\mathbb{R}$,
\begin{align}
f'(s) \ge -M_f, \label{quasi-assm-3} \\
g'(s) \ge -M_g. \label{quasi-assm-4}
\end{align}
Let
\begin{equation*}
h_f(s) =\int_0^sf'(\tau) \tau {\rm d}\tau \quad \text{and} \quad h_g(s) =\int_0^s\widetilde{g}'(\tau) \tau {\rm d}\tau.
\end{equation*}
Finally, in this case we will assume there exist $C_i>0,$ $i=1,\dots,8,$ such that, for all $s\in\mathbb{R}$,
\begin{align}
f(s)s\ge -C_1|s|^2-C_2, \label{quasi-assm-5} \\
g(s)s\ge -C_3|s|^2-C_4, \label{quasi-assm-6} \\
h_f(s) \ge -C_5|s|^2-C_6, \label{quasi-assm-7} \\
h_g(s) \ge -C_7|s|^2-C_8. \label{quasi-assm-8}
\end{align}

\begin{remark}
Observe that here we do not allow for the critical polynomial growth exponent (of power $5$ in (\ref{assm-1})) which appears in several works with static boundary conditions (cf. e.g. \cite{CDGP-2010,CPS06}). 
\end{remark}

Constants appearing below may depend on various structural parameters such as $\alpha$, $\beta,$ $\delta$, $\nu$, $\omega$, $|\Omega|$, $|\Gamma|$, $\ell_f$, $\ell_g$ and $d$, as well as those constants appearing in (\ref{assm-3})-(\ref{quasi-assm-8}), and the constants may even change from line to line.
We denote by $Q(\cdot)$ a generic monotonically increasing function. 
We will use $\|B\|_W:=\sup_{\Upsilon\in B}\|\Upsilon\|_W$ to denote the ``size'' of the subset $B$ in the Banach space $W$.

\section{Review of well-posedness}

Here we provide some definitions and cite the relevant global well-posedness results concerning Problem {\textbf{P}}.
For the remainder of this article we choose to set $n=3$, which is of course the most relevant physical dimension.

\begin{definition}  \label{d:weak-solution} 
Let $\alpha,\beta>0$, $\nu,\omega\in(0,1)$ and $T>0$. 
Given $U_0=(u_0,v_0)^{\rm tr}\in\mathbb{X}^2$ and $\Phi_0=(\eta_0,\xi_0)^{\rm tr}\in\mathcal{M}^1_{\Omega,\Gamma}$, the pair $U(t)=(u(t),v(t))^{\rm tr}$ and $\Phi^t=(\eta^t,\xi^t)^{\rm tr}$ satisfying
\begin{align*}
U &\in L^{\infty }(0,T;\mathbb{X}^2)\cap L^2(0,T;\mathbb{V}^1),  \\
u &\in L^4(\Omega\times(0,T)),  \\ 
v &\in L^{r}(\Gamma\times(0,T)),  \\ 
\Phi &\in L^{\infty }\left(0,T;\mathcal{M}^1_{\Omega,\Gamma}\right),  \\ 
\partial_t U &\in L^2\left(0,T;(\mathbb{V}^1)^*\right) \oplus \left( L^{4/3}(\Omega\times(0,T)) \times L^{r'}(\Gamma\times(0,T)) \right),  \\ 
\partial_t \Phi &\in L^2\left(0,T;W^{-1,2}_{\mu_\Omega\oplus\mu_\Gamma}(\mathbb{R}_+;\mathbb{V}^1) \right), 
\end{align*}
is said to be a weak solution to Problem {\textbf{P}} if, $v(t)=u_{\mid\Gamma}(t)$ and $\xi^t=\eta^t_{\mid\Gamma}$ for almost all $t\in(0,T]$, and
for all $\Xi =(\varsigma ,\varsigma _{\mid \Gamma })^{\rm tr}\in \mathbb{V}^1 \cap \left( L^2(\Omega) \times L^r(\Gamma) \right)$, $\Pi =(\rho ,\rho _{\mid \Gamma })^{\rm tr}\in \mathcal{M}^1_{\Omega,\Gamma}$, and for almost all $t\in(0,T]$, there holds, 
\begin{align}
\left\langle \partial_tU(t),\Xi \right\rangle _{\mathbb{X}^2} & + \left\langle {{\rm A_W^{0,\beta,\nu,\omega}}}U(t),\Xi \right\rangle_{\mathbb{X}^2}+ \int_0^\infty \mu_\Omega(s)\left\langle {{\rm A_W^{\alpha,0,0,\omega}}}\Phi^t(s),\Xi \right\rangle_{\mathbb{X}^2} {\rm d}s \notag \\
& + \nu\int_0^\infty \mu_\Gamma(s)\left\langle {\rm B}\xi^t(s),\varsigma_{\mid\Gamma} \right\rangle_{L^2(\Gamma)} {\rm d}s + \left\langle F(U(t)),\Xi\right\rangle_{\mathbb{X}^2}=0,  \label{weak-solution-1}
\end{align}
\begin{equation}  \label{weak-solution-2}
\left\langle \partial_t\eta^t,\rho \right\rangle _{\mathcal{M}^1_\Omega}=\left\langle {\rm T_r}\eta^t,\rho \right\rangle _{\mathcal{M}^1_\Omega}+\left\langle u(t),\rho \right\rangle _{\mathcal{M}^1_\Omega},  
\end{equation}
\begin{equation}  \label{weak-solution-2.5}
\left\langle \partial_t\xi^t,\rho_{\mid\Gamma} \right\rangle _{\mathcal{M}^1_\Gamma}=\left\langle {\rm T_r}\xi^t,\rho_{\mid\Gamma} \right\rangle_{\mathcal{M}^1_\Gamma}+\left\langle v(t),\rho_{\mid\Gamma} \right\rangle_{\mathcal{M}^1_\Gamma},  
\end{equation}
in addition, 
\begin{equation}
U(0)=U_0 \quad\text{and}\quad \Phi^0=\Phi_0. \label{weak-solution-3}
\end{equation}
The function $[0,T]\ni t\mapsto (U(t),\Phi^t )$ is called a global weak solution if it is a weak solution for every $T>0$.
\end{definition}

\begin{remark}  \label{r:trace-map}
When we have a weak solution to Problem {\textbf{P}}, the above restrictions $u_{\mid\Gamma}(t)$ and $\eta_{\mid\Gamma}^t$ are well-defined by virtue of the Dirichlet trace map, ${\rm tr_D}:H^1(\Omega)\rightarrow H^{1/2}(\Gamma)$. However, this is not necessarily the case for $\partial_t U$.
\end{remark}

\begin{definition}  \label{d:strong-solution} 
The pair $U(t)=(u(t),v(t))^{\rm tr}$ and $\Phi^t =(\eta^t,\xi^t)^{\rm tr}$ is called a quasi-strong solution of Problem {\textbf P} on $[0,T)$ if $(U(t),\Phi^t)$ satisfies the equations (\ref{weak-solution-1})-(\ref{weak-solution-3}) for all $\Xi \in \mathbb{V}^1$, $\Pi \in \mathcal{M}^1_{\Omega,\Gamma}$, almost everywhere on $(0,T)$ and if it has the regularity properties:
\begin{align*}
U &\in L^\infty \left( 0,T;\mathbb{V}^1 \right) \cap W^{1,2} \left( 0,T;\mathbb{V}^1 \right),   \\
\Phi &\in L^\infty \left( 0,T;{\rm D(T_r)} \right),   \\
\partial_tU &\in L^\infty\left( 0,T;\mathbb{X}^2 \right),   \\
\partial_t\Phi &\in L^\infty\left( 0,T;\mathcal{M}^1_{\Omega,\Gamma}\right).  
\end{align*}
As before, the function $[0,T]\ni t\mapsto (U(t),\Phi^t)$ is called a global quasi-strong solution if it is a quasi-strong solution for every $T>0$.
\end{definition}

The following is \cite[Theorem 3.9]{Gal-Shomberg15-2}.

\begin{theorem}   \label{t:weak-solutions}
Assume $\mu_S$, $S\in\{\Omega,\Gamma\}$, satisfy (\ref{mu-1})-(\ref{mu-3}) and $f,g\in C^0(\mathbb{R})$ satisfy (\ref{assm-1})-(\ref{assm-2}) and (\ref{assm-3})-(\ref{assm-4}). 
For each $\alpha,\beta>0$, $\omega,\nu\in (0,1)$ and $T>0$, and for any $U_0=(u_0,v_0)^{\rm tr}\in \mathbb{X}^2$, $\Phi_0=(\eta_0,\xi_0)^{\rm tr}\in \mathcal{M}_{\Omega,\Gamma }^1,$ there exists at least one global weak solution $(U,\Phi)$ to Problem {\textbf P} in the sense of Definition \ref{d:weak-solution}.
\end{theorem}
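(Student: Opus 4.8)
This is a standard existence-of-weak-solutions result for a dissipative PDE with memory, and the natural route is Faedo--Galerkin approximation combined with a priori estimates and compactness. Let me sketch the argument I would give.

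=== BEGIN PROOF PROPOSAL ===

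The plan is to construct weak solutions by a Faedo--Galerkin scheme, derive uniform a priori estimates from the natural energy identity, and pass to the limit using compactness. I will work in the history-variable formulation (\ref{problemp-1})--(\ref{problemp-4}), so that the memory is encoded through the auxiliary variable $\Phi^t=(\eta^t,\xi^t)^{\rm tr}$ governed by the translation semigroup $e^{{\rm T_r}t}$ from Proposition \ref{t:generator-T}.

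\emph{Step 1 (Galerkin basis and approximate problem).} Since the resolvent $({\rm I}+{\rm A_W^{\alpha,\beta,\nu,\omega}})^{-1}$ is compact on $\mathbb{X}^2$, the Wentzell operator admits a complete orthonormal system of eigenfunctions $\{W_j\}\subset\mathbb{V}^2$, which I use to build finite-dimensional subspaces of $\mathbb{X}^2$ for the $U$-component. For the memory component I project onto a suitable increasing family of subspaces of $\mathcal{M}^1_{\Omega,\Gamma}$ compatible with ${\rm T_r}$ (for instance generated by the same spatial eigenfunctions together with a temporal basis), so that the Galerkin memory equation reproduces (\ref{problemp-3})--(\ref{problemp-4}). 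Substituting the ansatz into the weak formulation (\ref{weak-solution-1})--(\ref{weak-solution-2.5}) yields a finite system of ODEs coupled to the transport equation for $\Phi$; local existence of the approximate solutions $(U_n,\Phi_n)$ follows from Carath\'eodory theory once the nonlinear terms $f,g$ are truncated if necessary to guarantee local Lipschitz regularity.

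\emph{Step 2 (A priori estimates).} Testing (\ref{weak-solution-1}) with $\Xi=U_n$ and (\ref{weak-solution-2})--(\ref{weak-solution-2.5}) with $\Pi=\Phi_n$ produces the basic energy identity. The key structural point is that the memory contributions, after integration by parts in $s$, combine with the dissipation inequality (\ref{operator-T-1}) of Corollary \ref{t:operator-T-1}, giving $\langle{\rm T_r}\Phi_n,\Phi_n\rangle\le 0$; this is precisely what makes the memory term nonnegative and prevents growth. The coercivity of ${\rm A_W^{0,\beta,\nu,\omega}}$ on $\mathbb{V}^1$ together with the sign conditions (\ref{assm-3})--(\ref{assm-4}) controls the nonlinearity via
\begin{equation*}
\langle F(U_n),U_n\rangle_{\mathbb{X}^2}\ge \kappa_1\|u_n\|^4_{L^4(\Omega)}+\kappa_3\|v_n\|^r_{L^r(\Gamma)}-(\kappa_2|\Omega|+\kappa_4|\Gamma|).
\end{equation*}
Integrating in time yields bounds for $U_n$ in $L^\infty(0,T;\mathbb{X}^2)\cap L^2(0,T;\mathbb{V}^1)$, for $u_n$ in $L^4(\Omega\times(0,T))$ and $v_n$ in $L^r(\Gamma\times(0,T))$, and for $\Phi_n$ in $L^\infty(0,T;\mathcal{M}^1_{\Omega,\Gamma})$, all uniformly in $n$. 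Bounds on $\partial_t U_n$ and $\partial_t\Phi_n$ in the dual spaces listed in Definition \ref{d:weak-solution} follow by comparison in the equations, using the growth bounds (\ref{assm-1})--(\ref{assm-2}) to estimate $F(U_n)$ in $L^{4/3}(\Omega\times(0,T))\times L^{r'}(\Gamma\times(0,T))$.

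\emph{Step 3 (Passage to the limit).} From the uniform bounds I extract a subsequence converging weakly-$*$ in the energy spaces and weakly in the $L^p$ spaces. Since $\mathbb{V}^1\hookrightarrow\mathbb{X}^2$ is compact, the Aubin--Lions--Simon lemma applied with the $\partial_t U_n$ bound gives strong convergence of $U_n$ in $L^2(0,T;\mathbb{X}^2)$ and hence $u_n\to u$ a.e.\ in $\Omega\times(0,T)$ (and on $\Gamma\times(0,T)$ via the trace). This a.e.\ convergence, combined with the uniform $L^4$ and $L^r$ bounds, identifies the weak limits of the nonlinear terms $f(u_n)\rightharpoonup f(u)$ and $g(v_n)\rightharpoonup g(v)$ through a standard Lions-type argument (continuity of Nemytskii operators plus uniform integrability). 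Continuity of the trace map $u_{\mid\Gamma}$ and of the memory trace $\eta^t_{\mid\Gamma}$ passes to the limit the compatibility relations $v=u_{\mid\Gamma}$, $\xi^t=\eta^t_{\mid\Gamma}$, and linearity lets the remaining terms pass directly. The initial conditions (\ref{weak-solution-3}) are recovered from the $C_w([0,T];\cdot)$ regularity implied by the time-derivative bounds.

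\emph{Main obstacle.} The delicate point is the identification of the nonlinear limits on the boundary: $g$ has only the subquadratic-type growth (\ref{assm-2}) with exponent $d-1$, so the a priori control of $v_n$ lives only in $L^r(\Gamma\times(0,T))$, and one must verify that the exponents $r$ and $d$ are compatible so that $g(v_n)$ is uniformly bounded in the dual space $L^{r'}(\Gamma\times(0,T))$ and that the truncation introduced in Step 1 can be removed in the limit. Because the two memory kernels $\mu_\Omega\neq\mu_\Gamma$ enter the interior and boundary equations through distinct inner products, care is also needed to keep the interior and boundary energy contributions separate throughout Step 2 rather than relying on a single combined kernel.

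=== END PROOF PROPOSAL ===
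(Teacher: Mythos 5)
This theorem is not proved in the present paper: it is imported verbatim as \cite[Theorem 3.9]{Gal-Shomberg15-2}, so there is no in-text argument to compare against. That said, your Faedo--Galerkin outline is the standard route and is essentially what the cited source carries out, and Steps 2 and 3 (energy identity using $\langle {\rm T_r}\Phi,\Phi\rangle\le 0$, which indeed needs only (\ref{mu-3}) and not (\ref{mu-4}); Aubin--Lions plus a.e.\ convergence to identify $f(u_n)$ and $g(v_n)$) are correctly targeted at the function classes of Definition \ref{d:weak-solution}. The one genuinely soft spot is your Step 1 treatment of the memory variable: projecting $\mathcal{M}^1_{\Omega,\Gamma}$ onto finite-dimensional subspaces ``compatible with ${\rm T_r}$'' via a temporal basis is not a construction you can wave at --- the space $L^2_{\mu_\Omega\oplus\mu_\Gamma}(\mathbb{R}_+;\mathbb{V}^1)$ has no natural eigenbasis for the translation generator, and the literature you would lean on (\cite{CPS06,GPM98,Grasselli&Pata02-2}, and \cite{Gal-Shomberg15-2} itself) instead discretizes only in the spatial variable and keeps the $s$-dependence exact, solving the transport equation for the Galerkin memory component explicitly through the representation formula of Corollary \ref{t:memory-regularity-1}; this is what makes the energy identity and the bound $\Phi_n\in L^\infty(0,T;\mathcal{M}^1_{\Omega,\Gamma})$ come out cleanly. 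You correctly flag the exponent bookkeeping between $r$ in (\ref{assm-4}) and $d$ in (\ref{assm-2}) as the condition needed for $g(v_n)$ to land in $L^{r'}(\Gamma\times(0,T))$; note also that with $f,g$ merely continuous, Peano/Carath\'eodory existence for the Galerkin ODEs does not actually require the truncation you introduce, which spares you the task of removing it in the limit.
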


Now we have \cite[Proposition 3.10]{Gal-Shomberg15-2}.

\begin{proposition}      \label{t:cont-dep} 
Suppose the assumptions of Theorem \ref{t:weak-solutions} hold. 
The global weak solution to Problem {\textbf{P}} is unique and depends continuously on the initial datum in the following way; there exists a constant $C>0$, independent of $U_i$, $\Phi_i$, $i=1,2$, and $T>0$ in which, for all $t\in[0,T]$, there holds
\begin{align}
\left\| U_1(t)-U_2(t) \right\|_{\mathbb{X}^2} + \left\| \Phi^t_1-\Phi^t_2 \right\|_{\mathcal{M}^1_{\Omega,\Gamma}} \le \left( \left\| U_1(0)-U_2(0) \right\|_{\mathbb{X}^2} + \left\| \Phi^0_1-\Phi^0_2 \right\|_{\mathcal{M}^1_{\Omega,\Gamma}} \right) e^{Ct}. \label{cde}
\end{align}
\end{proposition}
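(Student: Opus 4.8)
The plan is to argue by the standard energy method applied to the difference of two weak solutions, so that uniqueness emerges as the special case of equal initial data. Let $(U_1,\Phi_1)$ and $(U_2,\Phi_2)$ be two global weak solutions in the sense of Definition \ref{d:weak-solution}, and set $\bar U := U_1 - U_2$ and $\bar\Phi := \Phi_1 - \Phi_2$. Subtracting the identities (\ref{weak-solution-1})--(\ref{weak-solution-2.5}) written for each solution, every term is linear in the unknowns except the nonlinearity, which contributes $F(U_1) - F(U_2)$. First I would insert the admissible test function $\Xi = \bar U$ into the subtracted version of (\ref{weak-solution-1}) and $\Pi = \bar\Phi$ into the subtracted versions of (\ref{weak-solution-2})--(\ref{weak-solution-2.5}), then add the three resulting identities.

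The structural heart of the computation is that the two memory-coupling terms cancel. The interior term $\int_0^\infty \mu_\Omega(s)\langle {\rm A_W^{\alpha,0,0,\omega}}\bar\Phi(s),\bar U\rangle_{\mathbb{X}^2}\,{\rm d}s$ together with the boundary piece weighted by $\mu_\Gamma$ produced by the state equation is precisely $\langle\bar\Phi,\bar U\rangle$ in the weighted $\mathcal{M}^1_{\Omega,\Gamma}$ pairing, whereas the source terms $\langle\bar u,\bar\eta\rangle$ and $\langle\bar v,\bar\xi\rangle$ in the memory equations reproduce the same pairing with the opposite sign; the $\omega$- and $\nu$-weights built into the $\mathcal{M}^1_{\Omega,\Gamma}$ inner product are exactly what make this cancellation exact despite $\mu_\Omega \ne \mu_\Gamma$. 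After cancellation, the time derivatives assemble into $\tfrac12\frac{d}{dt}(\|\bar U\|^2_{\mathbb{X}^2} + \|\bar\Phi\|^2_{\mathcal{M}^1_{\Omega,\Gamma}})$, the Wentzell term contributes the nonnegative quantity $\langle {\rm A_W^{0,\beta,\nu,\omega}}\bar U,\bar U\rangle_{\mathbb{X}^2} \ge 0$, and Corollary \ref{t:operator-T-1} bounds the memory transport contribution via $-\langle{\rm T_r}\bar\Phi,\bar\Phi\rangle_{\mathcal{M}^1_{\Omega,\Gamma}} \ge \tfrac{\delta}{2}\|\bar\Phi\|^2_{\mathcal{M}^1_{\Omega,\Gamma}}$. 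I would retain the coercive Wentzell term for later absorption, leaving on the right only $-\langle F(U_1)-F(U_2),\bar U\rangle_{\mathbb{X}^2}$.

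The decisive step is to show that this nonlinear term is dominated by $C(\|\bar U\|^2_{\mathbb{X}^2} + \|\bar\Phi\|^2_{\mathcal{M}^1_{\Omega,\Gamma}})$ with a constant independent of the two solutions. Splitting into interior and boundary contributions and invoking the growth hypotheses (\ref{assm-1})--(\ref{assm-2}), I would estimate $\int_\Omega|f(u_1)-f(u_2)||\bar u| + \int_\Gamma|\widetilde g(v_1)-\widetilde g(v_2)||\bar v|$; the super-linear portions ($|\bar u|^3$ on $\Omega$ and $|\bar v|^d$ on $\Gamma$) would be handled by interpolation between $L^2$ and the Sobolev range $H^1\hookrightarrow L^6$ in dimension three, together with the space-time integrability $u\in L^4(\Omega\times(0,T))$ and $v\in L^r(\Gamma\times(0,T))$ recorded in Definition \ref{d:weak-solution}, absorbing the top-order pieces into the retained term $\langle {\rm A_W^{0,\beta,\nu,\omega}}\bar U,\bar U\rangle_{\mathbb{X}^2}$, and reducing the remainder to a multiple of $\|\bar U\|^2_{\mathbb{X}^2}$ through the one-sided (semi-monotone) lower bounds on $f$ and $\widetilde g$. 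I expect this to be the main obstacle: a crude application of (\ref{assm-1})--(\ref{assm-2}) only yields a Gronwall coefficient depending on the solutions' higher norms, so one must exploit the monotone structure of the nonlinearities, and not growth alone, to secure a genuinely solution-independent $C$.

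Finally, once the differential inequality $\frac{d}{dt}\big(\|\bar U\|^2_{\mathbb{X}^2} + \|\bar\Phi\|^2_{\mathcal{M}^1_{\Omega,\Gamma}}\big) \le 2C\big(\|\bar U\|^2_{\mathbb{X}^2} + \|\bar\Phi\|^2_{\mathcal{M}^1_{\Omega,\Gamma}}\big)$ is in hand, the Gronwall lemma yields exactly (\ref{cde}), and uniqueness follows by setting $U_1(0)=U_2(0)$ and $\Phi^0_1 = \Phi^0_2$. One technical point I would address carefully before the Gronwall step: since Definition \ref{d:weak-solution} only places $\partial_t U$ in a dual space, the pairing $\langle\partial_t\bar U,\bar U\rangle$ and its identification with $\tfrac12\frac{d}{dt}\|\bar U\|^2_{\mathbb{X}^2}$ is not immediate, so I would justify it by a standard density/Lions-type argument (or Steklov averaging in time), which is legitimate here because $\bar U$ inherits enough regularity from $U\in L^2(0,T;\mathbb{V}^1)$.
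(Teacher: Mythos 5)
You should know at the outset that this paper never proves Proposition \ref{t:cont-dep}: it is imported verbatim from \cite[Proposition 3.10]{Gal-Shomberg15-2}, so the only in-paper analogues of your computation are the single-solution energy identity (\ref{fus-6}) in Lemma \ref{t:weak-ball} and the integrated-difference argument of Theorem \ref{t:weak-cont}. Your skeleton --- test the difference equations with $\bar U$ and $\bar\Phi$, use the $\omega$- and $\nu$-weighted structure of $\mathcal{M}^1_{\Omega,\Gamma}$ to cancel the memory couplings against the source terms of the history equations, keep the Wentzell form for absorption, and run Gr\"{o}nwall --- is the standard route and is surely the one the cited source follows; your remark on justifying $\langle\partial_t\bar U,\bar U\rangle_{\mathbb{X}^2}=\tfrac12\tfrac{d}{dt}\|\bar U\|^2_{\mathbb{X}^2}$ is also well placed. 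One calibration point: the coercive bound $-\langle{\rm T_r}\bar\Phi,\bar\Phi\rangle_{\mathcal{M}^1_{\Omega,\Gamma}}\ge\tfrac{\delta}{2}\|\bar\Phi\|^2_{\mathcal{M}^1_{\Omega,\Gamma}}$ of Corollary \ref{t:operator-T-1} rests on (\ref{mu-4}), which is not among the hypotheses of Theorem \ref{t:weak-solutions}; under (\ref{mu-1})--(\ref{mu-3}) alone you only get $\langle{\rm T_r}\bar\Phi,\bar\Phi\rangle_{\mathcal{M}^1_{\Omega,\Gamma}}\le0$, which is all you need here, so you should state it in that weaker form.

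The genuine gap is exactly where you predicted it: the nonlinearity. Your closing step invokes ``one-sided (semi-monotone) lower bounds on $f$ and $\widetilde g$,'' but those are (\ref{quasi-assm-3})--(\ref{quasi-assm-4}), which the paper reserves for the quasi-strong theory; the hypotheses of Theorem \ref{t:weak-solutions} are only (\ref{assm-1})--(\ref{assm-4}), and none of them bounds $f'$ or $g'$ from below. Without that input, the interpolation fallback you sketch does not close. Reading (\ref{assm-1}) the way the paper itself uses it in (\ref{func-1}), namely $|f(r)-f(s)|\le\ell_1|r-s|(1+|r-s|^2)$, the interior contribution is controlled only by $\|\bar u\|^2_{L^2(\Omega)}+\|\bar u\|^4_{L^4(\Omega)}$, and in dimension three the best available interpolation gives $\|\bar u\|^4_{L^4(\Omega)}\le C\|\bar u\|^3_{H^1(\Omega)}\|\bar u\|_{L^2(\Omega)}$: the $H^1$ power exceeds two, so Young's inequality cannot absorb it into the quadratic dissipation $\omega\|\nabla\bar u\|^2_{L^2(\Omega)}$, while the space-time bound $u_i\in L^4(\Omega\times(0,T))$ produces only an \emph{additive} $L^1_t$ remainder rather than a multiplicative Gr\"{o}nwall coefficient --- and any additive remainder is fatal, since (\ref{cde}) must return zero from a zero initial difference. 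So either the semi-monotonicity must be adjoined to the hypotheses (as you instinctively did, and as the cited source effectively requires), in which case $\langle F(U_1)-F(U_2),\bar U\rangle_{\mathbb{X}^2}\ge-\max\{M_f,M_g+\omega\beta\}\|\bar U\|^2_{\mathbb{X}^2}$ closes the argument immediately with a solution-independent $C$, or an entirely different mechanism is needed. As written, your proposal establishes the proposition under strictly stronger assumptions than those stated; you should say so explicitly rather than fold (\ref{quasi-assm-3})--(\ref{quasi-assm-4}) in silently. (A smaller caveat: your claim that the memory couplings cancel \emph{exactly} is consistent with Definition \ref{d:weak-solution} as written and is in fact forced here, since Proposition \ref{t:cont-dep} does not assume the smallness condition (\ref{assk}) that the paper needs in (\ref{fus-9.1}) precisely because, there, the boundary memory term does not cancel; be aware that the paper is not internally consistent on this point.)
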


\begin{theorem}  \label{t:quasi-strong}
Assume $\mu_S$, $S\in\{\Omega,\Gamma\}$, satisfy (\ref{mu-1})-(\ref{mu-4}) and $f,g\in C^1(\mathbb{R})$ satisfy (\ref{quasi-assm-1})-(\ref{quasi-assm-8}). 
For each $\alpha,\beta >0$, $\omega,\nu \in (0,1)$, $T>0$ and for any $U_0=(u_0,v_0)^{\rm tr}\in\mathbb{V}^2$,$\Phi_0=(\eta_0,\xi_0)^{\rm tr} \in (\mathcal{M}^2_{\Omega,\Gamma} \cap {\rm D(T_r)})$, there exists a unique global quasi-strong solution $(U,\Phi)$ to Problem \textbf{P} in the sense of Definition \ref{d:strong-solution}.
\end{theorem}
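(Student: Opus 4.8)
The plan is to obtain existence through a Faedo--Galerkin scheme in the spatial variables, coupled with the exact resolution of the memory equation via the semigroup $e^{{\rm T_r}t}$ of Proposition \ref{t:generator-T}, to derive uniform estimates realizing the quasi-strong regularity, and then to pass to the limit. Uniqueness follows immediately: a quasi-strong solution is in particular a weak solution in the sense of Definition \ref{d:weak-solution}, so the continuous dependence estimate (\ref{cde}) of Proposition \ref{t:cont-dep} applies and forces two quasi-strong solutions from the same datum to coincide. (The one-sided bounds (\ref{quasi-assm-3})--(\ref{quasi-assm-4}) supply exactly the monotonicity needed to run the Gronwall argument behind (\ref{cde}).) Thus the whole burden is the \emph{a priori} estimates for the approximants, which I denote $(U_m,\Phi_m)$.

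The central step is a differentiated-in-time energy estimate. Differentiating the system (\ref{weak-solution-1})--(\ref{weak-solution-2.5}) in $t$, testing the interior-plus-boundary equation with $\partial_t U_m$ in $\mathbb{X}^2$, and testing the differentiated memory equation $\partial_t(\partial_t\Phi_m)={\rm T_r}(\partial_t\Phi_m)+\partial_t U_m$ with $\partial_t\Phi_m$ in $\mathcal{M}^1_{\Omega,\Gamma}$, one checks that the memory-coupling contributions cancel upon addition: the Wentzell coupling $\int_0^\infty\mu_\Omega\langle {\rm A_W^{\alpha,0,0,\omega}}\partial_t\Phi_m,\partial_t U_m\rangle\,{\rm d}s + \nu\int_0^\infty\mu_\Gamma\langle {\rm B}\partial_t\xi_m,\partial_t v_m\rangle\,{\rm d}s$ equals, after integration by parts and the trace identity $v_m=u_{m\mid\Gamma}$, the symmetric pairing $\langle \partial_t\Phi_m,\overline{\partial_t U_m}\rangle_{\mathcal{M}^1_{\Omega,\Gamma}}$ (with $\overline{\partial_t U_m}$ the constant-in-$s$ extension), which is precisely what the memory equation produces with the opposite sign. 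Using the dissipativity (\ref{operator-T-1}) this leaves
\[
\frac{1}{2}\frac{d}{dt}\left( \|\partial_t U_m\|^2_{\mathbb{X}^2} + \|\partial_t\Phi_m\|^2_{\mathcal{M}^1_{\Omega,\Gamma}} \right) + \langle {\rm A_W^{0,\beta,\nu,\omega}}\partial_t U_m, \partial_t U_m\rangle_{\mathbb{X}^2} + \langle F'(U_m)\partial_t U_m, \partial_t U_m\rangle_{\mathbb{X}^2} \le -\frac{\delta}{2}\|\partial_t\Phi_m\|^2_{\mathcal{M}^1_{\Omega,\Gamma}}.
\]
The coercivity of the Wentzell operator controls $\|\partial_t U_m\|^2_{\mathbb{V}^1}$ from below (up to the Sobolev--Poincar\'e inequality), while (\ref{quasi-assm-3})--(\ref{quasi-assm-4}) give $\langle F'(U_m)\partial_t U_m,\partial_t U_m\rangle_{\mathbb{X}^2}\ge -C\|\partial_t U_m\|^2_{\mathbb{X}^2}$, so Gronwall yields, uniformly in $m$, the bounds $\partial_t U_m \in L^\infty(0,T;\mathbb{X}^2)\cap L^2(0,T;\mathbb{V}^1)$ and $\partial_t\Phi_m\in L^\infty(0,T;\mathcal{M}^1_{\Omega,\Gamma})$.

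A delicate point is the uniform bound on the initial value $\partial_t U_m(0)$ in $\mathbb{X}^2$. Reading it from the equation gives $\partial_t U(0) = -{\rm A_W^{0,\beta,\nu,\omega}}U_0 - \int_0^\infty \mu_\Omega(s){\rm A_W^{\alpha,0,0,\omega}}\Phi_0(s)\,{\rm d}s - \nu\int_0^\infty \mu_\Gamma(s){\rm B}\xi_0(s)\,{\rm d}s - F(U_0)$, and this is exactly where $U_0\in\mathbb{V}^2$ and $\Phi_0\in\mathcal{M}^2_{\Omega,\Gamma}\cap{\rm D(T_r)}$ are used: the first term lies in $\mathbb{X}^2$ since $\mathbb{V}^2={\rm D(A_W^{0,\beta,\nu,\omega})}$, the memory integrals lie in $\mathbb{X}^2$ by $\Phi_0\in\mathcal{M}^2_{\Omega,\Gamma}$, and $F(U_0)\in\mathbb{X}^2$ follows from the growth bounds (\ref{quasi-assm-1})--(\ref{quasi-assm-2}) with the three-dimensional embedding $H^2\hookrightarrow C$; likewise $\partial_t\Phi_m(0)={\rm T_r}\Phi_0+U_0\in\mathcal{M}^1_{\Omega,\Gamma}$ uses $\Phi_0\in{\rm D(T_r)}$. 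I would then harvest the remaining regularity: the bound on $\partial_t U_m$ in $L^\infty(0,T;\mathbb{X}^2)$ together with the isomorphism property of the Wentzell operator (the memory integrals being already controlled in $\mathbb{X}^2$) returns $U_m\in L^\infty(0,T;\mathbb{V}^1)$, and with $\partial_t U_m\in L^2(0,T;\mathbb{V}^1)$ gives $U_m\in W^{1,2}(0,T;\mathbb{V}^1)$; while the identity ${\rm T_r}\Phi_m=\partial_t\Phi_m-U_m$ read from (\ref{problemp-3}) shows $\partial_s\Phi_m=U_m-\partial_t\Phi_m$ is bounded in $\mathcal{M}^1_{\Omega,\Gamma}$ (a constant-in-$s$ element of $\mathbb{V}^1$ lies in $\mathcal{M}^1_{\Omega,\Gamma}$ since $\mu_S\in L^1(\mathbb{R}_+)$), so $\Phi_m\in L^\infty(0,T;{\rm D(T_r)})$.

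I expect the differentiated estimate to be the main obstacle: keeping the memory-coupling cancellation exact and absorbing $\langle F'(U_m)\partial_t U_m,\partial_t U_m\rangle_{\mathbb{X}^2}$ using only the one-sided bounds (\ref{quasi-assm-3})--(\ref{quasi-assm-4}), with no two-sided Lipschitz control available. A secondary difficulty is the passage to the limit in the memory and nonlinear terms: since $\mathcal{M}^1_{\Omega,\Gamma}\hookrightarrow\mathcal{M}^0_{\Omega,\Gamma}$ is \emph{not} compact, I would identify $f(u_m)\to f(u)$ and $g(v_m)\to g(v)$ by an Aubin--Lions--Simon argument (the bounds on $U_m$ in $L^\infty(0,T;\mathbb{V}^1)$ and $\partial_t U_m$ in $L^\infty(0,T;\mathbb{X}^2)$ give strong convergence of $U_m$ in $C([0,T];\mathbb{X}^2)$ and almost-everywhere convergence through the three-dimensional embeddings), and transport this to the memory component through the explicit representation formula of Corollary \ref{t:memory-regularity-1}, which writes $\Phi_m$ in terms of $U_m$.
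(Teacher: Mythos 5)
Your overall strategy --- Galerkin approximation, uniqueness via the continuous dependence estimate (\ref{cde}) since a quasi-strong solution is in particular a weak solution, the time-differentiated energy identity in which the memory couplings cancel upon summation and the one-sided bounds (\ref{quasi-assm-3})--(\ref{quasi-assm-4}) absorb $\langle F'(U)\partial_tU,\partial_tU\rangle_{\mathbb{X}^2}$, and the computation of $\partial_tU(0)$, $\partial_t\Phi^0$ from the equation using $U_0\in\mathbb{V}^2$ and $\Phi_0\in\mathcal{M}^2_{\Omega,\Gamma}\cap{\rm D(T_r)}$ --- is exactly the argument the paper relies on: the theorem is imported from \cite[Theorem 3.13]{Gal-Shomberg15-2}, and these estimates are reproduced in the proof of Lemma \ref{t:quasi-ball}, see (\ref{ap1})--(\ref{qest17}).

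The genuine gap is the step where you recover $U\in L^\infty(0,T;\mathbb{V}^1)$. You propose to read this off the equation via the isomorphism property of the Wentzell operator, asserting that the memory integrals are ``already controlled in $\mathbb{X}^2$.'' They are not: $\int_0^\infty\mu_\Omega(s)\,{\rm A_W^{\alpha,0,0,\omega}}\Phi^t(s)\,{\rm d}s\in\mathbb{X}^2$ requires $\Phi^t$ bounded in $\mathcal{M}^2_{\Omega,\Gamma}$, which is assumed only at $t=0$; for $t>0$ the representation formula of Corollary \ref{t:memory-regularity-1} gives $\Phi^t(s)=\int_0^s U(t-y)\,{\rm d}y$ for $s\le t$, so $\Phi^t$ inherits no more spatial regularity than $U$ itself, making the bootstrap circular. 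The same circularity afflicts $F(U)$: with cubic growth of $f$, putting $f(u)$ in $L^2(\Omega)$ needs $u\in L^6(\Omega)$, i.e.\ essentially the $\mathbb{V}^1$ bound you are trying to prove. This is precisely why the hypotheses (\ref{quasi-assm-5})--(\ref{quasi-assm-8}) appear in the statement, and it is telling that your proposal never invokes them. The correct route (see (\ref{qest18})--(\ref{qest21})) is to test the \emph{time-differentiated} equation with $\Xi=U(t)$, producing a differential identity for the functional $\|U\|^2_{\mathbb{V}^1}+\langle\partial_tU,U\rangle_{\mathbb{X}^2}+2\int_\Omega h_f(u)\,{\rm d}x+2\int_\Gamma h_g(u)\,{\rm d}\sigma$, which (\ref{quasi-assm-7})--(\ref{quasi-assm-8}) show is coercive in $\|U\|^2_{\mathbb{V}^1}$ up to an additive $Q(R)$ (this is (\ref{qest19tris})); the right-hand side $2\|\partial_tU\|^2_{\mathbb{X}^2}-2\langle\partial_t\Phi^t,U\rangle_{\mathcal{M}^1_{\Omega,\Gamma}}$ is in $L^1(0,\infty)$ by (\ref{qest17}), (\ref{qest20}) and (\ref{mu-4}), and Gr\"onwall then yields the uniform $\mathbb{V}^1$ bound. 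With that repaired, your derivation of $\Phi\in L^\infty(0,T;{\rm D(T_r)})$ from ${\rm T_r}\Phi=\partial_t\Phi-U$ goes through as you describe.
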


We conclude the preliminary results for Problem {\textbf{P}} with the following. 

\begin{corollary} \label{t:sg}
Problem {\textbf{P}} defines a (nonlinear) strongly continuous semigroup $\mathcal{S}(t)$ on the phase space $\mathcal{H}^{0,1}_{\Omega,\Gamma} = \mathbb{X}^2\times\mathcal{M}^1_{\Omega,\Gamma}$ by 
\begin{equation*}
\mathcal{S}(t)\Upsilon_0 := \left( U(t),\Phi^t \right),
\end{equation*}
where $\Upsilon_0=(U_0,\Phi_0)\in \mathcal{H}^{0,1}_{\Omega,\Gamma}$ and $( U(t),\Phi^t )$ is the unique solution to Problem {\textbf{P}}. 
The semigroup is Lipschitz continuous on $\mathcal{H}^{0,1}_{\Omega,\Gamma}$ via the continuous dependence estimate (\ref{cde}).
\end{corollary}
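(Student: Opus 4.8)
The plan is to verify, in order, that $\mathcal{S}(t)$ is well-defined as a map on $\mathcal{H}^{0,1}_{\Omega,\Gamma}$, that it obeys the semigroup identities, that it is Lipschitz, and finally that it is strongly continuous in time. Well-definedness is immediate from the preceding results: for each $\Upsilon_0=(U_0,\Phi_0)\in\mathcal{H}^{0,1}_{\Omega,\Gamma}$, Theorem \ref{t:weak-solutions} produces at least one global weak solution and Proposition \ref{t:cont-dep} guarantees uniqueness, so the assignment $\mathcal{S}(t)\Upsilon_0=(U(t),\Phi^t)$ is unambiguous. To read off a genuine value $(U(t),\Phi^t)\in\mathcal{H}^{0,1}_{\Omega,\Gamma}$ at each fixed $t\ge0$ (not merely almost every $t$), I would pass to the time-continuous representative of the solution: the $\Phi$-component is continuous into $\mathcal{M}^1_{\Omega,\Gamma}$ by the explicit representation formula of Corollary \ref{t:memory-regularity-1}, and the $U$-component admits a representative in $C([0,T];\mathbb{X}^2)$, whose justification I defer to the continuity step below.

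Next I would establish the semigroup laws. The identity $\mathcal{S}(0)=\mathrm{Id}$ follows directly from the initial conditions (\ref{weak-solution-3}). The decisive observation for the composition law is that Problem \textbf{P}, once recast in the extended (history) phase space, is \emph{autonomous}: equations (\ref{weak-solution-1})--(\ref{weak-solution-2.5}) carry no explicit dependence on $t$, and the history equation (\ref{memory-2}) for $\Phi$ depends only on the instantaneous value $U(t)$. Fixing $s\ge0$ and letting $(U,\Phi)$ denote the solution issuing from $\Upsilon_0$, I would set $\widetilde U(\tau):=U(\tau+s)$ and $\widetilde\Phi^{\tau}:=\Phi^{\tau+s}$ and check that $(\widetilde U,\widetilde\Phi)$ is again a global weak solution, now with initial datum $(U(s),\Phi^s)=\mathcal{S}(s)\Upsilon_0$. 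Uniqueness (Proposition \ref{t:cont-dep}) then forces
\begin{equation*}
\mathcal{S}(t)\bigl(\mathcal{S}(s)\Upsilon_0\bigr)=(\widetilde U(t),\widetilde\Phi^{t})=(U(t+s),\Phi^{t+s})=\mathcal{S}(t+s)\Upsilon_0,
\end{equation*}
which is the required identity $\mathcal{S}(t+s)=\mathcal{S}(t)\mathcal{S}(s)$.

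Lipschitz continuity of each map $\mathcal{S}(t)$ is then nothing more than a reformulation of the continuous dependence estimate (\ref{cde}): since the sum $\|\cdot\|_{\mathbb{X}^2}+\|\cdot\|_{\mathcal{M}^1_{\Omega,\Gamma}}$ and the graph norm of $\mathcal{H}^{0,1}_{\Omega,\Gamma}$ are equivalent, (\ref{cde}) yields
\begin{equation*}
\bigl\|\mathcal{S}(t)\Upsilon_0^{(1)}-\mathcal{S}(t)\Upsilon_0^{(2)}\bigr\|_{\mathcal{H}^{0,1}_{\Omega,\Gamma}}\le \sqrt{2}\,e^{Ct}\bigl\|\Upsilon_0^{(1)}-\Upsilon_0^{(2)}\bigr\|_{\mathcal{H}^{0,1}_{\Omega,\Gamma}},
\end{equation*}
so $\mathcal{S}(t)$ is globally Lipschitz on $\mathcal{H}^{0,1}_{\Omega,\Gamma}$ with constant uniform for $t$ in compact intervals.

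It remains to confirm strong continuity, i.e.\ that $t\mapsto\mathcal{S}(t)\Upsilon_0$ is continuous into $\mathcal{H}^{0,1}_{\Omega,\Gamma}$, and this is the step I expect to demand the most care, because the weak-solution framework only records $U\in L^\infty\cap L^2$ in time a priori, and the time-derivative $\partial_tU$ lives in a \emph{sum} space (see Definition \ref{d:weak-solution}), so a direct Lions--Magenes interpolation argument for $C([0,T];\mathbb{X}^2)$ is not clean. My strategy would therefore be a density argument. On the more regular class $\Upsilon_0\in\mathbb{V}^2\times(\mathcal{M}^2_{\Omega,\Gamma}\cap{\rm D(T_r)})$ the quasi-strong solutions of Theorem \ref{t:quasi-strong} enjoy $U\in W^{1,2}(0,T;\mathbb{V}^1)\hookrightarrow C([0,T];\mathbb{V}^1)$ and $\partial_t\Phi\in L^\infty(0,T;\mathcal{M}^1_{\Omega,\Gamma})$, so that $t\mapsto(U(t),\Phi^t)$ is manifestly continuous in $\mathcal{H}^{0,1}_{\Omega,\Gamma}$. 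Given arbitrary $\Upsilon_0\in\mathcal{H}^{0,1}_{\Omega,\Gamma}$, I would approximate it in $\mathcal{H}^{0,1}_{\Omega,\Gamma}$ by such regular data and invoke the Lipschitz bound above, uniform in $t$ on compact intervals, to transfer continuity to the limit. Together with the semigroup identity already proved, this delivers continuity of $t\mapsto\mathcal{S}(t)\Upsilon_0$ for every $\Upsilon_0$ and every $t\ge0$; the same two ingredients (time-continuity on a dense set plus the uniform Lipschitz estimate) furnish the joint continuity, uniform on compact time intervals, asserted in the statement.
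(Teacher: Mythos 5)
Your proposal is correct, and it supplies in full the argument that the paper leaves implicit: the paper states this corollary without proof, treating it as an immediate consequence of Theorem \ref{t:weak-solutions} (existence), Proposition \ref{t:cont-dep} (uniqueness and the estimate (\ref{cde})), and the autonomy of the system. Your verification of well-definedness, of the composition law via time-translation plus uniqueness, and of the Lipschitz property is exactly the intended reasoning. The one place where your route genuinely diverges from the minimal argument is the strong-continuity step: you obtain $t\mapsto\mathcal{S}(t)\Upsilon_0\in C([0,T];\mathcal{H}^{0,1}_{\Omega,\Gamma})$ by approximating the datum with elements of $\mathbb{V}^2\times(\mathcal{M}^2_{\Omega,\Gamma}\cap{\rm D(T_r)})$ (dense, as the paper notes in Section 4) and passing the continuity of the quasi-strong trajectories through the Lipschitz bound, uniform on compact time intervals. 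This works, but note that Theorem \ref{t:quasi-strong} requires the stronger hypotheses (\ref{quasi-assm-1})--(\ref{quasi-assm-8}) on $f$ and $g$, which are not assumed in Theorem \ref{t:weak-solutions}; so as written your argument proves strong continuity only under those extra assumptions. A route that stays within the weak-solution framework is the standard continuity lemma for functions with $U\in L^2(0,T;\mathbb{V}^1)\cap\left(L^4(\Omega\times(0,T))\times L^r(\Gamma\times(0,T))\right)$ and $\partial_tU$ in the corresponding dual sum space (cf.\ \cite[Lemma III.1.1]{Temam88}, which the paper itself invokes for the absolute continuity of $t\mapsto\|U(t)\|^2_{\mathbb{X}^2}$ in the proof of Lemma \ref{t:weak-ball}), combined with the explicit representation formula of Corollary \ref{t:memory-regularity-1} for the memory component, which you already use. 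Either way the conclusion stands; your density argument simply buys the result at the cost of the additional regularity hypotheses on the nonlinearities.
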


\section{Dissipation of weak solutions}

This section is dedicated to three results for the weak solutions to Problem {\textbf P}.
We will show the existence of a bounded absorbing set in the phase space $\mathcal{H}^{0,1}_{\Omega,\Gamma}=\mathbb{X}^2\times\mathcal{M}^1_{\Omega,\Gamma}.$
Hence, we establish that the semigroup of solution operators, $\mathcal{S}(t):\mathcal{H}^{0,1}_{\Omega,\Gamma}\rightarrow\mathcal{H}^{0,1}_{\Omega,\Gamma}$, defined above in Corollary \ref{t:sg}, is dissipative. 
It was already established in \cite[Lemma 3.2]{Shomberg-reacg16} that when $\mu_\Omega=\mu_\Gamma$, the semigroup of solution operators admits a bounded absorbing set $\mathcal{B}^0$ in the weak energy phase space $\mathcal{H}^{0,1}_{\Omega,\Gamma}$.
We now show the existence of a bounded absorbing set for the general case where $\mu_\Omega\not=\mu_\Gamma$, however, our argument requires a smallness condition on $k_\Gamma(0)$ (see the proof of Lemma \ref{t:to-H2}).
Precisely, the following dissipation result assumes 
\begin{equation}  \label{assk}
k_\Gamma(0)\le\frac{4}{1-\omega}.
\end{equation}
The final result in this section concerns a local Lipschitz continuity result for the weak solutions in the weak space called $\mathcal{H}^{-1,0}_{\Omega,\Gamma}$ defined below in (\ref{spaces}).
It is important to recall that, because of the (rather strict) assumption to Lemma \ref{what-1}, that $\|U(t)\|_{\mathbb{V}^1}\le K$ for some constant $K>0$ for all $t\ge0,$ we are not able to establish that the weak solutions admit a {\em compact} absorbing set in the weak topology of $\mathcal{H}^{0,1}_{\Omega,\Gamma}$.
In order to apply Lemma \ref{what-1}, and, in turn, provide the existence of finite dimensional exponential attractors in the weak topology of $\mathcal{H}^{0,1}_{\Omega,\Gamma}$, we will need to employ the fairly smoother quasi-strong solutions defined above (see Definition \ref{d:strong-solution}).
Thus, the bounded absorbing set for the weak solutions in $\mathcal{H}^{0,1}_{\Omega,\Gamma}$ can be used to construct a {\em compact} absorbing set for the quasi-strong solutions in $\mathcal{H}^{-1,0}_{\Omega,\Gamma}$.

Our first result concerning the weak solutions of Problem {\textbf P} now follows.

\begin{lemma}  \label{t:weak-ball}
In addition to the assumptions of Theorem \ref{t:weak-solutions}, assume (\ref{mu-4}) and (\ref{assk}) hold. 
For all $R>0$ and $\Upsilon_0=(U_0,\Phi_0)\in\mathcal{H}^{0,1}_{\Omega,\Gamma}=\mathbb{X}^2\times\mathcal{M}^1_{\Omega,\Gamma}$ with $\|\Upsilon_0\|_{\mathcal{H}^{0,1}_{\Omega,\Gamma}}\le R$, there exist positive constants $c_0=c_0(\beta,\delta,\nu,\omega,k_\Gamma(0))$ and $P_0=P_0(\kappa_2,\kappa_4,c_0)$, and there is a positive monotonically increasing function $Q(\cdot)$ in which, for all $t\ge0$,
\begin{align}
\left\| \left( U(t),\Phi^t \right) \right\|^2_{\mathcal{H}^{0,1}_{\Omega,\Gamma}} \le Q(R) e^{-c_0t} + P_0.  \label{weak-decay} 
\end{align}
Also, 
\begin{align}
\int_0^t\|U(\tau)\|^2_{\mathbb{V}^1}{\rm d}\tau \le Q(R). \label{fus-11.5}
\end{align}
\end{lemma}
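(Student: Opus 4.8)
The plan is to run the natural energy method on the weak formulation (\ref{weak-solution-1})--(\ref{weak-solution-2.5}). I would test (\ref{weak-solution-1}) with $\Xi=U(t)$ and simultaneously pair the memory equations (\ref{weak-solution-2})--(\ref{weak-solution-2.5}) against $\Phi^t$ in $\mathcal{M}^1_{\Omega,\Gamma}$, then add the resulting identities. Writing $E(t):=\|U(t)\|^2_{\mathbb{X}^2}+\|\Phi^t\|^2_{\mathcal{M}^1_{\Omega,\Gamma}}$, the time-derivative terms assemble into $\tfrac12\tfrac{d}{dt}E(t)$. The decisive feature is the cancellation of the memory-coupling terms: after integration by parts in $\Omega$, the contribution $\int_0^\infty\mu_\Omega(s)\langle {\rm A_W^{\alpha,0,0,\omega}}\Phi^t(s),U\rangle_{\mathbb{X}^2}\,{\rm d}s$ together with $\nu\int_0^\infty\mu_\Gamma(s)\langle {\rm B}\xi^t(s),u_{\mid\Gamma}\rangle_{L^2(\Gamma)}\,{\rm d}s$ collapses to precisely $\langle U,\Phi^t\rangle_{\mathcal{M}^1_{\Omega,\Gamma}}$, which is cancelled by the identical term produced when (\ref{weak-solution-2})--(\ref{weak-solution-2.5}) are tested against $\Phi^t$. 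Because the interior coupling is weighted throughout by $\mu_\Omega$ and the boundary coupling by $\mu_\Gamma$, this cancellation persists even when $\mu_\Omega\neq\mu_\Gamma$, leaving
\[
\tfrac12\tfrac{d}{dt}E(t)+\langle {\rm A_W^{0,\beta,\nu,\omega}}U,U\rangle_{\mathbb{X}^2}+\langle F(U),U\rangle_{\mathbb{X}^2}=\langle{\rm T_r}\Phi^t,\Phi^t\rangle_{\mathcal{M}^1_{\Omega,\Gamma}}.
\]

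I would then invoke Corollary~\ref{t:operator-T-1} to replace the right-hand side by $-\tfrac{\delta}{2}\|\Phi^t\|^2_{\mathcal{M}^1_{\Omega,\Gamma}}$ (with $\delta$ from (\ref{mu-4})) and bound the coercive terms from below. The Wentzell form $\langle {\rm A_W^{0,\beta,\nu,\omega}}U,U\rangle_{\mathbb{X}^2}$ supplies $\omega\|\nabla u\|^2_{L^2(\Omega)}$ and the full tangential part $\nu\|\nabla_\Gamma u\|^2_{L^2(\Gamma)}+\nu\beta\|u\|^2_{L^2(\Gamma)}$, while the missing interior $L^2(\Omega)$ mass is recovered from the dissipativity (\ref{assm-3})--(\ref{assm-4}) via $\langle f(u),u\rangle\ge\kappa_1\|u\|^4_{L^4(\Omega)}-\kappa_2|\Omega|$ and Young's inequality $\|u\|^2_{L^2}\le\varepsilon\|u\|^4_{L^4}+C_\varepsilon$. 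Combined with the Sobolev--Poincar\'e inequality, this produces a lower bound of the form $c\|U\|^2_{\mathbb{V}^1}-C$, hence a differential inequality $\tfrac{d}{dt}E(t)+c_0E(t)\le 2P_0$.

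The step I expect to be most delicate is the treatment of the boundary memory once one passes beyond the schematic cancellation above. In reconstructing (\ref{weak-solution-1}) from the pointwise system (\ref{problemp-1})--(\ref{problemp-2}), the normal-derivative memory enters with the two distinct weights $\mu_\Omega$ and $\mu_\Gamma$, leaving an uncancelled term of the type $\int_0^\infty(\mu_\Gamma(s)-\mu_\Omega(s))\langle\partial_{\bf n}\eta^t(s),u_{\mid\Gamma}\rangle_{L^2(\Gamma)}\,{\rm d}s$ whenever $\mu_\Omega\neq\mu_\Gamma$. Estimating this by Cauchy--Schwarz in the $\mu_\Gamma$-weighted space brings in the total mass $\int_0^\infty\mu_\Gamma(s)\,{\rm d}s=(1-\omega)k_\Gamma(0)$, and it is exactly the smallness hypothesis (\ref{assk}), i.e. $(1-\omega)k_\Gamma(0)\le 4$, that permits this contribution to be absorbed into the tangential dissipation $\nu\|\nabla_\Gamma u\|^2_{L^2(\Gamma)}$ and the memory dissipation without destroying coercivity; this is the same mechanism exploited later in Lemma~\ref{t:to-H2}.

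Finally, with $\tfrac{d}{dt}E+c_0E\le 2P_0$ in hand, Gronwall's lemma gives $E(t)\le E(0)e^{-c_0t}+P_0$, and since $E(0)\le R^2$ we obtain (\ref{weak-decay}) upon taking $Q(\cdot)$ a monotone majorant of the initial energy. For the second assertion I would return to the coercive inequality $\tfrac{d}{dt}E+c\|U\|^2_{\mathbb{V}^1}\le 2P_0$ and integrate over $[0,t]$; telescoping $E$ and invoking the uniform control of $E(t)$ from (\ref{weak-decay}) then bounds the dissipation integral $\int_0^t\|U(\tau)\|^2_{\mathbb{V}^1}\,{\rm d}\tau$, yielding (\ref{fus-11.5}).
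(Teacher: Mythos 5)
Your overall skeleton (test \eqref{weak-solution-1} with $\Xi=U$ and the memory equations with $\Pi=\Phi^t$, apply Corollary \ref{t:operator-T-1}, use \eqref{assm-3}--\eqref{assm-4}, close with Gr\"onwall, then integrate the coercive inequality to get \eqref{fus-11.5}) is the same as the paper's. The gap is in the one step that makes this lemma nontrivial when $\mu_\Omega\neq\mu_\Gamma$, namely where \eqref{assk} enters. You first assert that the memory couplings cancel \emph{completely}, leaving a clean identity with no coupling terms at all --- if that were true, \eqref{assk} would be superfluous. You then patch this by introducing a residual of the form $\int_0^\infty(\mu_\Gamma(s)-\mu_\Omega(s))\langle\partial_{\bf n}\eta^t(s),u_{\mid\Gamma}\rangle_{L^2(\Gamma)}\,{\rm d}s$ and proposing to estimate it by Cauchy--Schwarz. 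This term is not the one the proof has to contend with, and it could not be estimated that way in any case: for a weak solution one only has $\Phi^t\in\mathcal{M}^1_{\Omega,\Gamma}$, so $\eta^t(s)\in H^1(\Omega)$ for a.e.\ $s$, and $\partial_{\bf n}\eta^t(s)$ is not a well-defined element of $L^2(\Gamma)$ (nor of $H^{-1/2}(\Gamma)$ without control of $\Delta\eta^t(s)$). Indeed, Lemma \ref{t:weak-ball} is a statement about solutions in the sense of Definition \ref{d:weak-solution}, and in that formulation the normal-derivative contributions are already packaged inside $\int_0^\infty\mu_\Omega(s)\langle{\rm A_W^{\alpha,0,0,\omega}}\Phi^t(s),\Xi\rangle_{\mathbb{X}^2}\,{\rm d}s$, where they cancel after integration by parts against the boundary component of the test function; no mismatch of weights on $\partial_{\bf n}\eta$ survives.

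What actually remains uncancelled in the paper's computation is the \emph{boundary} memory coupling: the interior ($\mu_\Omega$-weighted) part of $\langle U,\Phi^t\rangle_{\mathcal{M}^1_{\Omega,\Gamma}}$ coming from the memory transport equation cancels exactly against the ${\rm A_W^{\alpha,0,0,\omega}}$ term, but the boundary block leaves
\begin{equation*}
2\nu\int_0^\infty\mu_\Gamma(s)\Bigl(\|\nabla_\Gamma\xi^t(s)\|^2_{L^2(\Gamma)}+\beta\|\xi^t(s)\|^2_{L^2(\Gamma)}\Bigr){\rm d}s
-2\nu\int_0^\infty\mu_\Gamma(s)\Bigl(\langle\nabla_\Gamma u,\nabla_\Gamma\xi^t(s)\rangle_{L^2(\Gamma)}+\beta\langle u,\xi^t(s)\rangle_{L^2(\Gamma)}\Bigr){\rm d}s,
\end{equation*}
which by Young's inequality is bounded below by $-\tfrac{m_\Gamma\nu}{2}\|\nabla_\Gamma u\|^2_{L^2(\Gamma)}-\tfrac{m_\Gamma\beta\nu}{2}\|u\|^2_{L^2(\Gamma)}$ with $m_\Gamma=\int_0^\infty\mu_\Gamma(s)\,{\rm d}s=(1-\omega)k_\Gamma(0)$ (see \eqref{fus-9.1}). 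This loss is absorbed into the Wentzell coercivity $2\nu\|\nabla_\Gamma u\|^2_{L^2(\Gamma)}+2\beta\nu\|u\|^2_{L^2(\Gamma)}$, leaving the coefficient $\nu(2-m_\Gamma/2)$, and it is precisely here that \eqref{assk}, i.e.\ $m_\Gamma\le 4$, is needed to keep $c_0>0$. Note also that every quantity in this residual (tangential gradients and $L^2(\Gamma)$ norms of $\xi^t$) is controlled by the $\mathcal{M}^1_{\Omega,\Gamma}$ norm, which is why the argument closes at the weak-solution level, in contrast with your proposed normal-derivative term. The remainder of your argument (Gr\"onwall for \eqref{weak-decay}, integration of the dissipation for \eqref{fus-11.5}) is fine once this step is corrected.
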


\begin{proof}
Let $R>0$ and $\Upsilon_0=(U_0,\Phi_0)\in\mathcal{H}^{0,1}_{\Omega,\Gamma}=\mathbb{X}^2\times\mathcal{M}^1_{\Omega,\Gamma}$ be such that $\|\Upsilon_0\|_{\mathcal{H}^{0,1}_{\Omega,\Gamma}}\le R.$
From the equations (\ref{weak-solution-1})-(\ref{weak-solution-2.5}), we take the corresponding weak solution $\Xi=U(t)=(u(t),u_{\mid\Gamma}(t))^{\rm tr}$ and $\Pi(s)=\Phi^t(s)=(\eta^t(s),\xi^t(s))^{\rm tr}$ and we obtain the identities 
\begin{align}
\left\langle \partial_tU,U \right\rangle _{\mathbb{X}^2} & + \left\langle {{\rm A_W^{0,\beta,\nu,\omega}}}U,U \right\rangle_{\mathbb{X}^2} + \int_0^\infty \mu_\Omega(s)\left\langle {{\rm A_W^{\alpha,0,0,\omega}}}\Phi^t(s),U \right\rangle_{\mathbb{X}^2} {\rm d}s \notag \\
& + \nu\int_0^\infty \mu_\Gamma(s)\left\langle {\rm B}\xi^t(s),\xi^t(s) \right\rangle_{L^2(\Gamma)} {\rm d}s + \left\langle F(U),U\right\rangle_{\mathbb{X}^2}=0,  \label{fus-1}
\end{align}
\begin{equation}  \label{fus-2}
\left\langle \partial_t\eta^t,\eta^t \right\rangle _{\mathcal{M}^1_\Omega}=\left\langle {\rm T_r}\eta^t,\eta^t \right\rangle _{\mathcal{M}^1_\Omega}+\left\langle u(t),\eta^t \right\rangle _{\mathcal{M}^1_\Omega},  
\end{equation}
and
\begin{equation}  \label{fus-2.5}
\left\langle \partial_t\xi^t,\xi^t \right\rangle _{\mathcal{M}^1_\Gamma}=\left\langle {\rm T_r}\xi^t,\xi^t \right\rangle _{\mathcal{M}^1_\Gamma}+\left\langle u(t),\xi^t \right\rangle _{\mathcal{M}^1_\Gamma}.
\end{equation}
Observe,
\begin{align}
& \left\langle \partial_t U,U \right\rangle_{\mathbb{X}^2} = \frac{1}{2}\frac{{\rm d}}{{\rm d}t}\left\| U \right\|^2_{\mathbb{X}^2},  \label{fus-3} \\
& \left\langle {{\rm A_W^{0,\beta,\nu,\omega}}}U,U \right\rangle_{\mathbb{X}^2} = \omega\|\nabla u\|^2_{L^2(\Omega)}+\nu\|\nabla_\Gamma u\|^2_{L^2(\Gamma)}+\beta\nu\|u\|^2_{L^2(\Gamma)}, \label{fus-4}
\end{align}
and
\begin{align}
\left\langle \partial_t\eta^t,\eta^t \right\rangle _{\mathcal{M}^1_\Omega} + \left\langle \partial_t\xi^t,\xi^t \right\rangle _{\mathcal{M}^1_\Gamma} = \left\langle \partial_t\Phi^t,\Phi^t \right\rangle_{\mathcal{M}^1_{\Omega,\Gamma}} = \frac{1}{2}\frac{{\rm d}}{{\rm d}t}\left\| \Phi^t \right\|^2_{\mathcal{M}^1_{\Omega,\Gamma}}.  \label{fus-5}
\end{align}
Combining (\ref{fus-1})-(\ref{fus-5}) produces the differential identity, which holds for almost all $t\ge0$,
\begin{align}
& \frac{1}{2}\frac{{\rm d}}{{\rm d}t}\left\{ \left\| U \right\|^2_{\mathbb{X}^2} + \left\| \Phi^t \right\|^2_{\mathcal{M}^1_{\Omega,\Gamma}} \right\} + \omega\|\nabla u\|^2_{L^2(\Omega)} + \nu\|\nabla_\Gamma u\|^2_{L^2(\Gamma)} + \beta\nu\|u\|^2_{L^2(\Gamma)} \notag \\ 
& + \int_0^\infty \mu_\Omega(s)\left\langle {{\rm A_W^{\alpha,0,0,\omega}}}\Phi^t(s),U \right\rangle_{\mathbb{X}^2} {\rm d}s + \nu\int_0^\infty \mu_\Gamma(s)\left\langle {\rm B}\xi^t(s),\xi^t(s) \right\rangle_{L^2(\Gamma)} {\rm d}s \notag \\
&  + \left\langle F(U),U \right\rangle_{\mathbb{X}^2} - \left\langle {\rm T_r}\Phi^t,\Phi^t \right\rangle_{\mathcal{M}^1_{\Omega,\Gamma}} - \left\langle U(t),\Phi^t \right\rangle _{\mathcal{M}^1_{\Omega,\Gamma}}= 0.  \label{fus-6}
\end{align}
Because of assumption (\ref{mu-4}),  we may directly apply (\ref{operator-T-1}) from Corollary \ref{t:operator-T-1}; i.e., 
\begin{equation}  \label{fus-7}
-\left\langle {\rm T_r}\Phi^t,\Phi^t \right\rangle_{\mathcal{M}^1_{\Omega,\Gamma}} \ge \frac{\delta}{2}\left\|\Phi^t\right\|^2_{\mathcal{M}^1_{\Omega,\Gamma}}.
\end{equation}
With (\ref{assm-3}) and (\ref{assm-4}), we know  
\begin{align}
\left\langle F(U),U \right\rangle_{\mathbb{X}^2} & = \langle f(u),u\rangle_{L^2(\Omega)}+\langle \widetilde{g}(u),u\rangle_{L^2(\Gamma)} \notag \\
& \ge \kappa_1\|u\|^4_{L^4(\Omega)} + \kappa_3\|u\|^{r}_{L^r(\Gamma)} - (\kappa_2+\kappa_4).  \label{fus-8} 
\end{align}
Hence, (\ref{fus-6})-(\ref{fus-8}) yields the differential inequality,
\begin{align}
& \frac{{\rm d}}{{\rm d}t}\left\{ \left\| U \right\|^2_{\mathbb{X}^2} + \left\| \Phi^t \right\|^2_{\mathcal{M}^1_{\Omega,\Gamma}} \right\} + 2\omega\|\nabla u\|^2_{L^2(\Omega)} + 2\nu\|\nabla_\Gamma u\|^2_{L^2(\Gamma)} + 2\beta\nu\|u\|^2_{L^2(\Gamma)} \notag \\  
& + 2\int_0^\infty \mu_\Omega(s)\left\langle {{\rm A_W^{\alpha,0,0,\omega}}}\Phi^t(s),U \right\rangle_{\mathbb{X}^2} {\rm d}s + 2\nu\int_0^\infty \mu_\Gamma(s)\left\langle {\rm B}\xi^t(s),\xi^t(s) \right\rangle_{L^2(\Gamma)} {\rm d}s \notag \\
&  - 2\left\langle U(t),\Phi^t \right\rangle _{\mathcal{M}^1_{\Omega,\Gamma}} + \delta \left\|\Phi^t\right\|^2_{\mathcal{M}^1_{\Omega,\Gamma}} + 2\kappa_1\|u\|^4_{L^4(\Omega)} + 2\kappa_3\|u\|^{r}_{L^{r}(\Gamma)} \le 2\left(\kappa_2+\kappa_4\right).  \label{fus-9}
\end{align}
Recall
\begin{align*}
2\int_0^\infty \mu_\Omega(s) & \left\langle {{\rm A_W^{\alpha,0,0,\omega}}}\Phi^t(s),U \right\rangle_{\mathbb{X}^2} {\rm d}s \\
& = 2\omega\int_0^\infty \mu_\Omega(s)\left\langle -\Delta\eta^t(s)+\alpha\eta^t(s),u \right\rangle_{L^2(\Omega)}{\rm d}s + 2\omega\int_0^\infty \mu_\Omega(s)\left\langle \partial_{\bf n} \eta^t(s),u \right\rangle_{L^2(\Gamma)} {\rm d}s \\
& = 2\omega\int_0^\infty \mu_\Omega(s)\left\langle \nabla\eta^t(s),\nabla u\right\rangle_{L^2(\Omega)} ds+2\alpha\omega\int_0^\infty\mu_\Omega(s)\left\langle\eta^t(s),u \right\rangle_{L^2(\Omega)}{\rm d}s,
\end{align*}
\begin{align*}
2\nu\int_0^\infty \mu_\Gamma(s)\left\langle {\rm B}\xi^t(s),\xi^t(s) \right\rangle_{L^2(\Gamma)} {\rm d}s & = 2\nu\int_0^\infty \mu_\Gamma(s)\left\langle -\Delta_\Gamma\xi^t(s)+\beta\xi^t(s),\xi^t(s) \right\rangle_{L^2(\Gamma)} {\rm d}s \\
& = 2\nu\int_0^\infty \mu_\Gamma(s)\left\|\nabla_\Gamma\xi^t(s)\right\|^2_{L^2(\Gamma)}ds+2\beta\nu\int_0^\infty \mu_\Gamma(s)\left\|\xi^t(s)\right\|^2_{L^2(\Gamma)}{\rm d}s,
\end{align*}
and 
\begin{align*}
- 2\left\langle U(t),\Phi^t \right\rangle _{\mathcal{M}^1_{\Omega,\Gamma}} & = -2\omega \int_0^\infty \mu_\Omega(s) \langle \nabla u,\nabla \eta^t(s)\rangle_{L^2(\Omega)} {\rm d}s - 2\alpha\omega \int_0^\infty \mu_\Omega(s)  \langle u,\eta^t(s) \rangle_{L^2(\Omega)} {\rm d}s \\
& - 2\nu \int_0^\infty \mu_\Gamma(s) \langle \nabla_\Gamma u, \nabla_\Gamma\xi^t(s)\rangle_{L^2(\Gamma)} {\rm d}s -  2\beta\nu \int_0^\infty \mu_\Gamma(s)  \langle u,\xi^t(s) \rangle_{L^2(\Gamma)} {\rm d}s.
\end{align*}
Thus,
\begin{align}
& 2\int_0^\infty \mu_\Omega(s) \left\langle {{\rm A_W^{\alpha,0,0,\omega}}}\Phi^t(s),U \right\rangle_{\mathbb{X}^2} {\rm d}s + 2\nu\int_0^\infty \mu_\Gamma(s)\left\langle {\rm B}\xi^t(s),\xi^t(s) \right\rangle_{L^2(\Gamma)} {\rm d}s - 2\left\langle U(t),\Phi^t \right\rangle _{\mathcal{M}^1_{\Omega,\Gamma}} \notag \\
& = 2\nu\int_0^\infty \mu_\Gamma(s)\left\|\nabla_\Gamma\xi^t(s)\right\|^2_{L^2(\Gamma)}ds+2\beta\nu\int_0^\infty \mu_\Gamma(s)\left\|\xi^t(s)\right\|^2_{L^2(\Gamma)}{\rm d}s \notag \\
& - 2\nu \int_0^\infty \mu_\Gamma(s) \langle \nabla_\Gamma u, \nabla_\Gamma\xi^t(s)\rangle_{L^2(\Gamma)} {\rm d}s -  2\beta\nu \int_0^\infty \mu_\Gamma(s)  \langle u,\xi^t(s) \rangle_{L^2(\Gamma)} {\rm d}s \notag \\
& \ge -\frac{\nu}{2}\int_0^\infty\mu_\Gamma(s)\left\|\nabla_\Gamma u\right\|^2_{L^2(\Gamma)}{\rm d}s -\frac{\beta\nu}{2}\int_0^\infty\mu_\Gamma(s)\left\|u\right\|^2_{L^2(\Gamma)}{\rm d}s \notag \\
& = -m_\Gamma\frac{\nu}{2} \left\|\nabla_\Gamma u\right\|^2_{L^2(\Gamma)} - m_\Gamma\frac{\beta\nu}{2} \left\|u\right\|^2_{L^2(\Gamma)}, \label{fus-9.1}
\end{align}
where we defined
\begin{align*}
m_\Gamma & =\int_0^\infty\mu_\Gamma(s){\rm d}s \\
& = -(1-\omega)\int_0^\infty k'_\Gamma(s){\rm d}s \\
& = -(1-\omega)\left( \lim_{\sigma\rightarrow+\infty}k_\Gamma(\sigma)-k_\Gamma(0) \right) \\
& = (1-\omega)k_\Gamma(0).
\end{align*}
Together (\ref{fus-9}) and (\ref{fus-9.1}) yield
\begin{align}
& \frac{{\rm d}}{{\rm d}t}\left\{ \left\| U \right\|^2_{\mathbb{X}^2} + \left\| \Phi^t \right\|^2_{\mathcal{M}^1_{\Omega,\Gamma}} \right\} + 2\omega\|\nabla u\|^2_{L^2(\Omega)} + \nu\left(2-\frac{m_\Gamma}{2}\right)\|\nabla_\Gamma u\|^2_{L^2(\Gamma)}  \notag \\
& + \beta\nu\left(2-\frac{m_\Gamma}{2}\right) \|u\|^2_{L^2(\Gamma)} + \delta \left\|\Phi^t\right\|^2_{\mathcal{M}^1_{\Omega,\Gamma}} + 2\kappa_1\|u\|^4_{L^4(\Omega)} + 2\kappa_3\|u\|^{r}_{L^{r}(\Gamma)} \notag \\
& \le 2\left(\kappa_2+\kappa_4\right).  \label{fus-9.2}
\end{align}
Thanks to assumption (\ref{assk}), we set 
\[
c_0:=\min\left\{2\omega,\beta\nu\left(2-\frac{m_\Gamma}{2}\right),\delta\right\}>0,
\]
so that we can write (\ref{fus-9.2}) as the differential inequality, which holds for almost all $t\ge0,$
\begin{align}
& \frac{{\rm d}}{{\rm d}t}\left\{ \left\| U \right\|^2_{\mathbb{X}^2} + \left\| \Phi^t \right\|^2_{\mathcal{M}^1_{\Omega,\Gamma}} \right\} + c_0 \left( \left\| U \right\|^2_{\mathbb{V}^1} + \left\|\Phi^t\right\|^2_{\mathcal{M}^1_{\Omega,\Gamma}} \right) \notag \\
&  + 2\kappa_1\|u\|^4_{L^4(\Omega)} + 2\kappa_3\|u\|^{r}_{L^{r}(\Gamma)} \le C,  \label{fus-10}
\end{align}
where $C>0$ depends only on the constants $\kappa_2$ and $\kappa_4$.
Here we can apply the embedding $\mathbb{V}^1\hookrightarrow\mathbb{X}^2$ and proceed to further apply a suitable Gr\"{o}nwall inequality to (\ref{fus-10}) so that the estimate (\ref{weak-decay}) follows with $\nu_0=c_0$ and $P_0=\frac{C}{c_0};$ indeed, after updating $c_0$ to include the embedding constant $\|U\|^2_{\mathbb{X}^2} \le C_{\overline{\Omega}}\|U\|^2_{\mathbb{V}^1}$, (\ref{fus-10}) yields, for all $t\ge 0,$
\begin{align}
\left\| U(t) \right\|^2_{\mathbb{X}^2} + \left\| \Phi^t \right\|^2_{\mathcal{M}^1_{\Omega,\Gamma}} & \le e^{-c_0 t} \left( \left\| U_0 \right\|^2_{\mathbb{X}^2} + \left\| \Phi_0 \right\|^2_{\mathcal{M}^1_{\Omega,\Gamma}} \right) + P_0  \notag \\
& \le R^2e^{-c_0 t} + P_0,  \label{fus-11}
\end{align}
where the last inequality follows from the fact that $\|\Upsilon_0\|_{\mathcal{H}^{0,1}_{\Omega,\Gamma}} \le R$.
(Also, the absolute continuity of the mapping $t\mapsto \left\| U(t) \right\|^2_{\mathbb{X}^2}+\left\| \Phi^t \right\|^2_{\mathcal{M}^1_{\Omega,\Gamma}}$ can be established as in \cite[Lemma III.1.1]{Temam88}, for example.)
The desired estimate (\ref{weak-decay}) follows from (\ref{fus-11}).
By integrating (\ref{fus-10}) over $(0,t)$ while keeping in mind the bound (\ref{fus-11}), we also find (\ref{fus-11.5}).

The existence of the bounded set $\mathcal{B}^0$ in $\mathcal{H}^{0,1}_{\Omega,\Gamma}$ that is absorbing and positively invariant for $\mathcal{S}(t)$ follows from (\ref{weak-decay}).
Indeed, define
\begin{equation*}
\mathcal{B}^0:=\left\{ (U,\Phi)\in \mathcal{H}^{0,1}_{\Omega,\Gamma} : \left\| (U,\Phi) \right\|_{\mathcal{H}^{0,1}_{\Omega,\Gamma}} \le \sqrt{P_0+1} \right\}.
\end{equation*}
Given any nonempty bounded subset $B$ in $\mathcal{H}^{0,1}_{\Omega,\Gamma}\setminus\mathcal{B}^0$, then we have that $\mathcal{S}(t)B\subseteq\mathcal{B}^0$, in $\mathcal{H}^{0,1}_{\Omega,\Gamma}$, for all $t\ge t_0$, where 
\begin{equation} \label{time0}
t_0=t_0(R):=\max\{\ln(Q(R))/c_0,0\}.
\end{equation}
This finishes the proof.
\end{proof}

\begin{corollary}
From (\ref{weak-decay}) it follows that any weak solution $(U(t),\Phi^t)$ to Problem {\textbf{P}} according to Definition \ref{d:weak-solution} is bounded uniformly in $t$. 
Indeed, for all $\Upsilon_0\in\mathcal{H}^{0,1}_{\Omega,\Gamma}$ in which $\|\Upsilon_0\|_{\mathcal{H}^{0,1}_{\Omega,\Gamma}}\le R$ for some $R>0$,
\begin{equation} \label{weak-bound}
\limsup_{t\rightarrow+\infty} \left\|\mathcal{S}(t)\Upsilon_0\right\|_{\mathcal{H}^{0,1}_{\Omega,\Gamma}} \le Q(R).
\end{equation}
\end{corollary}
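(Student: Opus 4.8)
The plan is to read the statement off directly from the dissipation estimate (\ref{weak-decay}) proved in Lemma \ref{t:weak-ball}. Since the claim concerns only the asymptotic size of a single orbit, no new energy computation is needed: the entire content is an elementary limit argument applied to the already-established exponential decay bound, together with the bookkeeping convention for the generic function $Q(\cdot)$.

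First I would fix $\Upsilon_0=(U_0,\Phi_0)\in\mathcal{H}^{0,1}_{\Omega,\Gamma}$ with $\|\Upsilon_0\|_{\mathcal{H}^{0,1}_{\Omega,\Gamma}}\le R$ and recall that, by the definition of the semigroup in Corollary \ref{t:sg}, one has $\mathcal{S}(t)\Upsilon_0=(U(t),\Phi^t)$, where $(U,\Phi)$ is the unique weak solution issuing from $\Upsilon_0$. With this identification, estimate (\ref{weak-decay}) reads
\begin{equation*}
\left\|\mathcal{S}(t)\Upsilon_0\right\|^2_{\mathcal{H}^{0,1}_{\Omega,\Gamma}} \le Q(R)e^{-c_0 t} + P_0 \quad \text{for all}\ t\ge 0,
\end{equation*}
where $c_0>0$ and $P_0=P_0(\kappa_2,\kappa_4,c_0)$ is a constant \emph{independent} of $R$.

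Next I would pass to the limit superior as $t\to+\infty$. Because $c_0>0$, the first summand $Q(R)e^{-c_0 t}$ tends to zero, so that
\begin{equation*}
\limsup_{t\to+\infty}\left\|\mathcal{S}(t)\Upsilon_0\right\|^2_{\mathcal{H}^{0,1}_{\Omega,\Gamma}} \le P_0.
\end{equation*}
Taking square roots and using the monotonicity and continuity of $s\mapsto\sqrt{s}$ yields $\limsup_{t\to+\infty}\|\mathcal{S}(t)\Upsilon_0\|_{\mathcal{H}^{0,1}_{\Omega,\Gamma}}\le\sqrt{P_0}$.

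Finally I would absorb $\sqrt{P_0}$ into the generic monotonically increasing function $Q(\cdot)$: since $\sqrt{P_0}$ is a fixed number depending only on the structural constants, and $Q$ is increasing, after possibly enlarging $Q$ we have $\sqrt{P_0}\le Q(R)$, which is precisely (\ref{weak-bound}). I do not expect any genuine obstacle here; the only point deserving a word of care is this harmless relabelling of $Q$, which is consistent with the convention fixed before Section~2 that $Q(\cdot)$ denotes a generic increasing function whose value may change from line to line.
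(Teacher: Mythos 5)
Your proposal is correct and coincides with what the paper intends: the corollary is stated without a separate proof precisely because it is read directly off the decay estimate (\ref{weak-decay}) by letting $t\to+\infty$, exactly as you do. Your added observation that the limit superior is in fact bounded by $\sqrt{P_0}$, independently of $R$, before being absorbed into the generic $Q(R)$, is a harmless (and slightly sharper) restatement consistent with the paper's conventions.
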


We conclude this section with a local Lipschitz continuity result for the semiflow generated by the weak solutions of Problem {\textbf P}, here defined on {\em compact} time intervals and in the weaker energy space $\mathcal{H}^{-1,0}_{\Omega,\Gamma}=\mathbb{V}^{-1}\times\mathcal{M}^0_{\Omega,\Gamma}$.
(Recall this space was defined in (\ref{spaces}).)

The following result is based on \cite[Proposition 2.2]{Pata&Zelik06-2}.

\begin{theorem}  \label{t:weak-cont}
Let $T>0$ and $\Upsilon_{01}=(U_{01},\Phi_{01}),\Upsilon_{02}=(U_{02},\Phi_{02})\in\mathcal{B}^0$ (the bounded absorbing set in $\mathcal{H}^{0,1}_{\Omega,\Gamma}$).
Any two global weak solutions, $\Upsilon^1(t)$ and $\Upsilon^2(t)$, to Problem {\textbf{P}} corresponding to the initial datum $\Upsilon_{01}$ and $\Upsilon_{02}$, respectively, satisfy, for all $t\in[0,T]$, 
\begin{equation}  \label{lowc-1}
\|\Upsilon^1(t)-\Upsilon^2(t)\|_{\mathcal{H}^{-1,0}_{\Omega,\Gamma}} \le e^{(CT)t} \|\Upsilon_{01} - \Upsilon_{02}\|_{\mathcal{H}^{-1,0}_{\Omega,\Gamma}}
\end{equation}
for some constant $C>0$.
\end{theorem}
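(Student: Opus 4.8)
The plan is to run a difference estimate directly in the weak spaces, following the scheme of \cite[Proposition 2.2]{Pata&Zelik06-2}. Write $\bar U = U^1-U^2$ and $\bar\Phi=\Phi^1-\Phi^2=(\bar\eta,\bar\xi)^{\rm tr}$, and subtract the two weak formulations (\ref{weak-solution-1})--(\ref{weak-solution-2.5}) satisfied by $\Upsilon^1$ and $\Upsilon^2$. Since both initial data lie in $\mathcal{B}^0$, Lemma \ref{t:weak-ball} supplies the two a priori ingredients on which the whole argument rests: a uniform-in-time bound $\|U^i(t)\|_{\mathbb{X}^2}\le C$ for all $t\ge0$, and the time-integrated estimate (\ref{fus-11.5}), namely $\int_0^T\|U^i(\tau)\|^2_{\mathbb{V}^1}\,{\rm d}\tau\le Q(R)$.

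To produce the $\mathbb{V}^{-1}$-norm I would test the difference of (\ref{weak-solution-1}) with $\Xi={\rm A}^{-1}\bar U$, where $\mathrm{A}:={\rm A_W^{\alpha,\beta,\nu,\omega}}$ is the positive self-adjoint Wentzell operator realizing the duality $\mathbb{V}^{-1}=(\mathbb{V}^1)'$, so that ${\rm A}^{-1}:\mathbb{V}^{-1}\to\mathbb{V}^1$ is bounded (and compact). This turns the time-derivative term into $\tfrac12\tfrac{d}{dt}\|\bar U\|^2_{\mathbb{V}^{-1}}$ and the principal elliptic term $\langle{\rm A_W^{0,\beta,\nu,\omega}}\bar U,{\rm A}^{-1}\bar U\rangle_{\mathbb{X}^2}$ into a coercive contribution bounded below by $\|\bar U\|^2_{\mathbb{X}^2}$ up to the harmless $\alpha\omega$ interior-mass mismatch. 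Simultaneously I would test the difference of the memory equations (\ref{weak-solution-2})--(\ref{weak-solution-2.5}) against $\bar\Phi$ in the $\mathcal{M}^0_{\Omega,\Gamma}$ inner product, producing $\tfrac12\tfrac{d}{dt}\|\bar\Phi\|^2_{\mathcal{M}^0_{\Omega,\Gamma}}$ and, via the $\mathcal{M}^0$-analogue of (\ref{operator-T-1}) (which follows verbatim from (\ref{mu-4}) by integrating in $s$ and using $\bar\Phi(0)=0$), the dissipative term $-\tfrac\delta2\|\bar\Phi\|^2_{\mathcal{M}^0_{\Omega,\Gamma}}$. The surviving memory-coupling integrals $\int_0^\infty\mu_\Omega(s)\langle{\rm A_W^{\alpha,0,0,\omega}}\bar\Phi(s),{\rm A}^{-1}\bar U\rangle_{\mathbb{X}^2}{\rm d}s$, the boundary term with ${\rm B}\bar\xi$, and the source $\langle\bar U,\bar\Phi\rangle_{\mathcal{M}^0_{\Omega,\Gamma}}$ are routine: moving the differential operators onto the smooth factor ${\rm A}^{-1}\bar U\in\mathbb{V}^1$ by self-adjointness, applying Cauchy--Schwarz in the weighted spaces, and using Young's inequality splits each against the two good quantities $\|\bar U\|^2_{\mathbb{X}^2}$ and $\tfrac\delta2\|\bar\Phi\|^2_{\mathcal{M}^0_{\Omega,\Gamma}}$, since every such term pairs a memory factor controlled in $\mathcal{M}^0_{\Omega,\Gamma}$ against $\bar U$ controlled in $\mathbb{X}^2$.

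\textbf{The main obstacle is the nonlinear term} $\langle F(U^1)-F(U^2),{\rm A}^{-1}\bar U\rangle_{\mathbb{X}^2}$, which must be controlled in the weak topology using only the subcritical growth (\ref{assm-1})--(\ref{assm-2}) and the a priori bounds above. I would write $|f(u^1)-f(u^2)|\lesssim(1+|u^1|^2+|u^2|^2)|\bar u|$ and the analogous bound for $\widetilde g$ on $\Gamma$, then exploit the embeddings available in dimension $n=3$ ($\mathbb{V}^1\hookrightarrow L^6(\Omega)$, ${\rm A}^{-1}\bar U\in\mathbb{V}^1\hookrightarrow L^6$, and $H^1(\Gamma)\hookrightarrow L^q(\Gamma)$ for all finite $q$) to estimate the pairing by $C\,a(t)\,\|\bar U\|_{\mathbb{X}^2}\,\|\bar U\|_{\mathbb{V}^{-1}}$, with $a(t)=1+\|U^1(t)\|^2_{\mathbb{V}^1}+\|U^2(t)\|^2_{\mathbb{V}^1}$. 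The delicate point is to arrange the H\"older and interpolation budget (interpolating the strong factor $\|\bar U\|_{\mathbb{X}^2}$ between $\mathbb{V}^{-1}$ and $\mathbb{V}^1$ and absorbing it into the coercive $\|\bar U\|^2_{\mathbb{X}^2}$ generated by the ${\rm A}^{-1}$-test) so that what remains multiplying $\|\bar U\|^2_{\mathbb{V}^{-1}}$ is a Gr\"onwall coefficient that is merely \emph{integrable} on $[0,T]$; this is exactly where (\ref{fus-11.5}) is indispensable, as it dominates $\int_0^T a(\tau)\,{\rm d}\tau$ by a constant multiple of $T$ depending only on the fixed radius $R$ of $\mathcal{B}^0$. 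I regard keeping this coefficient in $L^1(0,T)$, rather than $L^2$, as the technical heart of the proof, and would lean on the bookkeeping of \cite[Proposition 2.2]{Pata&Zelik06-2} for the precise constants.

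Collecting all contributions then yields, for almost every $t\in[0,T]$, a differential inequality of the form $\tfrac{d}{dt}\Lambda(t)\le C\,a(t)\,\Lambda(t)$, where $\Lambda(t):=\|\bar U(t)\|^2_{\mathbb{V}^{-1}}+\|\bar\Phi^t\|^2_{\mathcal{M}^0_{\Omega,\Gamma}}=\|\Upsilon^1(t)-\Upsilon^2(t)\|^2_{\mathcal{H}^{-1,0}_{\Omega,\Gamma}}$. Gr\"onwall's inequality gives $\Lambda(t)\le\Lambda(0)\exp\!\big(C\int_0^t a(\tau)\,{\rm d}\tau\big)$, and bounding the exponent uniformly for $t\in[0,T]$ by a rate proportional to $T$ (the fixed contribution $Q(R)$ from (\ref{fus-11.5}) being absorbed into the constant) produces, after taking square roots and renaming $C$, the claimed estimate (\ref{lowc-1}).
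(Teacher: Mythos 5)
Your outline takes a genuinely different route from the paper: you test the difference of (\ref{weak-solution-1}) with ${\rm A}^{-1}\bar U$ so as to produce $\tfrac{d}{dt}\|\bar U\|^2_{\mathbb{V}^{-1}}$ directly, whereas the paper follows \cite[Proposition 2.2]{Pata&Zelik06-2} by passing to the anti-derivatives $\widetilde\chi(t)=\int_0^t\bar U(\tau)\,{\rm d}\tau$ and $\widetilde\Psi^t=\int_0^t\bar\Phi^\tau\,{\rm d}\tau$, and testing the time-integrated system with $(\widetilde\chi,\widetilde\Psi)$ itself. This difference is not cosmetic; it is exactly where your argument has a gap.

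The gap is the nonlinear term, which you have correctly located but not closed. Testing with ${\rm A}^{-1}\bar U$ yields coercivity only at the $\mathbb{X}^2$ level, since $\langle{\rm A_W^{0,\beta,\nu,\omega}}\bar U,{\rm A}^{-1}\bar U\rangle_{\mathbb{X}^2}$ controls $\|\bar U\|^2_{\mathbb{X}^2}$ but no $\|\bar U\|^2_{\mathbb{V}^1}$ appears on the left-hand side of your energy identity. Your estimate $|\langle F(U^1)-F(U^2),{\rm A}^{-1}\bar U\rangle_{\mathbb{X}^2}|\le C\,a(t)\,\|\bar U\|_{\mathbb{X}^2}\|\bar U\|_{\mathbb{V}^{-1}}$ with $a(t)=1+\|U^1(t)\|^2_{\mathbb{V}^1}+\|U^2(t)\|^2_{\mathbb{V}^1}$ then forces Young's inequality in the form $\epsilon\|\bar U\|^2_{\mathbb{X}^2}+C_\epsilon\,a(t)^2\|\bar U\|^2_{\mathbb{V}^{-1}}$, and the resulting Gr\"onwall coefficient $a(t)^2$ requires $U^i\in L^4(0,T;\mathbb{V}^1)$, which neither (\ref{fus-11.5}) nor the regularity class of Definition \ref{d:weak-solution} supplies (one only has $L^2(0,T;\mathbb{V}^1)$, $L^\infty(0,T;\mathbb{X}^2)$ and $u\in L^4(\Omega\times(0,T))$). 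The interpolation of $\|\bar U\|_{\mathbb{X}^2}$ between $\mathbb{V}^{-1}$ and $\mathbb{V}^1$ that you propose as the escape does not help, because the $\|\bar U\|^{1/2}_{\mathbb{V}^1}$ factor it produces has nothing coercive to be absorbed into; the same obstruction reappears under every H\"older splitting compatible with the available space-time integrability. This is precisely the difficulty the anti-derivative device removes: in the paper's proof the nonlinearity is paired against $\widetilde\chi(t)$, which lives at the $\mathbb{V}^1$ level, and the full term $\|\widetilde\chi(t)\|^2_{\mathbb{V}^1}$ is present on the left of (\ref{pp11}) to absorb it (see (\ref{wkea16})), while the weak norm of the initial difference enters through $\|\widetilde\Upsilon_0\|_{\mathcal{H}^{-1,0}}$. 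As written, your differential inequality $\tfrac{d}{dt}\Lambda\le C\,a(t)\,\Lambda$ with $a\in L^1(0,T)$ is not actually derived, so the proof does not close; to repair it you would need either to adopt the time-integration transformation or to assume stronger integrability of the solutions than the hypotheses provide.
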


\begin{proof} 
Let $T>0.$
Let $\Upsilon_{01},\Upsilon_{02}\in\mathcal{B}^0\subset\mathcal{H}^{0,1}_{\Omega,\Gamma}$ and let $\Upsilon^1(t)=(U_1(t),\Phi^t_1)$ and $\Upsilon^2(t)=(U_2(t),\Phi^t_2)$, respectively, be the corresponding weak solutions.
Set ${\widetilde{U}}(t)=U_1(t)-U_2(t)$ and ${\widetilde{\Phi }}^t=\Phi _1^t-\Phi _2^t$. 
The function ${\widetilde{\Upsilon}}(t)=({\widetilde{U}}(t),{\widetilde{\Phi }}^t)$ satisfies the equations
\begin{align}
& \left\langle \partial_t\widetilde{U}(t),V\right\rangle _{\mathbb{X}^2}+\left\langle {\rm A_W^{0,\beta,\nu,\omega}}\widetilde{U}(t),V\right\rangle_{\mathbb{X}^2}+\left\langle
F(U_1(t))-F(U_2(t)),V\right\rangle_{\mathbb{X}^2}  \notag \\
& +\int_0^\infty\mu_\Omega(s) \left\langle {\rm A_W^{\alpha,0,0,\omega}}\widetilde{\Phi}^t(s),V\right\rangle_{\mathbb{X}^2}ds+\nu \int_0^\infty\mu_\Gamma(s)\left\langle {\rm C}\widetilde{\xi}^t(s),v_{\mid\Gamma}\right\rangle_{L^2(\Gamma)}{\rm d}s =0  \label{pp7}
\end{align}
and
\begin{equation}
\left\langle \partial_t\widetilde{\Phi}^t(s) -{\rm T_r}\widetilde{\Phi}^t(s)-\widetilde{U}(t),\Pi \right\rangle_{\mathcal{M}_{\Omega,\Gamma }^1}=0,  \label{pp8}
\end{equation}
for all $(V,\Pi) \in (\mathbb{V}^1\oplus (L^3(\Omega)\times L^{r}(\Gamma))) \times \mathcal{M}_{\Omega,\Gamma }^1$, subject to the associated initial conditions
\begin{equation*}
\widetilde{U}(0)=U_1(0) - U_2(0) \quad \text{and} \quad \widetilde{\Phi }^0=\Phi_1^0 -\Phi_2^0.
\end{equation*}

Let 
\begin{equation*}
{\widetilde{\chi}}(t):=\int_0^t {\widetilde{U}}(\tau) {\rm d}\tau \quad \text{and} \quad {\widetilde{\Psi}}^t(s):=\int_0^t {\widetilde{\Phi}}^\tau(s){\rm d}\tau \quad \text{with} \quad {\widetilde{\psi}}^t(s):=\int_0^t {\widetilde{\xi}}^\tau(s){\rm d}\tau.   \label{transformation}
\end{equation*}
Integrating equations (\ref{pp7}) and (\ref{pp8}) over $(0,t)$, we find the transformed equations
\begin{align}
& \left\langle \partial_t{\widetilde{\chi}}(t),V\right\rangle_{\mathbb{X}^2}+\left\langle {\rm A_W^{0,\beta,\nu,\omega}}{\widetilde{\chi}}(t),V\right\rangle_{\mathbb{X}^2} + \int_0^t \left\langle
F(U_1(\tau))-F(U_2(\tau)),V\right\rangle_{\mathbb{X}^2} {\rm d}\tau  \notag \\
& +\int_0^\infty \mu_\Omega(s) \left\langle {\rm A_W^{\alpha,0,0,\omega}}\widetilde{\Psi}^t(s),V\right\rangle_{\mathbb{X}^2}ds+\nu \int_0^\infty \mu_\Gamma(s)\left\langle {\rm B}\widetilde{\psi}^t(s),v_{\mid\Gamma}\right\rangle_{L^2(\Gamma)}{\rm d}s =0  \label{pp9}
\end{align}
and
\begin{equation}
\left\langle \partial_t\widetilde{\Psi}^t(s) -{\rm T_r}\widetilde{\Psi}^t(s)-\widetilde{\chi}(t),\Pi \right\rangle_{\mathcal{M}_{\Omega,\Gamma}^1}=0.  \label{pp10}
\end{equation}
Now we choose in (\ref{pp9}) and (\ref{pp10}) $V=\widetilde{\chi}(t)$ and, respectively, $\Pi=\widetilde{\Psi}^t$ to obtain the identities
\begin{align}
& \frac{1}{2}\frac{{\rm d}}{{\rm d}t}\|\widetilde{\chi}(t)\|^2_{\mathbb{X}^2}+\|\widetilde{\chi}(t)\|^2_{\mathbb{V}^1}+\int_0^t\langle F(U_1(\tau))-F(U_2(\tau)),\widetilde{\chi}(t)\rangle_{\mathbb{X}^2}{\rm d}\tau \notag \\
& + \int_0^\infty\mu_\Omega(s)\langle {{\rm A_W^{\alpha,0,0,\omega}}}{\widetilde{\Psi}}^t(s),\widetilde{\chi}(t) \rangle_{\mathbb{X}^2}ds+\nu \int_0^\infty\mu_\Gamma(s)\left\langle {\rm B}\widetilde{\psi}^t(s),\widetilde{\chi}^t_{\mid\Gamma}(s)\right\rangle_{L^2(\Gamma)}{\rm d}s =0 \label{pp9.1}
\end{align}
and
\begin{equation}
\frac{1}{2}\frac{{\rm d}}{{\rm d}t}\|\widetilde{\Psi}^t\|^2_{\mathcal{M}^1_{\Omega,\Gamma}}-\left\langle {\rm T_r}\widetilde{\Psi}^t(s),\widetilde{\Psi}^t \right\rangle_{\mathcal{M}_{\Omega,\Gamma}^1}=\left\langle\widetilde{\chi}(t),\widetilde{\Psi}^t \right\rangle_{\mathcal{M}_{\Omega,\Gamma}^1}.  \label{pp10.1}
\end{equation}
Together, (\ref{pp9.1}) and (\ref{pp10.1}) readily become the differential inequality, which holds for almost all $t\ge0$, 
\begin{align}
& \frac{{\rm d}}{{\rm d}t}\left\{ \|\widetilde{\chi}(t)\|^2_{\mathbb{X}^2} + \|\widetilde{\Psi}^t\|^2_{\mathcal{M}^1_{\Omega,\Gamma}} \right\} + 2\|\widetilde{\chi}(t)\|^2_{\mathbb{V}^1} + 2\left\langle \int_0^t F(U_1(\tau))-F(U_2(\tau)){\rm d}\tau,\widetilde{\chi}(t)\right\rangle_{\mathbb{X}^2} \le 0.     \label{pp11}
\end{align}
Since the nonlinear terms $f$ and $g$ satisfying (\ref{assm-1}) and (\ref{assm-2}) are Lipschitz on the absorbing set $\mathcal{B}^0$ (cf. e.g., \cite[Lemma 2.6]{Graber-Shomberg-16}), we have the following estimate
\begin{align}
& 2\left|\left\langle \int_0^t F(U_1(\tau))-F(U_2(\tau)){\rm d}\tau, \widetilde{\chi}(t)\right\rangle_{\mathbb{X}^2}\right|  \notag \\
& \le 2 \left|\left\langle \int_0^t f(u_1(\tau))-f(u_2(\tau)) {\rm d}\tau,\widetilde\chi(t) \right\rangle_{L^2(\Omega)}\right| {\rm d}\sigma + 2 \left|\left\langle \int_0^t g(u_1(\tau))-g(u_2(\tau)) {\rm d}\tau,\widetilde\chi(t) \right\rangle_{L^2(\Gamma)}\right|   \notag \\ 
& \le C \left( \int_0^t \|\widetilde{u}(\tau)\|_{L^2(\Omega)} {\rm d}\tau \right) \|\widetilde\chi(t)\|_{L^2(\Omega)} + C \left( \int_0^t \|\widetilde{u}(\tau)\|_{L^2(\Gamma)} {\rm d}\tau \right) \|\widetilde\chi(t)\|_{L^2(\Gamma)}  \notag \\ 
& \le C_\iota \left( \int_0^t \|\widetilde{u}(\tau)\|_{L^2(\Omega)} {\rm d}\tau \right)^2 + \iota\|\widetilde\chi(t)\|^2_{L^2(\Omega)} + C_\iota \left( \int_0^t \|\widetilde{u}(\tau)\|_{L^2(\Gamma)} {\rm d}\tau \right)^2 + \iota\|\widetilde\chi(t)\|^2_{L^2(\Gamma)} \quad (\forall\iota>0) \notag \\ 
& \le C_\iota \left( \int_0^t \|\partial_t\widetilde{\chi}(\tau)\|_{\mathbb{X}^2} {\rm d}\tau \right)^2 + \iota\|\widetilde\chi(t)\|^2_{\mathbb{X}^2}  \notag \\ 
& \le C_\iota \int_0^t \|\widetilde{\Upsilon}(\tau)\|^2_{\mathcal{H}^{0,1}_{\Omega,\Gamma}} {\rm d}\tau + \iota C \|\widetilde\chi(t)\|^2_{\mathbb{V}^1}.  \notag \\
& \le C_\iota T \|\widetilde{\Upsilon}(t)\|^2_{\mathcal{H}^{0,1}_{\Omega,\Gamma}} + \iota C \|\widetilde\chi(t)\|^2_{\mathbb{V}^1}.  \label{wkea16}
\end{align}
The constant $C_\iota>0$ satisfies $C_\iota\sim\iota^{-1}$.
With (\ref{wkea16}), the inequality (\ref{pp11}) becomes, for a suitably small $\iota>0,$
\begin{align}
& \frac{{\rm d}}{{\rm d}t}\|\widetilde{\Upsilon}(t)\|^2_{\mathcal{H}^{0,1}_{\Omega,\Gamma}} + \|\widetilde{\chi}(t)\|^2_{\mathbb{V}^1} \le CT \|\widetilde{\Upsilon}(t)\|^2_{\mathcal{H}^{0,1}_{\Omega,\Gamma}} .     \label{pp12}
\end{align}
Now integrating (\ref{pp12}) over $(0,t)$ yields, for all $t\in(0,T),$
\begin{align}
\|\widetilde{\Upsilon}(t)\|^2_{\mathcal{H}^{0,1}_{\Omega,\Gamma}} + \int_0^t\|\widetilde{\chi}(\tau)\|^2_{\mathbb{V}^1}{\rm d}\tau & \le \|\widetilde{\Upsilon}(0)\|^2_{\mathcal{H}^{0,1}_{\Omega,\Gamma}}  + CT \int_0^t \|\widetilde{\Upsilon}(\tau)\|^2_{\mathcal{H}^{0,1}_{\Omega,\Gamma}} {\rm d}\tau.  \label{wkea155}
\end{align}
Observe, 
\begin{align}
\|\widetilde\Upsilon(0)\|^2_{\mathcal{H}^{0,1}_{\Omega,\Gamma}} & = \|(\widetilde{U}(0),\widetilde{\Psi}^0)\|^2_{\mathcal{H}^{0,1}_{\Omega,\Gamma}}  \notag \\
& = \|\widetilde{U}_0\|^2_{\mathbb{X}^2}+\|\widetilde{\Psi}_0\|^2_{\mathcal{M}^1_{\Omega,\Gamma}}  \notag \\
& \le \|\widetilde{\Upsilon}_0\|^2_{\mathcal{H}^{-1,0}_{\Omega,\Gamma}}.  \label{wkea156}
\end{align}
Thus, (\ref{wkea155})--(\ref{wkea156}) become
\begin{align}
\|\widetilde{\Upsilon}(t)\|^2_{\mathcal{H}^{0,1}_{\Omega,\Gamma}} + \int_0^t\|\widetilde{\chi}(\tau)\|^2_{\mathbb{V}^1}{\rm d}\tau & \le \|\widetilde{\Upsilon}_0\|^2_{\mathcal{H}^{-1,0}_{\Omega,\Gamma}} + CT \int_0^t \|\widetilde{\Upsilon}(\tau)\|^2_{\mathcal{H}^{0,1}_{\Omega,\Gamma}} {\rm d}\tau.  \label{wkea17}
\end{align}
Finally, omitting the positive integral on the left-hand side of (\ref{wkea17}) and applying the integral form of Gr\"{o}nwall's lemma to the result produces the claim (\ref{lowc-1}).
\end{proof}

\section{Weak exponential attractors} \label{s:wkea}

This section is motivated by \cite[\S4]{Pata&Zelik06-2}.
In this section we show the existence of a so-called {\em{weak}} exponential attractor. 
We seek a weak exponential attractor in place of the standard one because of the issue raised by Remark \ref{r:trace-map}. 
In order to obtain the desired compactness from the attractor, we need to rely on the compactness of the embedding $\mathcal{K}^1_{\Omega,\Gamma}\hookrightarrow\mathcal{M}^0_{\Omega,\Gamma}.$
Hence, we rely on the phase space $\widehat{\mathcal{H}}^{0,1}_{\Omega,\Gamma} = \mathbb{X}^2\times\mathcal{K}^1_{\Omega,\Gamma}$ and its compact injection into the weak topology of $\mathcal{H}^{0,1}_{\Omega,\Gamma}$, that is the space $\mathcal{H}^{-1,0}_{\Omega,\Gamma}=\mathbb{V}^2\times(\mathcal{M}^2_{\Omega,\Gamma}\cap {\rm D(T_r)})$ described above in (\ref{spaces}).
In order to provide such precompact trajectories, we need to bound the memory terms in the more regular space $\mathcal{K}^1_{\Omega,\Gamma}$, and this requires us to satisfy the shared hypothesis of Lemmata \ref{what-1} and \ref{what-2} above; that is, $\|U(t)\|_{\mathbb{V}^1}\le K$, for some constant $K>0$ for all $t\ge0$. 
This in turn requires the use of the so-called quasi-strong solutions (see Definition \ref{d:strong-solution}).
Hence, trajectories with data in 
\[
\mathbb{H}^2:=\mathbb{V}^2\times \left(\mathcal{M}^2_{\Omega,\Gamma}\cap{\rm D}(\rm T_r)\right)
\] 
are sufficiently smooth quasi-strong solutions which are bounded in the norm of $\widehat{\mathcal{H}}^{0,1}_{\Omega,\Gamma}$ and precompact in $\mathcal{H}^{-1,0}_{\Omega,\Gamma}$ (recall (\ref{spaces})).
Recently, works such as \cite{CGGM10,Gal07,Gal&Grasselli08,Gal-Shomberg15,GrMS10,Miranville&Zelik05,ShombergXX} are able to establish the existence of an exponential attractor for wave equations with dynamic boundary conditions with the use of suitable $H^2$-elliptic regularity estimates. 
Without bona-fide strong solutions to Problem {\bf P}, similar estimates are not available here.

\begin{theorem}  \label{t:wk-exp-attr}
The semigroup of solution operators $\mathcal{S}=(\mathcal{S}(t))_{t\ge0}$ generated by the quasi-strong solutions of Problem \textbf{P} admits a weak exponential attractor $\mathcal{E}^{-1}$ that satisfies:
\begin{enumerate}
\item $\mathcal{E}^{-1}$ is bounded in $\widehat{\mathcal{H}}^{0,1}_{\Omega,\Gamma}$ and compact in $\mathcal{H}^{-1,0}_{\Omega,\Gamma}$,
\item $\mathcal{E}^{-1}$ is positively invariant; i.e., for all $t\ge0,$ $S(t)\mathcal{E}^{-1}\subseteq\mathcal{E}^{-1},$
\item $\mathcal{E}^{-1}$ attracts bounded subsets of $\widehat{\mathcal{H}}^{0,1}_{\Omega,\Gamma}$ exponentially with the metric of $\mathcal{H}^{-1,0}_{\Omega,\Gamma}$; i.e., there exists $\rho>0$ and $Q$ such that, for every bounded subset $B\subset\widehat{\mathcal{H}}^{0,1}_{\Omega,\Gamma}$ and for all $t\ge0,$
\[
{\rm dist}_{\mathcal{H}^{-1,0}_{\Omega,\Gamma}}(\mathcal{S}(t)B,\mathcal{E}^{-1})\le Q(\|B\|_{\widehat{\mathcal{H}}^{0,1}_{\Omega,\Gamma}})e^{-\rho t}.
\]
\item $\mathcal{E}^{-1}$ possesses finite fractal dimension in $\mathcal{H}^{-1,0}_{\Omega,\Gamma}$; i.e.,
\begin{equation*}  \label{fdimension}
{\rm dim_F}(\mathcal{E}^{-1},\mathcal{H}^{-1,0}_{\Omega,\Gamma}):=\limsup_{r\rightarrow 0}\frac{\ln \mu_{\mathcal{H}^{-1,0}_{\Omega,\Gamma}}(\mathcal{E}^{-1},r)}{-\ln r}<\infty,
\end{equation*}
where $\mu _{\mathcal{H}^{-1,0}_{\Omega,\Gamma}}(\mathcal{E}^{-1},r)$ denotes the minimum number of balls of radius $r$ from $\mathcal{H}^{-1,0}_{\Omega,\Gamma}$ required to cover $\mathcal{E}^{-1}$.
\end{enumerate}
\end{theorem}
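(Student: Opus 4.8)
The plan is to apply an abstract discrete exponential attractor theorem of the smoothing/decomposition type (originating in \cite{EFNT95} and adapted to the weak-topology setting in \cite[\S4]{Pata&Zelik06-2}) to the time-$t^*$ map $\mathbb{S}:=\mathcal{S}(t^*)$, for a conveniently fixed $t^*>0$, and then to lift the resulting discrete attractor to the continuous semigroup via H\"older-in-time continuity. The abstract framework lives in the pair $\widehat{\mathcal{H}}^{0,1}_{\Omega,\Gamma}\hookrightarrow\mathcal{H}^{-1,0}_{\Omega,\Gamma}$ with compact embedding, so the three ingredients I must produce are: (a) a compact, positively invariant, absorbing set; (b) Lipschitz continuity of $\mathbb{S}$ in $\mathcal{H}^{-1,0}_{\Omega,\Gamma}$; and (c) a smoothing estimate controlling the fractal dimension.

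First I would construct the absorbing set. Working with the quasi-strong solutions of Theorem \ref{t:quasi-strong}, I would use the uniform-in-time bound $\|U(t)\|_{\mathbb{V}^1}\le K$ available in that regularity class: feeding it into Lemma \ref{what-1} controls $\|{\rm T_r}\Phi^t\|_{\mathcal{M}^1_{\Omega,\Gamma}}=\|\partial_s\Phi^t\|_{\mathcal{M}^1_{\Omega,\Gamma}}$, while Lemma \ref{what-2} controls the tail term $\sup_{\tau\ge1}\tau\mathbb{T}(\tau;\Phi^t)$. Since the $\mathcal{K}^1_{\Omega,\Gamma}$-norm is precisely the sum of $\|\cdot\|^2_{\mathcal{M}^1_{\Omega,\Gamma}}$, $\|\partial_s\cdot\|^2_{\mathcal{M}^1_{\Omega,\Gamma}}$ and the tail supremum, these three bounds yield a set $\mathcal{B}^1$ bounded in $\widehat{\mathcal{H}}^{0,1}_{\Omega,\Gamma}$ and absorbing (the $\mathbb{X}^2$-component being absorbed by Lemma \ref{t:weak-ball}). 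Because $\widehat{\mathcal{H}}^{0,1}_{\Omega,\Gamma}\hookrightarrow\mathcal{H}^{-1,0}_{\Omega,\Gamma}$ is compact, $\mathcal{B}^1$ is precompact in $\mathcal{H}^{-1,0}_{\Omega,\Gamma}$; taking the $\mathcal{H}^{-1,0}_{\Omega,\Gamma}$-closure of $\bigcup_{t\ge t_0}\mathcal{S}(t)\mathcal{B}^1$ then gives a compact, positively invariant, absorbing set $\mathcal{B}$ that is still bounded in $\widehat{\mathcal{H}}^{0,1}_{\Omega,\Gamma}$.

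For the dynamics on $\mathcal{B}$, ingredient (b) is exactly Theorem \ref{t:weak-cont}, and the H\"older continuity of $t\mapsto\mathcal{S}(t)\Upsilon$ follows from the solution regularity together with the same estimate. The decisive step is (c): for $b_1,b_2\in\mathcal{B}$ I would split the difference $\widetilde\Upsilon(t)=\mathcal{S}(t)b_1-\mathcal{S}(t)b_2$ into a part that decays exponentially in $\mathcal{H}^{-1,0}_{\Omega,\Gamma}$ (the homogeneous linear flow, whose decay is furnished by the dissipation constant $c_0$ of Lemma \ref{t:weak-ball} together with inequality (\ref{operator-T-1})) and a part, forced by the nonlinearity and the coupling, that I want to be bounded in $\widehat{\mathcal{H}}^{0,1}_{\Omega,\Gamma}$ and Lipschitz in $\|b_1-b_2\|_{\mathcal{H}^{-1,0}_{\Omega,\Gamma}}$. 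For the $U$-component this exploits the local Lipschitz property of $F$ on $\mathcal{B}^1$ and the genuinely parabolic regularization of the $-\omega\Delta$ term (recall $\omega>0$); for the memory component I would invoke the representation formula of Corollary \ref{t:memory-regularity-1} to write $\widetilde\Phi^{t^*}$ through the history of $\widetilde U$, reducing the $\mathcal{K}^1_{\Omega,\Gamma}$-bound on $\widetilde\Phi^{t^*}$ to spatial-regularity control of $\widetilde U$ over $[0,t^*]$ extracted from the difference energy identity of Theorem \ref{t:weak-cont} in the antiderivative variables $\widetilde\chi,\widetilde\Psi$, with the tail piece handled as in Lemmas \ref{what-1}--\ref{what-2}.

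Granting (a)--(c), the abstract theorem produces a discrete exponential attractor $\mathcal{E}_d\subset\mathcal{B}$ of finite fractal dimension in $\mathcal{H}^{-1,0}_{\Omega,\Gamma}$; setting $\mathcal{E}^{-1}:=\bigcup_{t\in[0,t^*]}\mathcal{S}(t)\mathcal{E}_d$ and using the H\"older continuity in $(t,\Upsilon)$ upgrades it to a continuous exponential attractor enjoying properties (1), (2) and (4), while property (3) follows since $\mathcal{B}$ absorbs bounded subsets of $\widehat{\mathcal{H}}^{0,1}_{\Omega,\Gamma}$ and exponential attraction is transitive. I expect the main obstacle to be precisely the memory half of the smoothing estimate: because $\mathcal{M}^1_{\Omega,\Gamma}\hookrightarrow\mathcal{M}^0_{\Omega,\Gamma}$ is \emph{not} compact (the Pata--Zucchi counterexample \cite{Pata-Zucchi-2001}), the needed regularity of $\widetilde\Phi^{t^*}$ in $\mathcal{K}^1_{\Omega,\Gamma}$ cannot come from any elliptic smoothing and must instead be wrung out of the transport structure of ${\rm T_r}$, the representation formula, and the tail-function bounds, entirely without the $H^2$-elliptic regularity that is unavailable for quasi-strong solutions.
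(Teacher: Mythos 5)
Your proposal follows essentially the same route as the paper: the abstract smoothing/decomposition theorem (Proposition \ref{abstract1}) applied in the pair $\widehat{\mathcal{H}}^{0,1}_{\Omega,\Gamma}\hookrightarrow\mathcal{H}^{-1,0}_{\Omega,\Gamma}$, with the compact absorbing set obtained exactly as you describe from the quasi-strong $\mathbb{V}^1$-bound fed into Lemmata \ref{what-1}--\ref{what-2} (Lemma \ref{t:quasi-ball}), the Lipschitz/H\"older continuity from Theorem \ref{t:weak-cont} (Lemma \ref{t:to-H3}), the linear-decaying-plus-nonlinear-smoothing splitting for (H2) (Lemma \ref{t:to-H2}), and the basin extended to all of $\widehat{\mathcal{H}}^{0,1}_{\Omega,\Gamma}$ by transitivity of exponential attraction (Proposition \ref{t:exp-attr}). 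Your correctly anticipated concern about the non-compactness of $\mathcal{M}^1_{\Omega,\Gamma}\hookrightarrow\mathcal{M}^0_{\Omega,\Gamma}$ is resolved in the paper just as you suggest, by wringing the $\mathcal{K}^1_{\Omega,\Gamma}$-control of the memory component out of the transport structure and the tail-function lemmas rather than any elliptic regularity.
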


The proof of Theorem \ref{t:wk-exp-attr} follows from the application of an abstract result modified only to suit our needs here (for further reference, see for example, \cite{EFNT95,EMZ00,GGMP05}).

\begin{proposition}  \label{abstract1}
Let $H_0$ and $H_{-1}$ be Hilbert spaces such that the embedding $H_0\hookrightarrow H_{-1}$ is compact. 
Let $S=(S(t))_{t\ge0} $ be a semigroup of operators on $H_0$. 
Assume the following hypotheses hold:

\begin{enumerate}
\item[(H1)] There exists a bounded absorbing set $B_0\subset H_0$ which is positively invariant for $S(t).$ 
More precisely, there exists a time $t_0>0$ (possibly depending on the radius of $B_0$) such that, for all $t\ge t_0,$
\begin{equation*}
S(t)B_0\subset B_0.
\end{equation*}

\item[(H2)] There is $t^*\ge t_0$ such that the map $S(t^*)$ admits the decomposition, for all $\zeta_{01},\zeta _{02}\in B_0,$ 
\begin{equation*}
S(t^*)\zeta_{01} - S(t^*)\zeta_{02} = L(\zeta_{01},\zeta_{02}) + K(\zeta_{01},\zeta_{02}),
\end{equation*}
where, for some constants $\kappa=\kappa(t^*) \in (0,\frac{1}{2})$ and $\Lambda=\Lambda(t^*)\ge 0$, the following hold:
\begin{equation}  \label{H2-L}
\|L(\zeta_{01},\zeta_{02})\|_{H_{-1}} \le \kappa \|\zeta_{01}-\zeta_{02} \|_{H_{-1}}
\end{equation}
and
\begin{equation}  \label{H2-K}
\|K(\zeta_{01},\zeta_{02})\|_{H_0} \le \Lambda \|\zeta_{01}-\zeta_{02} \|_{H_{-1}}.
\end{equation}

\item[(H3)] The map
\begin{equation*}  \label{to-H3}
(t,\zeta_0)\mapsto S(t)\zeta:[t^*,2t^*]\times B_0\rightarrow B_0
\end{equation*}
is Lipschitz continuous on $B_0$ in the topology of $H_{-1}$.
\end{enumerate}

Then the semigroup $S$ admits an exponential attractor $E_{-1}$ in $B_0.$
\end{proposition}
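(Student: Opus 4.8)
The plan is to reduce the continuous problem to a discrete one and then reassemble. First I would pass to the discrete dynamical system generated by the single map $\mathbb{S}:=S(t^*)$ acting on the positively invariant absorbing set $B_0$, which is legitimate by (H1). The goal at this stage is to construct a \emph{discrete} exponential attractor $E_d\subset B_0$ for $\mathbb{S}$: a positively invariant, finite-dimensional (in $H_{-1}$) compact set attracting $B_0$ at a uniform geometric rate under iteration of $\mathbb{S}$. The full attractor is then obtained by lifting $E_d$ back to continuous time.

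The engine of the discrete construction is an iterated covering argument built on the decomposition (H2). Fix the $H_{-1}$-radius $R_0$ of $B_0$. The key covering lemma I would prove is the following: whenever a subset of $B_0$ lies in an $H_{-1}$-ball of radius $r$ about a point $c$, its image under $\mathbb{S}$ is contained in a union of $N_*$ $H_{-1}$-balls of radius $2\kappa r$, where $N_*$ is independent of $r$. Indeed, for $\zeta$ in such a ball the contraction estimate \eqref{H2-L} places the $L$-component of $\mathbb{S}\zeta-\mathbb{S}c$ within $\kappa r$ in $H_{-1}$, while \eqref{H2-K} confines the $K$-component to an $H_0$-ball of radius $\Lambda r$; by compactness of the embedding $H_0\hookrightarrow H_{-1}$ this $H_0$-ball is covered by a number $N_*$ of $H_{-1}$-balls of radius $\kappa r$, and after rescaling $N_*$ depends only on the ratio $\kappa/\Lambda$, hence not on $r$. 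Combining the two contributions, the image lies in $N_*$ $H_{-1}$-balls of radius $2\kappa r$. Here the hypothesis $\kappa\in(0,\tfrac12)$ is used decisively: it guarantees $2\kappa<1$, so the radius strictly contracts. Iterating from the trivial one-ball cover of $B_0$ yields, at generation $k$, a cover of $\mathbb{S}^k B_0$ by $N_*^{\,k}$ balls of radius $(2\kappa)^k R_0$. From these coverings the discrete attractor $E_d$ is assembled by the standard procedure of \cite{EFNT95,EMZ00,GGMP05}: the geometric decay of the radii gives exponential attraction ${\rm dist}_{H_{-1}}(\mathbb{S}^k B_0,E_d)\le C(2\kappa)^k$, the bound $N_*^{\,k}$ on the number of balls yields ${\rm dim_F}(E_d,H_{-1})\le \ln N_*/\ln(1/(2\kappa))<\infty$, and positive invariance $\mathbb{S}E_d\subseteq E_d$ is built into the construction.

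Finally I would lift the discrete attractor to continuous time by setting $E_{-1}:=\overline{\bigcup_{t\in[0,t^*]}S(t)E_d}$, the closure taken in $H_{-1}$. Hypothesis (H3), the $H_{-1}$-Lipschitz continuity of $(t,\zeta)\mapsto S(t)\zeta$ on $[t^*,2t^*]\times B_0$, does the remaining work: it exhibits $E_{-1}$ as the Lipschitz image of the product $[0,t^*]\times E_d$, whence ${\rm dim_F}(E_{-1},H_{-1})\le 1+{\rm dim_F}(E_d,H_{-1})<\infty$, and it transfers exponential attraction from the discrete times $kt^*$ to all $t\ge0$ by writing $t=kt^*+\tau$ with $\tau\in[0,t^*)$ and controlling $S(\tau)$ uniformly. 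Positive invariance of $E_{-1}$ for the full semigroup then follows from $\mathbb{S}E_d\subseteq E_d$ together with the semigroup property.

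The step I expect to be the main obstacle is the covering lemma, and specifically the extraction of a single covering number $N_*$ that is uniform in the radius $r$. This uniformity is not automatic; it is precisely where the compactness of $H_0\hookrightarrow H_{-1}$ is indispensable, via a scaling argument that reduces the count to the number of $H_{-1}$-balls of radius $\kappa/\Lambda$ needed to cover the unit ball of $H_0$. Everything downstream — geometric decay of radii, the dimension count, and the continuous-time extension — is then routine bookkeeping once (H3) is invoked.
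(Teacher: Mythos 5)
Your proposal reconstructs, essentially verbatim, the standard smoothing-plus-squeezing construction of exponential attractors; the paper itself gives no proof of Proposition \ref{abstract1}, deferring to \cite{EFNT95,EMZ00,GGMP05}, and your argument is the one found in those references, so in that sense you and the paper take the same route. Two pieces of bookkeeping in your covering lemma need repair, though neither threatens the approach. First, to iterate the lemma the centers of the generation-$(k+1)$ balls must again lie in $B_0$ (otherwise the decomposition in (H2) cannot be applied to them at the next step); the centers you actually produce are of the form $\mathbb{S}c+y_i$ with $y_i$ drawn from the cover of an $H_0$-ball, and re-centering each such ball at a point of $\mathbb{S}^{k+1}B_0$ that it meets doubles the radius. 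With your mesh $\kappa/\Lambda$ this yields a contraction factor $4\kappa$, which fails to be less than $1$ for $\kappa\in[1/4,1/2)$; the fix is to cover the unit ball of $H_0$ by $H_{-1}$-balls of radius $\epsilon<(\tfrac12-\kappa)/\Lambda$, so that the re-centered radius $2(\kappa+\epsilon\Lambda)r$ still contracts --- this is exactly where $\kappa<\tfrac12$ (rather than merely $\kappa<1$) is spent, and $N_*$ then depends on $\epsilon$ rather than on the ratio $\kappa/\Lambda$ alone. Second, your lift $E_{-1}=\overline{\bigcup_{t\in[0,t^*]}S(t)E_d}$ invokes Lipschitz continuity of the flow on $[0,t^*]$, whereas (H3) provides it only on $[t^*,2t^*]$; the standard remedy is to set $E_{-1}=\bigcup_{t\in[t^*,2t^*]}S(t)E_d$, which by positive invariance of $E_d$ under $\mathbb{S}$ still contains $\bigcup_{t\ge t^*}S(t)E_d$, is a Lipschitz image of $[t^*,2t^*]\times E_d$, and gives the same dimension bound and attraction rate after adjusting constants. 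With these two adjustments your outline is a complete and correct proof of the proposition.
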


To begin, note that the embedding $\widehat{\mathcal{H}}^{0,1}_{\Omega,\Gamma} \hookrightarrow \mathcal{H}^{-1,0}_{\Omega,\Gamma}$ is compact (see (\ref{spaces}) above) due to the fact that the embedding $\mathbb{V}^s\hookrightarrow\mathbb{X}^2$ is compact, for any $s>0$, and by the introduction of the ``tail spaces'' above, $\mathcal{K}^{i}_{\Omega,\Gamma}$, $i\in\{0,1\}$, where the embedding $\mathcal{K}^1_{\Omega,\Gamma} \hookrightarrow \mathcal{M}^{0}_{\Omega,\Gamma}$ is compact. 
As already mentioned above, we will require Lemmata \ref{what-1} and \ref{what-2} in order to provide the precompactness of the trajectories of Problem {\bf P}.
This in turn means we need to satisfy the hypothesis that $\|U(t)\|_{\mathbb{V}^1}\le K$, for some constant $K>0$ for all $t\ge0.$
Hence, the exponential attractors we seek are for the quasi-strong solutions.

Next we show (H1) of Proposition \ref{abstract1} holds.

\begin{remark}
We claim the space $\mathbb{V}^2\times (\mathcal{M}^2_{\Omega,\Gamma} \cap {\rm D(T_r)})$ is {\em dense} in the space $\mathcal{H}^{0,1}_{\Omega,\Gamma}=\mathbb{X}^2\times\mathcal{M}^1_{\Omega,\Gamma}$; i.e., where the bounded absorbing set $\mathcal{B}^0$ resides.
This is evident from the fact that $\mathbb{V}^2$ is the domain of the (densely defined) operator ${\rm A_W^{\alpha,\beta,\nu,\omega}}$ and also from the fact that the space also contains the domain of a densely defined operator, ${\rm T_r}$.
This means every element in $\mathcal{B}^0\subset\mathcal{H}^{0,1}_{\Omega,\Gamma}$ is the limit a sequence $\{\Upsilon_0^{(n)}\}_{n=1}^\infty \subset \mathbb{V}^2\times (\mathcal{M}^2_{\Omega,\Gamma} \cap {\rm D(T_r)}).$
So in the sequel, when we establish conditions (H1) and (H2) in Lemma \ref{t:quasi-ball} and Lemma \ref{t:to-H2} below, it suffices to take initial data from $\mathbb{V}^2\times (\mathcal{M}^2_{\Omega,\Gamma} \cap {\rm D(T_r)})$ and work with the corresponding quasi-strong solutions.
\end{remark}

\begin{lemma}  \label{t:quasi-ball}
Suppose the assumptions of Theorem \ref{t:quasi-strong} and Lemma \ref{t:weak-ball} hold. 
For all $R>0$ and $\Upsilon_0=(U_0,\Phi_0)\in\mathbb{H}^2=\mathbb{V}^2\times (\mathcal{M}^2_{\Omega,\Gamma} \cap {\rm D(T_r)})$ with $\|\Upsilon_0\|_{\widehat{\mathcal{H}}^{0,1}_{\Omega,\Gamma}}\le R$, there exist positive constants $\hat c_0=\min\{c_0,\delta_\Omega,\delta_\Gamma\}$ and $P_0$, and a positive monotonically increasing function $Q(\cdot)$, such that, for all $t\ge0$,
\begin{align}
    \left\| \left( U(t),\Phi^t \right) \right\|^2_{\widehat{\mathcal{H}}^{0,1}_{\Omega,\Gamma}} \le Q(R)e^{-\hat c_0 t}(t+1)+P_0  \label{strong-decay} 
\end{align}
(where $c_0$ and $P_0$ are due to Lemma \ref{t:weak-ball}).
In addition, the bounded absorbing set $\mathcal{B}^0$ in $\mathcal{H}^{0,1}_{\Omega,\Gamma}$ (given in Theorem \ref{t:weak-ball}) is bounded and absorbing in $\widehat{\mathcal{H}}^{0,1}_{\Omega,\Gamma}=\mathbb{X}^2\times\mathcal{K}^1_{\Omega,\Gamma}$.
Moreover, (H1) holds.
\end{lemma}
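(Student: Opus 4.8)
The plan is to estimate separately the three blocks of the $\widehat{\mathcal{H}}^{0,1}_{\Omega,\Gamma}$-norm. Since ${\rm T_r}\Phi^t=-\partial_s\Phi^t$, the definition of the $\mathcal{K}^1_{\Omega,\Gamma}$-norm gives
\begin{equation*}
\|(U(t),\Phi^t)\|^2_{\widehat{\mathcal{H}}^{0,1}_{\Omega,\Gamma}} = \|U(t)\|^2_{\mathbb{X}^2}+\|\Phi^t\|^2_{\mathcal{M}^1_{\Omega,\Gamma}} + \|{\rm T_r}\Phi^t\|^2_{\mathcal{M}^1_{\Omega,\Gamma}} + \sup_{\tau\ge1}\tau\mathbb{T}(\tau;\Phi^t).
\end{equation*}
The first two summands constitute $\|(U(t),\Phi^t)\|^2_{\mathcal{H}^{0,1}_{\Omega,\Gamma}}$, already bounded by $Q(R)e^{-c_0t}+P_0$ through Lemma \ref{t:weak-ball} (note $\|\Upsilon_0\|_{\mathcal{H}^{0,1}_{\Omega,\Gamma}}\le\|\Upsilon_0\|_{\widehat{\mathcal{H}}^{0,1}_{\Omega,\Gamma}}\le R$); the third summand is controlled by Lemma \ref{what-1} and the fourth by Lemma \ref{what-2}. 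Adding the three bounds, estimating the data factors $\|{\rm T_r}\Phi_0\|^2_{\mathcal{M}^1_{\Omega,\Gamma}}$ and $\sup_{\tau\ge1}\tau\mathbb{T}(\tau;\Phi_0)$ by $\|\Phi_0\|^2_{\mathcal{K}^1_{\Omega,\Gamma}}\le R^2$, and absorbing the slowest factor $(t+2)e^{-\min\{\delta_\Omega,\delta_\Gamma\}t}$ into $Q(R)(t+1)e^{-\hat c_0t}$ with $\hat c_0=\min\{c_0,\delta_\Omega,\delta_\Gamma\}$, yields (\ref{strong-decay}) once all non-decaying contributions are collected into $P_0$.

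The decisive point---and the reason quasi-strong solutions are indispensable---is that both Lemma \ref{what-1} and Lemma \ref{what-2} presuppose a uniform-in-time bound $\|U(t)\|_{\mathbb{V}^1}\le K$, whereas the hypothesis $\|\Upsilon_0\|_{\widehat{\mathcal{H}}^{0,1}_{\Omega,\Gamma}}\le R$ controls $U_0$ only in $\mathbb{X}^2$. To produce this bound I would run a higher-order energy estimate at the $\mathbb{V}^1$-level for $U$: testing the interior and boundary equations against $\partial_tU(t)$ (admissible since the quasi-strong solution satisfies $U\in W^{1,2}(0,T;\mathbb{V}^1)$ and $\partial_tU\in L^\infty(0,T;\mathbb{X}^2)$), pairing the memory identities (\ref{weak-solution-2})-(\ref{weak-solution-2.5}) against a suitable multiplier, and invoking the one-sided bounds (\ref{quasi-assm-3})-(\ref{quasi-assm-4}) and the coercivity estimates (\ref{quasi-assm-7})-(\ref{quasi-assm-8}) on $f$ and $g$. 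After using the memory dissipation (\ref{operator-T-1}) afforded by (\ref{mu-4}) to absorb the history contributions, the outcome is a differential inequality $\frac{{\rm d}}{{\rm d}t}y(t)\le g(t)y(t)+h$ for $y(t)=\|U(t)\|^2_{\mathbb{V}^1}$ (modulo lower-order terms), in which $h$ is a structural constant and $g$ is integrable over unit-length time intervals by the bounds of Lemma \ref{t:weak-ball}.

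The main obstacle is to convert this into a pointwise bound whose constant does not inherit the uncontrolled $\mathbb{V}^1$- (indeed $\mathbb{V}^2$-) norm of $U_0$; otherwise the constant $P_0$ in (\ref{strong-decay}) would depend on the datum and the absorbing property would collapse. I would resolve this with the uniform Gr\"{o}nwall lemma (cf. \cite[Lemma III.1.1]{Temam88}), fed by the integral estimate (\ref{fus-11.5}), which supplies $\int_t^{t+1}\|U(\tau)\|^2_{\mathbb{V}^1}{\rm d}\tau\le Q(R)$ uniformly in $t$. This trades dependence on the initial higher-order norm for dependence on the available time-integrated norm and returns $\|U(t)\|_{\mathbb{V}^1}\le K$ for $t\ge1$, the residual interval $[0,1]$ being covered by the quasi-strong regularity $U\in L^\infty(0,T;\mathbb{V}^1)$. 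When the datum already lies in $\mathcal{B}^0$ the factor $Q(R)$ reduces to the universal $Q(\sqrt{P_0+1})$, so $K$---and with it the non-decaying part of (\ref{strong-decay})---is independent of $R$. Reconciling this uniform $\mathbb{V}^1$ bound with the absence of a bona-fide $H^2$-strong solution is where the genuine difficulty resides.

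Granted (\ref{strong-decay}), the absorbing and invariance statements are routine. Any bounded $B\subset\widehat{\mathcal{H}}^{0,1}_{\Omega,\Gamma}$ is a fortiori bounded in $\mathcal{H}^{0,1}_{\Omega,\Gamma}$ (via $\mathcal{K}^1_{\Omega,\Gamma}\hookrightarrow\mathcal{M}^1_{\Omega,\Gamma}$), and since $Q(R)e^{-\hat c_0t}(t+1)\to0$ as $t\to\infty$, there is a time $t_0$ depending only on $\|B\|_{\widehat{\mathcal{H}}^{0,1}_{\Omega,\Gamma}}$ past which $\|\mathcal{S}(t)\Upsilon_0\|^2_{\widehat{\mathcal{H}}^{0,1}_{\Omega,\Gamma}}\le P_0+1$ for every $\Upsilon_0\in B$. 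Thus the ball of radius $\sqrt{P_0+1}$ in $\widehat{\mathcal{H}}^{0,1}_{\Omega,\Gamma}$, identified with $\mathcal{B}^0$, is bounded and absorbing there; positive invariance follows exactly as in Lemma \ref{t:weak-ball}, enlarging the radius if necessary. This verifies hypothesis (H1).
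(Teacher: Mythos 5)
Your overall architecture is the paper's: split the $\mathcal{K}^1_{\Omega,\Gamma}$-norm into its three pieces, handle the $\mathcal{H}^{0,1}_{\Omega,\Gamma}$-part with Lemma \ref{t:weak-ball}, and handle $\|{\rm T_r}\Phi^t\|^2_{\mathcal{M}^1_{\Omega,\Gamma}}$ and $\sup_{\tau\ge1}\tau\mathbb{T}(\tau;\Phi^t)$ with Lemmata \ref{what-1} and \ref{what-2}, which reduces everything to the uniform bound $\|U(t)\|_{\mathbb{V}^1}\le K$. You correctly identify that bound as the crux. The gap is in how you propose to obtain it. Testing the \emph{original} equations against $\partial_tU$ produces the memory term $\int_0^\infty\mu_\Omega(s)\langle\nabla\eta^t(s),\nabla\partial_tu(t)\rangle_{L^2(\Omega)}\,{\rm d}s$ (and its boundary analogue), which involves $\nabla\partial_tu$ and cannot be absorbed by $\|\partial_tU\|^2_{\mathbb{X}^2}$ or by the dissipation (\ref{operator-T-1}); your phrase ``pairing the memory identities against a suitable multiplier'' does not say how this term is closed, and it is precisely the obstruction. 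The paper's route is different in an essential way: it first differentiates the whole system in time and tests with $(\partial_tU,\partial_t\Phi^t)$, so that the memory contribution appears as $\langle\partial_t\Phi^t,\partial_tU\rangle_{\mathcal{M}^1_{\Omega,\Gamma}}$ and cancels against the corresponding term coming from the differentiated history equation; together with (\ref{quasi-assm-3})--(\ref{quasi-assm-4}) this yields the uniform bound (\ref{qest17}) on $\|\partial_tU\|^2_{\mathbb{X}^2}+\|\partial_t\Phi^t\|^2_{\mathcal{M}^1_{\Omega,\Gamma}}$ \emph{plus} an $L^1(0,\infty)$ control of the associated dissipation. Only then does it test the time-differentiated equation with $U$, and the resulting forcing $2\|\partial_tU\|^2_{\mathbb{X}^2}-2\langle\partial_t\Phi^t,U\rangle_{\mathcal{M}^1_{\Omega,\Gamma}}$ is integrable in time thanks to that first step, which is what closes the Gr\"{o}nwall argument for $\|U\|^2_{\mathbb{V}^1}$. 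Without the preliminary time-differentiated estimate, the differential inequality $\dot y\le gy+h$ you want to feed into the uniform Gr\"{o}nwall lemma is never actually derived.

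A secondary remark: your concern that the constants might inherit the $\mathbb{V}^2\times\mathcal{M}^2$-norm of the datum is legitimate, and your idea of trading it for the time-integrated bound (\ref{fus-11.5}) via uniform Gr\"{o}nwall is attractive. But note that the paper does not do this --- its estimate (\ref{qest17}) is explicitly stated for $\|(U_0,\Phi_0)\|_{\mathcal{H}^{2,2}_{\Omega,\Gamma}}\le R$, because $\partial_tU(0)=\widehat{U}_0$ is built from ${\rm A_W^{0,\beta,\nu,\omega}}U_0$ and the second-order memory data --- and your fix would not escape this either, since the inhomogeneous term $h$ in your inequality would still carry $\|\partial_tU\|^2_{\mathbb{X}^2}$, whose integrability constant comes from that same higher-order datum. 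So this part of your proposal promises more than either your argument or the paper's delivers, and in any case it cannot be carried out until the differential inequality itself is established by the two-step (differentiate-then-test) procedure above.
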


\begin{proof}
Let $R>0$ and $\Upsilon_0=(U_0,\Phi_0)\in\mathbb{H}^2=\mathbb{V}^2\times (\mathcal{M}^2_{\Omega,\Gamma} \cap {\rm D(T_r)})$ be such that $\|\Upsilon_0\|_{\widehat{\mathcal{H}}^{0,1}_{\Omega,\Gamma}}\le R.$
Recall the definition of the norm
\begin{equation} \label{fus-12}
\|\Phi\|^2_{\mathcal{K}^1_{\Omega,\Gamma}} = \|\Phi\|^2_{\mathcal{M}^1_{\Omega,\Gamma}} + \|\partial_s\Phi\|^2_{\mathcal{M}^1_{\Omega,\Gamma}} + \sup_{\tau\ge1} \tau\mathbb{T}(\tau;\Phi). 
\end{equation}
Hence, in light of Lemmata \ref{what-1} and \ref{what-2}, it suffices to show there exists a constant $K>0$ such that, for all $t\ge0,$ 
\begin{equation} \label{desire}
\|U(t)\|_{\mathbb{V}^1}\le K.
\end{equation}
To do this we will report from the proof of \cite[Theorem 3.13]{Gal-Shomberg15-2}.

We consider the time-differentiated Problem {\textbf P} where we now seek a function $(U,\Phi)$ satisfying 
\begin{align}
& \left\langle \partial_{tt}U,\Xi \right\rangle _{\mathbb{X}^2} + \left\langle \mathrm{A_W^{0,\beta ,\nu,\omega}}\partial_tU,\Xi \right\rangle_{\mathbb{X}^2} + \left\langle \partial_t\Phi^t,\Xi \right\rangle_{\mathcal{M}^1_{\Omega,\Gamma}}  \label{ap1} \\
& = - \left\langle f'(u) \partial_tu,\varsigma \right\rangle_{L^2(\Omega)} - \left\langle \widetilde{g}'(u) \partial_tu,\varsigma_{\mid\Gamma} \right\rangle_{L^2(\Gamma)}  \notag
\end{align}
and
\begin{equation}
\left\langle \partial_{tt}\Phi^t,\Pi\right\rangle_{\mathcal{M}^1_{\Omega,\Gamma}}=\left\langle \mathrm{T_r}\partial_t\Phi^t,\Pi\right\rangle_{\mathcal{M}^1_{\Omega,\Gamma}} +\left\langle \partial_tU,\Pi\right\rangle_{\mathcal{M}^1_{\Omega,\Gamma}}  \label{ap2}
\end{equation}
hold for almost all $t\in (0,T)$, for any $T>0$, and for all $\Xi=(\varsigma,\varsigma_{\mid\Gamma})^{\mathrm{tr}}\in\mathbb{V}^1\cap\left( L^2(\Omega)\times L^r(\Gamma) \right)$ and $\Pi=(\rho,\rho_{\mid\Gamma})^{\mathrm{tr}}\in \mathcal{M}^1_{\Omega,\Gamma}$.
Moreover, the function $(U,\Phi) $ fulfills the conditions $U(0)=U_0,$ $\Phi^0=\Phi_0$ and 
\begin{equation}
\partial_tU(0)=\widehat{U}_0, \quad\quad \partial_t\Phi^0=\widehat{\Phi}^0,  \label{ap3}
\end{equation}
where we have set
\begin{align*}
\widehat{U}_0 & := -\mathrm{A_W^{0,\beta,\nu,\omega}}U_0 - \int_0^\infty\mu_\Omega(s)\mathrm{A_W^{\alpha,0,0,\omega}}\Phi_0(s){\rm d}s - \nu \int_0^\infty\mu_\Gamma(s)\binom{0}{\mathrm{B}\xi_0(s)}{\rm d}s - F(U_0),   \\
\widehat{\Phi}^0 & :=\mathrm{T_r}\Phi_0(s) + U_0.
\end{align*}
Note that since $U_0\in \mathbb{V}^2$ and $\Phi^0\in \mathcal{M}^2_{\Omega,\Gamma} \cap \mathrm{D(T_r)}$, then $(\widehat{U}_0,\widehat{\Phi}^0)\in \mathbb{X}^2\times \mathcal{M}^1_{\Omega,\Gamma}=\mathcal{H}^{0,1}_{\Omega,\Gamma}$, owing to the continuous embeddings $H^2(\Omega) \hookrightarrow L^\infty(\Omega)$ and $H^2(\Gamma) \hookrightarrow L^\infty(\Gamma)$. 
According to assumptions (\ref{mu-1})-(\ref{mu-3}), we can infer that
\begin{equation*} \label{qest15}
0\le \int_0^\infty\mu_S(s)ds=:m_S<\infty, \quad \text{for each}\ S\in \left\{ \Omega,\Gamma \right\}, 
\end{equation*}
such that repeated application of Jensen's inequality yields
\begin{align*}
\left\| \int_0^\infty\mu_\Omega(s)\mathrm{A_W^{\alpha,0,0,\omega}}\Phi_0(s)ds\right\|_{\mathbb{X}^2}^2 & \le m_\Omega \int_0^\infty \mu_\Omega(s)\left\| \mathrm{A_W^{\alpha,0,0,\omega}}\Phi_0(s)\right\|_{\mathbb{X}^2}^2{\rm d}s \\
& \le Cm_\Omega \int_0^\infty \mu_\Omega(s)\|\Phi_0(s)\|^2_{H^2(\Omega)}ds
\end{align*}
and
\begin{align*}
\left\| \int_0^\infty\mu_\Gamma(s)\mathrm{B}\xi_0(s){\mathrm{d}}s\right\|^2_{L^2(\Gamma)} & \le m_\Gamma \int_0^\infty\mu_\Gamma(s)\left\| \mathrm{B}\xi_0(s)\right\|^2_{L^2(\Gamma)}{\rm d}s \\
& \le Cm_\Gamma\int_0^\infty\mu_\Gamma(s)\left\| \Phi_0(s)\right\|^2_{H^2(\Gamma)}{\rm d}s.
\end{align*}
We proceed to take $\Xi=\partial_tU(t)$ in (\ref{ap1}) and $\Pi=\partial_t\Phi^t(s) $ in (\ref{ap2}).
By the definition of quasi-strong solution, such a choice of test function is admissible. 
Summing the resulting identities and using (\ref{quasi-assm-3})-(\ref{quasi-assm-4}), we obtain
\begin{align}
\frac{1}{2}\frac{{\rm d}}{{\rm d}t} & \left\{ \|\partial_tU\|_{\mathbb{X}^2}^2 + \|\partial_t\Phi^t\|_{\mathcal{M}^1_{\Omega,\Gamma}}^2 \right\} - \left\langle \mathrm{T_r}\partial_t\Phi^t,\partial_t\Phi^t \right\rangle_{\mathcal{M}^1_{\Omega,\Gamma}}  \notag \\
& + \omega\|\nabla \partial_tu\|^2_{L^2(\Omega)} + \nu\|\nabla_{\Gamma}\partial_tu\|^2_{L^2(\Gamma)} + \beta\|\partial_tu\|^2_{L^2(\Gamma)}  \notag \\
& = -\left\langle f'(u) \partial_tu,\partial_tu\right\rangle_{L^2(\Omega)} - \left\langle \widetilde{g}'(u) \partial_tu,\partial_tu\right\rangle_{L^2(\Gamma)}
\notag \\
& \le \max\{M_f,M_g\} \|\partial_tU\|^2_{\mathbb{X}^2}.  \label{qest16}
\end{align}
Thus, integrating (\ref{qest16}) over $(0,t),$ then by application of Gr\"owall's inequality we arrive at the estimate
\begin{equation} \label{qest17}
\|\partial_tU\|_{\mathbb{X}^2}^2 + \|\partial_t\Phi^t\|_{\mathcal{M}^1_{\Omega,\Gamma}}^2 + \int_0^t \left( 2\|\partial_tU(\tau)\|^2_{\mathbb{V}^1} + \|\partial_t\Phi^\tau\|^2_{L_{k_\Omega\oplus k_\Gamma}(\mathbb{R}_+;\mathbb{V}^1)} \right) {\rm d}\tau \le Q(R),
\end{equation}
for all $t\ge 0$ and all $R>0$ such that $\|(U_0,\Phi_0)\|_{\mathcal{H}^{2,2}_{\Omega,\Gamma}} \le R$.

We now establish a bound for $U$ in $L^\infty(0,\infty;\mathbb{V}^1)$; that is, (\ref{desire}). 
To this end, we proceed to take $\Xi=U(t)$ in (\ref{ap1}) in order to derive
\begin{align}
& \frac{{\rm d}}{{\rm d}t}\left\{ \|U\|^2_{\mathbb{V}^1} + \left\langle \partial_tU,U\right\rangle_{\mathbb{X}^2} + 2\int_\Omega h_f(u) {\rm d}x + 2\int_\Gamma h_g(u) {\rm d}\sigma \right\}  \notag \\
& =2\|\partial_tU\|^2_{\mathbb{X}^2} - 2\left\langle \partial_t\Phi^t,U\right\rangle_{\mathcal{M}^1_{\Omega,\Gamma}}.  \label{qest18}
\end{align}
Moreover, using the Cauchy-Schwarz and Young inequalities, and (\ref{quasi-assm-7})-(\ref{quasi-assm-8}), the following basic inequality holds:
\begin{align}
& C_\ast\|U\|^2_{\mathbb{V}^1}-Q(R)  \notag \\
& \le \|U\|^2_{\mathbb{V}^1}+\left\langle \partial_tU,U\right\rangle_{\mathbb{X}^2}+2\int_\Omega h_f(u) {\rm d}x+2\int_\Gamma h_g(u) {\rm d}\sigma \notag \\
& \le C\|U\|^2_{\mathbb{V}^1}+Q(R),  \label{qest19tris}
\end{align}
for some constants $C_\ast, C>0$ and some function $Q(\cdot)>0,$ all independent of $t$. 
Finally, for any $\iota >0$ we estimate
\begin{align}
& -\left\langle \partial_t\Phi^t,U\right\rangle_{\mathcal{M}^1_{\Omega,\Gamma}} \notag \\
& \le \iota \|U\|^2_{\mathbb{V}^1} + C_\iota\int_0^\infty\mu_\Omega(s)\|\partial_t\eta(s)\|^2_{H^1(\Omega)}{\rm d}s + C_\iota\int_0^\infty\mu_\Gamma(s)\|\partial_t\xi(s)\|^2_{H^1(\Gamma)}{\rm d}s  \notag \\
& \le \iota \|U\|^2_{\mathbb{V}^1} - C_\iota \delta_\Omega^{-1}\int_0^\infty\mu'_\Omega(s)\|\partial_t\eta(s)\|^2_{H^1(\Omega)}{\rm d}s - C_\iota\delta_\Gamma^{-1}\int_0^\infty\mu'_\Gamma(s)\|\partial_t\xi(s)\|^2_{H^1(\Gamma)}{\rm d}s,  \label{qest20}
\end{align}
where in the last line we have employed assumption (\ref{mu-4}). 
Thus, from (\ref{qest18}) we obtain the inequality, for almost all $t\ge0,$ 
\begin{align}
& \frac{{\rm d}}{{\rm d}t}\left\{ \|U\|^2_{\mathbb{V}^1} + \left\langle \partial_tU,U\right\rangle_{\mathbb{X}^2} + 2\int_\Omega h_f(u) {\rm d}x + 2\int_\Gamma h_g(u) {\rm d}\sigma \right\}  \notag \\
& \le C_\iota\|U(t)\|_{\mathbb{V}^1}^2 + 2\|\partial_tU\|^2_{\mathbb{X}^2} - 2\left\langle \partial_t\Phi^t,U\right\rangle_{\mathcal{M}^1_{\Omega,\Gamma}}.  \label{qest21}
\end{align}
We now observe that $2\|\partial_tU\|^2_{\mathbb{X}^2} - 2\langle \partial_t\Phi^t,U\rangle_{\mathcal{M}^1_{\Omega,\Gamma}} \in L^1(0,\infty)$ on account of (\ref{qest17}) and (\ref{qest19tris})-(\ref{qest20}), because $\partial_tU(0) \in\mathbb{X}
^2$ by (\ref{ap3}). 
Thus, observing (\ref{qest19tris}), the application of Gronwall's inequality to (\ref{qest21}) yields the desired uniform bound (\ref{desire}).

To finish the estimate (\ref{weak-decay}) in the space $\widehat{\mathcal{H}}^{0,1}_{\Omega,\Gamma}$, we need to treat the norm (\ref{fus-12}).
Thanks to the bound (\ref{desire}) we now combine the results of Lemmata \ref{what-1} and \ref{what-2} to which yields, for the remaining two terms of (\ref{fus-12}), for all $t\ge0,$
\begin{align}
& \|{\rm T_r}\Phi^t\|^2_{\mathcal{M}^1_{\Omega,\Gamma}} + \sup_{\tau\ge1} \tau\mathbb{T}(\tau;\Phi^t) \notag \\
& \le e^{-\min\{\delta_\Omega,\delta_\Gamma\} t}\|{\rm T_r}\Phi_0\|^2_{\mathcal{M}^1_{\Omega,\Gamma}} + Q(R) \left( \|\mu_\Omega\|_{L^1(\mathbb{R}_+)} + \|\mu_\Gamma\|_{L^1(\mathbb{R}_+)} \right) \notag \\
& + 2 \left( t+2 \right)e^{-\min\{\delta_\Omega,\delta_\Gamma\} t} \sup_{\tau\ge1} \tau\mathbb{T}(\tau;\Phi_0) + Q(R)  \notag \\
& \le e^{-\delta t} \left( \left\|{\rm T_r}\Phi_0\right\|^2_{\mathcal{M}^1_{\Omega,\Gamma}} + 2(t+2)\sup_{\tau\ge1}\tau\mathbb{T}(\tau;\Phi_0)\right) + Q(R)\left( \|\mu_\Omega\|_{L^1(\mathbb{R}_+)}+\|\mu_\Gamma\|_{L^1(\mathbb{R}_+)} + 1\right)  \notag \\
& \le Q(R) \left( e^{-\min\{\delta_\Omega,\delta_\Gamma\} t}(t+1) + 1 \right).  \label{fus-13}
\end{align}
Adding (\ref{fus-13}) into (\ref{fus-11}) produces, for all $t\ge 0,$
\begin{align}
\left\| U(t) \right\|^2_{\mathbb{X}^2} + \left\| \Phi^t \right\|^2_{\mathcal{K}^1_{\Omega,\Gamma}} & \le R^2e^{-c_0 t} + P_0 + Q(R) \left( e^{-\min\{\delta_\Omega,\delta_\Gamma\} t}(t+1) + 1 \right) \notag \\
& \le Q(R)e^{-\min\{c_0,\delta_\Omega,\delta_\Gamma\}t}(t+1)+P_0,
\end{align} 
that is, (\ref{strong-decay}) holds.
\end{proof}

Throughout the remainder of the article, we denote by $\widehat{\mathcal{B}}^0$ the bounded absorbing set in $\widehat{\mathcal{H}}^{0,1}_{\Omega,\Gamma}$.

We now show that the hypotheses (H2) and (H3) hold for the semiflow $\mathcal{S}$ generated by the quasi-strong solutions of Problem \textbf{P} for the space $\widehat{\mathcal{H}}^{0,1}_{\Omega,\Gamma}$. 
Moving forward, we now show (H2) by making the appropriate ``lower-order'' estimates in the norm of $\mathcal{H}^{-1,0}_{\Omega,\Gamma}.$ 

\begin{lemma} \label{t:to-H2} 
Suppose the assumptions of Theorem \ref{t:quasi-strong} and Lemma \ref{t:weak-ball} hold.
In addition, assume there holds
\begin{equation} \label{assk2}
k_\Gamma(0)<\frac{2}{1-\nu}.
\end{equation}
Then condition (H2) holds. 
\end{lemma}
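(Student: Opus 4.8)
The plan is to establish (H2) through the Pata--Zelik decomposition of the difference of two trajectories into an exponentially decaying piece and an asymptotically smoothing piece. Fix $\zeta_{01},\zeta_{02}\in\widehat{\mathcal{B}}^0$, let $\Upsilon^i=(U_i,\Phi_i)=\mathcal{S}(\cdot)\zeta_{0i}$ be the corresponding quasi-strong solutions, and set $\widetilde{U}=U_1-U_2$, $\widetilde{\Phi}=\Phi_1-\Phi_2$, $\widetilde{\Upsilon}=(\widetilde{U},\widetilde{\Phi})$, which satisfy the linearized system (\ref{pp7})--(\ref{pp8}). I would write $\widetilde{\Upsilon}=\widetilde{\Upsilon}_L+\widetilde{\Upsilon}_K$, where $\widetilde{\Upsilon}_L=(\widetilde{U}_L,\widetilde{\Phi}_L)$ solves the \emph{homogeneous} linearized problem (i.e. (\ref{pp7})--(\ref{pp8}) with the nonlinear difference $F(U_1)-F(U_2)$ deleted) with initial datum $\zeta_{01}-\zeta_{02}$, while $\widetilde{\Upsilon}_K=(\widetilde{U}_K,\widetilde{\Phi}_K)$ solves the same system forced by $-(F(U_1)-F(U_2))$ with zero initial datum. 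Then $L:=\widetilde{\Upsilon}_L(t^*)$ and $K:=\widetilde{\Upsilon}_K(t^*)$ furnish the required splitting of $\mathcal{S}(t^*)\zeta_{01}-\mathcal{S}(t^*)\zeta_{02}$, and it remains to prove (\ref{H2-L}) and (\ref{H2-K}) for a suitable $t^*\ge t_0$.

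For $L$ I would repeat the time-integrated energy argument of Theorem \ref{t:weak-cont} on $\widetilde{\Upsilon}_L$: introducing $\widetilde{\chi}_L=\int_0^t\widetilde{U}_L\,{\rm d}\tau$ and $\widetilde{\Psi}_L=\int_0^t\widetilde{\Phi}_L\,{\rm d}\tau$, the interior and boundary memory couplings cancel exactly as in the passage from (\ref{pp9.1})--(\ref{pp10.1}) to (\ref{pp11}). Since the nonlinear forcing is now absent, the troublesome term in (\ref{pp11}) vanishes, and \emph{retaining} both the coercive term $2\|\widetilde{\chi}_L\|_{\mathbb{V}^1}^2$ and the dissipative contribution $\delta\|\widetilde{\Psi}_L\|_{\mathcal{M}^1_{\Omega,\Gamma}}^2$ furnished by (\ref{operator-T-1}) yields a genuinely dissipative differential inequality. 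Together with the embedding $\mathbb{V}^1\hookrightarrow\mathbb{X}^2$, Gr\"onwall's lemma then gives $\|\widetilde{\Upsilon}_L(t)\|_{\mathcal{H}^{-1,0}_{\Omega,\Gamma}}\le Ce^{-ct}\|\zeta_{01}-\zeta_{02}\|_{\mathcal{H}^{-1,0}_{\Omega,\Gamma}}$ for some $c>0$. Fixing $t^*\ge t_0$ so large that $Ce^{-ct^*}=:\kappa<\tfrac12$ produces (\ref{H2-L}).

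For $K$ the target is the smoothing bound $\|K\|_{\widehat{\mathcal{H}}^{0,1}_{\Omega,\Gamma}}\le\Lambda\|\zeta_{01}-\zeta_{02}\|_{\mathcal{H}^{-1,0}_{\Omega,\Gamma}}$. On $\widehat{\mathcal{B}}^0$ the quasi-strong solutions enjoy the uniform bound (\ref{desire}) and, on $[0,t^*]$, the extra regularity of Definition \ref{d:strong-solution}; writing $f(u_1)-f(u_2)=\big(\int_0^1 f'(\tau u_1+(1-\tau)u_2)\,{\rm d}\tau\big)\widetilde{u}$ and using the subcritical growth (\ref{quasi-assm-1})--(\ref{quasi-assm-2}), I would control the nonlinear difference in the dual norm, $\|F(U_1)-F(U_2)\|_{\mathbb{V}^{-1}}\le C\|\widetilde{U}\|_{\mathbb{V}^{-1}}\le C\|\widetilde{\Upsilon}\|_{\mathcal{H}^{-1,0}_{\Omega,\Gamma}}$, with $C$ controlled via the uniform bounds on $\widehat{\mathcal{B}}^0$ and the quasi-strong regularity on $[0,t^*]$. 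Combined with Theorem \ref{t:weak-cont} (which bounds $\|\widetilde{\Upsilon}(\tau)\|_{\mathcal{H}^{-1,0}_{\Omega,\Gamma}}$ on $[0,t^*]$ by $\|\zeta_{01}-\zeta_{02}\|_{\mathcal{H}^{-1,0}_{\Omega,\Gamma}}$), the forcing of the $\widetilde{\Upsilon}_K$-problem is controlled in $L^\infty(0,t^*;\mathbb{V}^{-1})$ by the weak initial datum. The higher-order energy estimates on the inhomogeneous problem, carried out as in (\ref{qest16})--(\ref{qest21}) of Lemma \ref{t:quasi-ball}, then bootstrap $\widetilde{U}_K$ to the pointwise bound $\|\widetilde{U}_K(t)\|_{\mathbb{V}^1}\le C\|\zeta_{01}-\zeta_{02}\|_{\mathcal{H}^{-1,0}_{\Omega,\Gamma}}$; it is precisely here that the smallness hypothesis (\ref{assk2}) is invoked, to dominate the boundary memory coupling generated by $\mu_\Omega\neq\mu_\Gamma$ and keep the estimate dissipative, just as (\ref{assk}) was used in Lemma \ref{t:weak-ball}. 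Feeding this $\mathbb{V}^1$-bound (with vanishing initial memory datum for $\widetilde{\Phi}_K$) into Lemmata \ref{what-1} and \ref{what-2} supplies the $\mathcal{K}^1_{\Omega,\Gamma}$-bound on the memory component, and (\ref{H2-K}) follows.

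The main obstacle is the smoothing estimate for $K$: one must transfer from data measured in the weak norm $\mathcal{H}^{-1,0}_{\Omega,\Gamma}$ to a bound in the strictly stronger norm $\widehat{\mathcal{H}}^{0,1}_{\Omega,\Gamma}$, and in particular produce the \emph{pointwise} (not merely time-averaged) $\mathbb{V}^1$-control of $\widetilde{U}_K$ that Lemmata \ref{what-1}--\ref{what-2} demand. This hinges on two delicate points: the negative-order control of the nonlinearity, which relies on the subcritical growth together with the regularity of the quasi-strong solutions (exactly why quasi-strong rather than merely weak solutions are needed here); and the treatment of the non-coinciding kernels through (\ref{assk2}), without which the boundary memory terms cannot be absorbed and the regularity transfer breaks down. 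By comparison, the decay of $L$ and the verification of (H3) are routine adaptations of the computation in Theorem \ref{t:weak-cont}.
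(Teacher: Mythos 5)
Your overall architecture is the paper's: the same homogeneous/forced splitting $\bar{\Upsilon}=\bar{\Lambda}+\bar{\Xi}$ into (\ref{vsol1})--(\ref{vsol2}) with datum $\bar{\Upsilon}_0$ and (\ref{wsol1})--(\ref{wsol2}) with zero datum, and your treatment of $L$ by a time-integrated energy estimate is essentially Step 1 of the paper (which uses the exponentially weighted integrals $\bar V^*(t)=\int_0^t e^{-\varepsilon(t-\tau)}\bar V(\tau)\,{\rm d}\tau$ rather than plain antiderivatives, but the device is the same). One correction there: the interior and boundary memory couplings do \emph{not} ``cancel exactly'' as you assert; as in (\ref{fus-9.1}) and (\ref{dc6}) they leave a residual term of size $-m_\Gamma\tfrac{\nu}{2}\|\cdot\|^2_{\mathbb{V}^1}$ with $m_\Gamma=(1-\omega)k_\Gamma(0)$, and it is precisely the smallness condition (\ref{assk2}) that lets this be absorbed into the coercive term so that $m_0=2-m_\Gamma\tfrac{\nu}{2}-\dots>0$. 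So (\ref{assk2}) is needed already in the $L$-step, not only in the $K$-step as you claim.

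The genuine gap is in your Step for $K$. You aim for the literal smoothing bound $\|K\|_{\widehat{\mathcal{H}}^{0,1}_{\Omega,\Gamma}}\le\Lambda\|\zeta_{01}-\zeta_{02}\|_{\mathcal{H}^{-1,0}_{\Omega,\Gamma}}$, and the load-bearing step is the dual-norm Lipschitz estimate $\|F(U_1)-F(U_2)\|_{\mathbb{V}^{-1}}\le C\|\widetilde U\|_{\mathbb{V}^{-1}}$. By duality this amounts to the multiplier $\int_0^1 f'(\tau u_1+(1-\tau)u_2)\,{\rm d}\tau$ being bounded on $H^1(\Omega)$, which requires control of $f''(u)\nabla u$ in $L^3(\Omega)$; with $f'$ of quadratic growth and $u$ only uniformly bounded in $\mathbb{V}^1$ (the bound (\ref{desire}) is the best that persists for all $t\ge0$), one gets $f''(u)\in L^6$, $\nabla u\in L^2$, hence only $L^{3/2}$. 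So this estimate is not available, and the subsequent bootstrap to a \emph{pointwise} bound $\|\widetilde U_K(t)\|_{\mathbb{V}^1}\le C\|\zeta_{01}-\zeta_{02}\|_{\mathcal{H}^{-1,0}}$ via the argument of (\ref{qest16})--(\ref{qest21}) is a parabolic-type smoothing claim that the memory structure does not deliver. The paper sidesteps all of this: in Step 2 it tests (\ref{wsol1}) with $\bar W$, estimates the nonlinear difference in the \emph{energy} duality pairing using (\ref{assm-1})--(\ref{assm-2}), the Sobolev embeddings $H^1(\Omega)\hookrightarrow L^6(\Omega)$, the absorbing-set bound and the continuous dependence (\ref{cde}), arriving at (\ref{vid-2.5}) with $\|\bar\Upsilon_0\|^2_{\mathcal{H}^{0,1}_{\Omega,\Gamma}}$ (not the $\mathcal{H}^{-1,0}$ norm) on the right, and concludes $\|(\bar W(t^*),\bar\Theta^{t^*})\|_{\mathcal{H}^{0,1}_{\Omega,\Gamma}}\le C\|\bar\Upsilon_0\|_{\mathcal{H}^{0,1}_{\Omega,\Gamma}}$ before invoking Lemmata \ref{what-1}--\ref{what-2} for the $\mathcal{K}^1_{\Omega,\Gamma}$ component. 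In short, your $L$-step is fine modulo the placement of (\ref{assk2}), but your $K$-step rests on an unproved (and, with the stated growth conditions in three dimensions, unprovable) negative-norm Lipschitz estimate for $F$; you should replace it with the energy-level estimate of the paper.
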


\begin{proof}
Let $\Upsilon_{01}=(U_{01},\Phi_{01}),\Upsilon_{02}=(U_{02},\Phi_{02}) \in\mathbb{V}^2\times (\mathcal{M}^2_{\Omega,\Gamma} \cap {\rm D(T_r)})$ be such that $\Upsilon_{01},\Upsilon_{02}\in\widehat{\mathcal{B}}^0$. 
For $t>0$, let $\Upsilon^1(t)=(U_1(t),\Phi^t_1)$ and $\Upsilon^2(t)=(U_2(t),\Phi^t_2)$ denote the corresponding global solutions of Problem {\textbf{P}} with the initial datum $\Upsilon_{01}$ and $\Upsilon_{02}$, respectively.
For all $t>0$, set
\begin{align}
\bar\Upsilon(t) & := \Upsilon^1(t) - \Upsilon^2(t)  \notag \\
& = \left( U_1(t),\Phi^t_1 \right) - \left( U_2(t),\Phi^t_2 \right)  \notag \\
& =: \left( \bar U(t),\bar \Phi^t \right),  \notag
\end{align}
and 
\begin{align}
\bar\Upsilon_0 & := \Upsilon_{01} - \Upsilon_{02}  \notag \\ 
& = \left( U_{01},\Phi_{01} \right) - \left( U_{02},\Phi_{02} \right) \notag \\
& = \left( U_{01}-U_{02},\Phi_{01}-\Phi_{02} \right)  \notag \\
& =: \left( \bar U_0,\bar \Phi_0 \right).  \notag
\end{align}
For each $t\ge 0$, decompose the difference $\bar{\Upsilon}(t):=\Upsilon^1(t)-\Upsilon^2(t)$ with $\bar{\Upsilon}_0:=\Upsilon_{01}-\Upsilon_{02}$ as follows,
\begin{align}
\bar{\Upsilon}(t) & = \left( \bar U(t),\bar \Phi^t \right)  \notag \\ 
& = \left( \bar V(t),\bar \Psi^t \right) + \left( \bar W(t),\bar \Theta^t \right),  \notag \\ 
& =: \bar{\Lambda}(t)+\bar{\Xi}(t),  \notag 
\end{align}
where $\bar{\Lambda}(t)$ and $\bar{\Xi}(t)$ are solutions of the following problems in $\bar V(t)=(\bar v_\Omega(t),\bar v_\Gamma(t))$, $\bar\Psi^t=(\bar\psi_\Omega^t,\bar\psi_\Gamma^t)$, $\bar W(t)=(\bar w_\Omega(t),\bar w_\Gamma(t))$ and $\bar\Theta^t=(\bar\vartheta_\Omega^t,\bar\vartheta_\Gamma^t)$: 
\begin{align}
\partial_t \bar V(t) & + {{\rm A_W^{0,\beta,\nu,\omega}}} \bar V(t) + \int_0^\infty \mu_\Omega(s) {{\rm A_W^{\alpha,0,0,\omega}}}\bar\Psi^t(s) {\rm d}s + \nu\int_0^\infty \mu_\Gamma(s) {\rm B}\psi_\Gamma^t(s) {\rm d}s =0,  \label{vsol1}
\end{align}
\begin{equation}  \label{vsol2}
\partial_t\bar\Psi^t={\rm T_r}\bar\Psi^t+\bar V(t),  
\end{equation}
\begin{equation*} \label{vsol3}
\bar\Lambda(0)=\bar\Upsilon_0,
\end{equation*}
and
\begin{align}
\partial_t \bar W(t) & + {{\rm A_W^{0,\beta,\nu,\omega}}} \bar W(t) + \int_0^\infty \mu_\Omega(s) {{\rm A_W^{\alpha,0,0,\omega}}}\bar\Theta^t(s) {\rm d}s + \nu\int_0^\infty \mu_\Gamma(s) {\rm B}\theta_\Gamma^t(s) {\rm d}s \notag \\
& + F(U_1(t))-F(U_2(t))=0,  \label{wsol1}
\end{align}
\begin{equation}  \label{wsol2}
\partial_t\bar\Theta^t={\rm T_r}\bar\Theta^t+\bar W(t),  
\end{equation}
\begin{equation*}
\bar\Xi(0)={\bf 0}. \label{wsol3}
\end{equation*}

{\bf Step 1.} Proof of (\ref{H2-L}).
Let $\varepsilon\in(0,1)$ to be chosen later and define
\begin{equation}  \label{tranv}
\bar V^*(t):=\int_0^t e^{-\varepsilon(t-\tau)}\bar V(\tau){\rm d}\tau \quad \text{and} \quad \bar\Psi^{*t}(s):=\int_0^t e^{-\varepsilon(t-\tau)}\bar\Psi^\tau(s){\rm d}\tau. 
\end{equation}
Observe, $\partial_t\bar V^* +\varepsilon\bar V^*=\bar V^*$ with $\bar V^*(0)=0$, and $\partial_t\bar \Psi^{*t} +\varepsilon\bar \Psi^{*t}=\bar \Psi^{*t}$ with $\bar \Psi^{*t}(0)=0$.
Multiply equations (\ref{vsol1})-(\ref{vsol2}) by $e^{-\varepsilon(t-\tau)}$ and integrate with respect to $\tau$ over $(0,t)$ to find,
\begin{align}
\partial_t \bar V^*(t) & + {{\rm A_W^{0,\beta,\nu,\omega}}} \bar V^*(t) + \int_0^\infty \mu_\Omega(s) {{\rm A_W^{\alpha,0,0,\omega}}}\bar\Psi^{*t}(s) {\rm d}s + \nu\int_0^\infty \mu_\Gamma(s) {\rm B}\bar\psi_\Gamma^{*t}(s) {\rm d}s =0,  \label{dc1}
\end{align}
\begin{equation}  \label{dc2}
\partial_t\bar\Psi^{*t}={\rm T_r}\bar\Psi^{*t}+\bar V^*(t),  
\end{equation}
Multiplying (\ref{dc1}) by $\bar V^*$ in $\mathbb{X}^2$ and (\ref{dc2}) by ${\rm A_W^{0,\beta,\nu,\omega}}\bar\Psi^{*t}$ in $\mathcal{M}^0_{\Omega,\Gamma}=L^2_{\mu_\Omega\oplus\mu_\Gamma}(\mathbb{R}_+;\mathbb{X}^2)$, we easily obtain the differential identities,
\begin{align}
& \frac{{\rm d}}{{\rm d}t}\|\bar V^*\|^2_{\mathbb{X}^2} + 2\|\bar V^*\|^2_{\mathbb{V}^1}  + 2\int_0^\infty \mu_\Omega(s) \langle {{\rm A_W^{\alpha,0,0,\omega}}} \bar\Psi^{*t}(s), \bar V^*(t) \rangle_{\mathbb{X}^2} {\rm d}s \notag \\
& + 2\nu\int_0^\infty \mu_\Gamma(s) \langle {\rm B}\bar\psi_\Gamma^{*t}(s), \bar v^*_\Gamma(t) \rangle_{L^2(\Gamma)} {\rm d}s =0  \label{dc3}
\end{align}
and
\begin{align}
\frac{{\rm d}}{{\rm d}t}\|\bar\Psi^{*t}\|^2_{\mathcal{M}^1_{\Omega,\Gamma}} = 2\langle {\rm T_r}\bar\Psi^{*t},\bar\Psi^{*t} \rangle_{\mathcal{M}^1_{\Omega,\Gamma}} + 2\langle \bar V^*(t),\bar\Psi^{*t} \rangle_{\mathcal{M}^1_{\Omega,\Gamma}}.   \label{dc4}
\end{align}
Of course we employ the basic estimate from (\ref{operator-T-1})
\begin{equation} \label{dc5}
-\langle {\rm T_r}\bar\Psi^{*t},\bar\Psi^{*t} \rangle_{\mathcal{M}^1_{\Omega,\Gamma}} \ge \frac{\delta}{2}\|\bar\Psi^{*t}\|^2_{\mathcal{M}^1_{\Omega,\Gamma}}.
\end{equation}
Next, by virtue of the above estimate (\ref{fus-9.1}), we also have
\begin{align}
& 2\int_0^\infty \mu_\Omega(s) \left\langle {{\rm A_W^{\alpha,0,0,\omega}}}\bar\Psi^{*t}(s),\bar V^*(t) \right\rangle_{\mathbb{X}^2} {\rm d}s + 2\nu\int_0^\infty \mu_\Gamma(s)\left\langle {\rm B }\bar\psi^{*t}_\Gamma(s),\xi^t(s) \right\rangle_{L^2(\Gamma)} {\rm d}s - 2\left\langle \bar V^*(t),\bar\Psi^{*t} \right\rangle _{\mathcal{M}^1_{\Omega,\Gamma}} \notag \\
& = 2\nu\int_0^\infty \mu_\Gamma(s)\left\|\nabla_\Gamma\bar\psi^{*t}_\Gamma(s)\right\|^2_{L^2(\Gamma)}ds+2\beta\nu\int_0^\infty \mu_\Gamma(s)\left\|\bar\psi^{*t}_\Gamma(s)\right\|^2_{L^2(\Gamma)}{\rm d}s \notag \\
& - 2\nu \int_0^\infty \mu_\Gamma(s) \langle \nabla_\Gamma \bar v^*_\Gamma(t), \nabla_\Gamma\bar\psi^{*t}_\Gamma(s)\rangle_{L^2(\Gamma)} {\rm d}s -  2\beta\nu \int_0^\infty \mu_\Gamma(s)  \langle \bar v^*_\Gamma(t),\bar\psi^{*t}_\Gamma(s) \rangle_{L^2(\Gamma)} {\rm d}s \notag \\
& \ge - \frac{\nu}{2}\int_0^\infty\mu_\Gamma(s)\left\|\nabla_\Gamma \bar v^*_\Gamma\right\|^2_{L^2(\Gamma)}{\rm d}s -\frac{\beta\nu}{2}\int_0^\infty\mu_\Gamma(s)\|\bar v^*_\Gamma\|^2_{L^2(\Gamma)}{\rm d}s \notag \\
& \ge -m_\Gamma\frac{\nu}{2} \|\nabla_\Gamma \bar v^*_\Gamma\|^2_{L^2(\Gamma)} - m_\Gamma\frac{\beta\nu}{2} \|\bar v^*_\Gamma\|^2_{L^2(\Gamma)} \notag \\
& \ge -m_\Gamma\frac{\nu}{2}\|\bar V^*\|^2_{\mathbb{V}^1}. \label{dc6}
\end{align}
Combining (\ref{dc3})-(\ref{dc6}) produces
\begin{align}
\frac{{\rm d}}{{\rm d}t} & \left\{ \|\bar V^*\|^2_{\mathbb{X}^2} + \|\bar\Psi^{*t}\|^2_{\mathcal{M}^1_{\Omega,\Gamma}} \right\} + \left( 2-m_\Gamma\frac{\nu}{2} \right)\|\bar V^*\|^2_{\mathbb{V}^1}  \le 0.  \label{dc7}
\end{align}
Recall, $m_\Gamma:=\int_0^\infty\mu_\Gamma(s){\rm d}s.$
Applying the embedding $\mathbb{V}^1\hookrightarrow \mathbb{X}^2$; i.e., 
\begin{align}
\|\bar V^*\|^2_{\mathbb{X}^2} \le C_{\overline{\Omega}} \|\bar V^*\|^2_{\mathbb{V}^1},  \label{dc8}
\end{align}
and according to assumption (\ref{assk2}), we find there is a constant $m_0>0$, suitably small, so that (\ref{dc7})-(\ref{dc8}) become, for almost all $t\ge 0$,
\begin{align*}
\frac{{\rm d}}{{\rm d}t} & \left\{ \|\bar V^*\|^2_{\mathbb{X}^2} + \|\bar\Psi^{*t}\|^2_{\mathcal{M}^1_{\Omega,\Gamma}} \right\} + m_0\left( \|\bar V^*\|^2_{\mathbb{X}^2} + \|\bar\Psi^{*t}\|^2_{\mathcal{M}^1_{\Omega,\Gamma}} \right) \le 0.  \label{dc9}
\end{align*}
After applying a Gr\"{o}nwall inequality, we have that for all $t\ge 0$,
\begin{equation*}  \label{to-C2-L}
\left\| \left( \bar V^*(t),\bar\Psi^{*t} \right) \right\|_{\mathcal{H}^{0,1}_{\Omega,\Gamma}} \le \left\|\bar\Upsilon_0 \right\|_{\mathcal{H}^{0,1}_{\Omega,\Gamma}} e^{-m_0 t/2}.
\end{equation*}
Set $t^\ast:=\max\{ t_0,\frac{2}{m_0}\ln 4 \}$ (recall $t_0$ was defined in (\ref{time0}) in the proof of Lemma \ref{t:weak-ball}).
Then, for all $t\ge t^\ast$, (\ref{H2-L}) holds with $L=\bar\Lambda(t^\ast)=(\bar V^*(t^\ast),\bar\Psi^{*t^*}),$ and 
\begin{equation*}
\ell^\ast= e^{-m_0 t^\ast/2} < \frac{1}{2}.
\end{equation*}
This completes Step 1 of the proof.

{\bf Step 2.} Proof of (\ref{H2-K}).
We begin by multiplying equation (\ref{wsol1}) by $\bar W$ in $\mathbb{X}^2$, then, we multiply the equation (\ref{wsol2}), by ${\rm A_W^{\alpha,0,0,\omega}}\bar\Theta^t$ in $\mathcal{M}^0_{\Omega,\Gamma}=L^2_{\mu_\Omega\oplus\mu_\Gamma}(\mathbb{R}_+;\mathbb{X}^2)$. 
This leaves us with the two identities,
\begin{align}
\left \langle \partial_t\bar W,\bar W \right\rangle_{\mathbb{X}^2} & + \left\langle {\rm A_W^{0,\beta,\nu,\omega}}\bar W,\bar W \right\rangle_{\mathbb{X}^2} + \int_0^\infty \mu_\Omega(s) \left\langle {{\rm A_W^{\alpha,0,0,\omega}}}\bar\Theta^t(s), \bar W(t) \right\rangle_{\mathbb{X}^2} {\rm d}s  \notag \\
&  + \nu\int_0^\infty \mu_\Gamma(s) \left\langle {\rm B}\theta_\Gamma^t(s), \bar W(t) \right\rangle_{\mathbb{X}^2} {\rm d}s + \left\langle F(U_1)-F(U_2),\bar W \right\rangle_{\mathbb{X}^2}  = 0.   \label{vid-1}
\end{align}
and 
\begin{align}
\left\langle \partial_t\bar{\Theta}^t, {\rm A_W^{\alpha,0,0\omega}}\bar\Theta^t \right\rangle_{\mathcal{M}^0_{\Omega,\Gamma}} = \left\langle {\rm T_r} \bar{\Theta}^t,{\rm A_W^{\alpha,0,0\omega}}\bar\Theta^t \right\rangle_{\mathcal{M}^0_{\Omega,\Gamma}} + \left\langle \bar{W}(t), {\rm A_W^{\alpha,0,0\omega}}\bar\Theta^t \right\rangle_{\mathcal{M}^0_{\Omega,\Gamma}}.  \label{vid-2} 
\end{align}
By estimating along the lines described in various arguments already made above, we can find the following,
\begin{align}
\left\langle \partial_t\bar W,\bar W \right\rangle_{\mathbb{X}^2} + \left\langle \partial_t\bar{\Theta}^t, {\rm A_W^{\alpha,0,0\omega}}\bar\Theta^t \right\rangle_{\mathcal{M}^0_{\Omega,\Gamma}} = \frac{1}{2}\frac{{\rm d}}{{\rm d}t} \left\{ \left\|\bar W\right\|^2_{\mathbb{X}^2} + \left\|\bar\Theta^t\right\|^2_{\mathcal{M}^0_{\Omega,\Gamma}} \right\}, \label{vid-2.1}
\end{align}
\begin{align}
\left\langle {\rm A_W^{0,\beta,\nu,\omega}}\bar W,\bar W \right\rangle_{\mathbb{X}^2} = \left\|\bar W\right\|^2_{\mathbb{V}^1}, \label{vid-2.2}
\end{align}
\begin{align}
-\left\langle {\rm T_r} \bar{\Theta}^t,{\rm A_W^{\alpha,0,0\omega}}\bar\Theta^t \right\rangle_{\mathcal{M}^0_{\Omega,\Gamma}} \ge \frac{\delta}{2}\left\|\bar{\Theta}^t\right\|^2_{\mathcal{M}^0_{\Omega,\Gamma}}, \label{vid-2.3}
\end{align}
and 
\begin{align}
& \int_0^\infty \mu_\Omega(s) \left\langle {{\rm A_W^{\alpha,0,0,\omega}}}\bar\Theta^t(s), \bar W(t) \right\rangle_{\mathbb{X}^2} {\rm d}s + \nu\int_0^\infty \mu_\Gamma(s) \left\langle {\rm B}\theta_\Gamma^t(s), \bar w_\Gamma(t) \right\rangle_{L^2(\Gamma)} {\rm d}s - \left\langle \bar{W}(t), {\rm A_W^{\alpha,0,0,\omega}}\bar\Theta^t \right\rangle_{\mathcal{M}^0_{\Omega,\Gamma}} \notag \\
& \ge -m_\Gamma\frac{\nu}{2}\|\bar W\|^2_{\mathbb{V}^1}. \label{vid-2.4}
\end{align}
Using assumptions (\ref{assm-1}) and (\ref{assm-2}) with initial datum taken in the bounded set $\mathcal{B}^0$ and the uniform bound (\ref{weak-bound}), we now estimate the nonlinear terms using the estimates
\begin{align}
\langle f(u_1)-f(u_2),\bar{w}\rangle_{L^2(\Omega)} & \le \|(f(u_1)-f(u_2))\bar{w}\|_{L^1(\Omega)}  \notag \\ 
& \le \|f(u_1)-f(u_2)\|_{L^{6/5}(\Omega)}\|\bar{w}\|_{L^6(\Omega)}  \notag \\ 
& \le \ell_1\|(u_1-u_2)(1+|u_1-u_2|^2)\|_{L^{6/5}(\Omega)}\|\bar{w}\|_{L^6(\Omega)}  \notag \\ 
& \le \ell_1\|u_1-u_2\|_{L^{6}(\Omega)}\left(1+\|u_1-u_2\|^2_{L^3(\Omega)}\right)\|\bar{w}\|_{L^6(\Omega)}  \notag \\ 
& \le C\|\bar{w}\|_{H^1(\Omega)},  \label{func-1}
\end{align}
where $C=C(\ell_1,\Omega,\widetilde{P}_0)>0$ and the last inequality follows from the fact that $H^1(\Omega)\hookrightarrow L^6(\Omega)$ and $H^1(\Omega)\hookrightarrow L^3(\Omega)$.
Similarly for $\widetilde{g}$ (here the estimate is easier because $H^1(\Gamma)\hookrightarrow L^p(\Gamma)$ for any $1\le p<\infty$ as $\Gamma$ is two dimensional), 
\begin{align}
\langle \widetilde{g}(u)-\widetilde{g}(v),\bar{w}\rangle_{L^2(\Gamma)} & \le C\|\bar{w}\|_{H^1(\Gamma)}.  \label{func-2}
\end{align}
Hence, (\ref{func-1}) and (\ref{func-2}) show that, for any $\iota>0,$
\begin{align}
\left|\left\langle F(U_1)-F(U_2),\bar{W} \right\rangle_{\mathbb{X}^2}\right| & \le C_\iota\left\| \bar\Upsilon_0 \right\|^2_{\mathcal{H}^{0,1}_{\Omega,\Gamma}} + \iota\left\| \bar{W} \right\|^2_{\mathbb{V}^1}.  \label{vid-2.5}
\end{align}
Now combining (\ref{vid-1})-(\ref{vid-2.5}) and once more the embedding $\mathbb{V}^1\hookrightarrow \mathbb{X}^2$, we arrive at the inequality
\begin{align}
\frac{1}{2}\frac{{\rm d}}{{\rm d}t} \left\{ \left\|\bar W\right\|^2_{\mathbb{X}^2} + \left\|\bar\Theta^t\right\|^2_{\mathcal{M}^0_{\Omega,\Gamma}} \right\} + C^{-1}_{\overline{\Omega}}\left( 1-m_\Gamma\frac{\nu}{2}-\iota \right) \left\|\bar W\right\|^2_{\mathbb{X}^2} + \frac{\delta}{2}\left\|\bar{\Theta}^t\right\|^2_{\mathcal{M}^0_{\Omega,\Gamma}} \le C_\iota\left\| \bar\Upsilon_0 \right\|^2_{\mathcal{H}^{0,1}_{\Omega,\Gamma}}.  \label{vid-3}
\end{align}
By assumption (\ref{assk2}) there is $\iota>0$, sufficiently small, such that we may set
\[
m_1=\min\left\{ 2C^{-1}_{\overline{\Omega}}\left( 1-m_\Gamma\frac{\nu}{2}-\iota \right), \delta \right\}>0
\]
and in which (\ref{vid-3}) becomes
\begin{align}
\frac{{\rm d}}{{\rm d}t} \left\{ \left\|\bar W\right\|^2_{\mathbb{X}^2} + \left\|\bar\Theta^t\right\|^2_{\mathcal{M}^0_{\Omega,\Gamma}} \right\} + m_1\left( \left\|\bar W\right\|^2_{\mathbb{X}^2} + \left\|\bar{\Theta}^t\right\|^2_{\mathcal{M}^0_{\Omega,\Gamma}} \right) \le C\left\| \bar\Upsilon_0 \right\|^2_{\mathcal{H}^{0,1}_{\Omega,\Gamma}}.  \label{vid-3.1}
\end{align}
Integrating (\ref{vid-3.1}) with respect to $t$ in $[0,T]$, for some fixed $0<T<\infty$, we obtain
\begin{align*}
\left\|\left(\bar W(t), \bar\Theta^t \right)\right\|_{\mathcal{H}^{0,1}_{\Omega,\Gamma}} \le C(T)\left\| \bar\Upsilon_0 \right\|_{\mathcal{H}^{0,1}_{\Omega,\Gamma}}.  \notag
\end{align*}

By following the proof of Lemma \ref{t:quasi-ball}, it also follows that $\|\bar W(t^*)\|_{\mathbb{V}^1}\le K$ for some constant $K>0$, independent of $t.$
Hence, the following bound further follows from Lemmata \ref{what-1} and \ref{what-2} (cf. (\ref{fus-13})),
\begin{align}
\left\|{\rm T_r}\bar\Theta^t\right\|^2_{\mathcal{M}^1_{\Omega,\Gamma}} + \sup_{\tau\ge1} \tau\mathbb{T}(\tau;\bar\Theta^t) & \le C(T)\left\| \bar\Upsilon_0 \right\|^2_{\mathcal{H}^{0,1}_{\Omega,\Gamma}} \left( e^{-\min\{\delta_\Omega,\delta_\Gamma\} t}(t+1) + 1 \right)  \notag \\
& \le C(T)\left\| \bar\Upsilon_0 \right\|^2_{\mathcal{H}^{0,1}_{\Omega,\Gamma}}.   \notag
\end{align}
Thus, letting $T=t^*$ (from Step 1), we obtain, 
\begin{align}
\left\|\left(\bar W(t^*), \bar\Theta^{t^*} \right)\right\|_{\mathcal{H}^{0,1}_{\Omega,\Gamma}} \le C\left\| \bar\Upsilon_0 \right\|_{\mathcal{H}^{0,1}_{\Omega,\Gamma}}.  \notag
\end{align}
Inequality (\ref{H2-K}) now follows with $K= \bar\Xi(t^*)=(\bar{W}(t^*),\bar{\Theta}^{t^*})$. 
This finishes the proof of (H2).
\end{proof}

\begin{lemma} \label{t:to-H3} 
Suppose the assumptions of Theorem \ref{t:quasi-strong} and Lemma \ref{t:weak-ball} hold.
Then condition (H3) holds.
\end{lemma}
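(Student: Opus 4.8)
The plan is to establish the joint Lipschitz continuity required by (H3) by separating the dependence of the base point $(t,\Upsilon_0)\in[t^\ast,2t^\ast]\times\widehat{\mathcal{B}}^0$ on the initial datum from its dependence on time, obtaining a Lipschitz bound for each, and recombining them with the triangle inequality in the metric of $\mathcal{H}^{-1,0}_{\Omega,\Gamma}$. Throughout I would work with the quasi-strong solutions whose data lie in $\widehat{\mathcal{B}}^0$; since $\widehat{\mathcal{B}}^0\subset\mathcal{B}^0$, every such solution is in particular a weak solution emanating from $\mathcal{B}^0$, so the continuous-dependence result of Theorem \ref{t:weak-cont} applies verbatim.

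For the dependence on the datum I would invoke Theorem \ref{t:weak-cont} with the fixed horizon $T:=2t^\ast$. For any two solutions $\Upsilon^1,\Upsilon^2$ issuing from $\Upsilon_{01},\Upsilon_{02}\in\mathcal{B}^0$, estimate (\ref{lowc-1}) yields, for every $t\in[t^\ast,2t^\ast]$,
\[
\|\mathcal{S}(t)\Upsilon_{01}-\mathcal{S}(t)\Upsilon_{02}\|_{\mathcal{H}^{-1,0}_{\Omega,\Gamma}}\le e^{4C(t^\ast)^2}\|\Upsilon_{01}-\Upsilon_{02}\|_{\mathcal{H}^{-1,0}_{\Omega,\Gamma}},
\]
whose prefactor is a fixed constant precisely because $t^\ast$ is fixed. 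This is the Lipschitz-in-datum half of (H3). For the dependence on time I would exploit the extra regularity of quasi-strong solutions. Estimate (\ref{qest17}) in the proof of Lemma \ref{t:quasi-ball} provides the uniform-in-time bounds $\|\partial_t U(\tau)\|_{\mathbb{X}^2}^2+\|\partial_t\Phi^\tau\|_{\mathcal{M}^1_{\Omega,\Gamma}}^2\le Q(R)$ for all $\tau\ge0$. Using the Gelfand-triple embedding $\mathbb{X}^2\hookrightarrow\mathbb{V}^{-1}$ and the continuous (non-compact) embedding $\mathcal{M}^1_{\Omega,\Gamma}\hookrightarrow\mathcal{M}^0_{\Omega,\Gamma}$, together with $U(t_1)-U(t_2)=\int_{t_2}^{t_1}\partial_t U(\tau)\,{\rm d}\tau$ and $\Phi^{t_1}-\Phi^{t_2}=\int_{t_2}^{t_1}\partial_t\Phi^\tau\,{\rm d}\tau$, I obtain for $t_1,t_2\in[t^\ast,2t^\ast]$
\[
\|\mathcal{S}(t_1)\Upsilon_0-\mathcal{S}(t_2)\Upsilon_0\|_{\mathcal{H}^{-1,0}_{\Omega,\Gamma}}\le C\,Q(R)^{1/2}\,|t_1-t_2|,
\]
a genuine Lipschitz bound in time with constant controlled by the radius of $\widehat{\mathcal{B}}^0$.

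Finally, the triangle inequality
\[
\|\mathcal{S}(t_1)\Upsilon_{01}-\mathcal{S}(t_2)\Upsilon_{02}\|_{\mathcal{H}^{-1,0}_{\Omega,\Gamma}}\le\|\mathcal{S}(t_1)\Upsilon_{01}-\mathcal{S}(t_1)\Upsilon_{02}\|_{\mathcal{H}^{-1,0}_{\Omega,\Gamma}}+\|\mathcal{S}(t_1)\Upsilon_{02}-\mathcal{S}(t_2)\Upsilon_{02}\|_{\mathcal{H}^{-1,0}_{\Omega,\Gamma}}
\]
combines the two estimates into the desired joint Lipschitz bound on $[t^\ast,2t^\ast]\times\widehat{\mathcal{B}}^0$, which is exactly (H3). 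The only delicate points are bookkeeping rather than analysis: one must evaluate the constant in (\ref{lowc-1}) at the \emph{fixed} horizon $T=2t^\ast$ so that it does not degenerate, and one must note that the uniform bounds (\ref{qest17}) — derived for smooth data in $\mathbb{H}^2$ — suffice for all of $\widehat{\mathcal{B}}^0$ by the density argument recorded in the remark preceding Lemma \ref{t:quasi-ball}. Since no fresh energy estimate is needed, I expect no substantive obstacle beyond this bookkeeping.
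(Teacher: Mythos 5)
Your proposal follows essentially the same route as the paper: split $\|\mathcal{S}(t_1)\Upsilon_{01}-\mathcal{S}(t_2)\Upsilon_{02}\|_{\mathcal{H}^{-1,0}_{\Omega,\Gamma}}$ by the triangle inequality, control the dependence on the datum via Theorem \ref{t:weak-cont} at the fixed horizon $T=2t^\ast$, and control the dependence on time using the regularity of the quasi-strong solutions. In fact your treatment of the time-Lipschitz half is more explicit than the paper's, which merely asserts Lipschitz continuity in $t$ from the uniform bound (\ref{weak-bound}); your use of the derivative bound (\ref{qest17}) together with $U(t_1)-U(t_2)=\int_{t_2}^{t_1}\partial_tU(\tau)\,{\rm d}\tau$ and the continuous embeddings $\mathbb{X}^2\hookrightarrow\mathbb{V}^{-1}$, $\mathcal{M}^1_{\Omega,\Gamma}\hookrightarrow\mathcal{M}^0_{\Omega,\Gamma}$ is the correct justification of that step.
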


\begin{proof}
Let $R>0$ and $\Upsilon_{01}=(U_{01},\Phi_{01}),\Upsilon_{02}=(U_{02},\Phi_{02}) \in\mathbb{V}^2\times (\mathcal{M}^2_{\Omega,\Gamma} \cap {\rm D(T_r)})$ be such that $\|\Upsilon_{01}\|_{\widehat{\mathcal{H}}^{0,1}_{\Omega,\Gamma}} \le R$ and $\|\Upsilon_{02}\|_{\widehat{\mathcal{H}}^{0,1}_{\Omega,\Gamma}} \le R$.
Let $t_1,t_2\in[t^*,2t^*]$.
In the norm of $\mathcal{H}^{-1,0}_{\Omega,\Gamma}$, we calculate
\begin{align}
& \left\|\mathcal{S}(t_1)\Upsilon_{01}-\mathcal{S}(t_2)\Upsilon_{02}\right\|_{\mathcal{H}^{-1,0}_{\Omega,\Gamma}}  \notag \\ 
& \le \left\|\mathcal{S}(t_1)\Upsilon_{01}-\mathcal{S}(t_1)\Upsilon_{02}\right\|_{\mathcal{H}^{-1,0}_{\Omega,\Gamma}} + \left\|\mathcal{S}(t_1)\Upsilon_{02}-\mathcal{S}(t_2)\Upsilon_{02}\right\|_{\mathcal{H}^{-1,0}_{\Omega,\Gamma}}.  \label{h3-1}
\end{align}
The first term on the right-hand side of (\ref{h3-1}) is bounded uniformly in $t$ on compact intervals by (\ref{lowc-1}).
Also, directly from (\ref{weak-bound}) there holds, 
\begin{equation*}
\left\| \mathcal{S}(t)\Upsilon_0 \right\|_{\mathcal{H}^{-1,0}_{\Omega,\Gamma}} \le P_0,
\end{equation*}
but where now the size of the initial datum, $R$, depends on the size of $\widehat{\mathcal{B}}^0$. 
Hence, on the compact interval $[t^{\ast },2t^{\ast }]$, the map $t\mapsto S(t)\Upsilon_0$ is Lipschitz continuous for each fixed $\Upsilon_0\in \widehat{\mathcal{B}}^0$.
This means there is a constant $L=L(t^{\ast })>0$ such that
\begin{equation*}
\left\|\mathcal{S}(t_1)\Upsilon_0 - \mathcal{S}(t_2)\Upsilon_0 \right\|_{\mathcal{H}^{-1,0}_{\Omega,\Gamma}} \leq L|t_1-t_2|.
\end{equation*}
Therefore, (C3) follows.
This concludes the proof. 
\end{proof}

\begin{remark}
According to Proposition \ref{abstract1} the semigroup of solution operators $\mathcal{S}(t):\widehat{\mathcal{H}}^{0,1}_{\Omega,\Gamma}\rightarrow \widehat{\mathcal{H}}^{0,1}_{\Omega,\Gamma}$ possesses a finite dimensional exponential attractor $\mathcal{E}^{-1}$ that is bounded in $\widehat{\mathcal{H}}^{0,1}_{\Omega,\Gamma}$, $\mathcal{E}^{-1}\subset \widehat{\mathcal{B}}^0$, compact in $\mathcal{H}^{-1,0}_{\Omega,\Gamma}$, and which attracts bounded subsets of $\widehat{\mathcal{B}}^0$ exponentially fast (in the topology of $\widehat{\mathcal{H}}^{0,1}_{\Omega,\Gamma}$). 
In order to show that the attraction property (3) in Theorem \ref{t:wk-exp-attr} also holds---that is, in order to show that the basin of attraction of $\mathcal{E}^{-1}$ is all of $\widehat{\mathcal{H}}^{0,1}_{\Omega,\Gamma}$---we appeal to the transitivity of the exponential attraction in Proposition \ref{t:exp-attr}.
We already know that the exponential attractor $\mathcal{E}^{-1}$ attracts the bounded absorbing set $\widehat{\mathcal{B}}^0$ exponentially.
So it suffices to show that the absorbing set $\widehat{\mathcal{B}}^0$ also attracts all bounded subsets of $\widehat{\mathcal{H}}^{0,1}_{\Omega,\Gamma}$ exponentially.
We only need to recall Lemma \ref{t:quasi-ball} where the rate of attraction is given in (\ref{strong-decay}).
Hence, we can find $Q(\cdot)>0$ and $\nu_0>0$ so that, for all $t\ge0$, there holds
\begin{equation*}
\mathrm{dist}_{\widehat{\mathcal{H}}^{0,1}_{\Omega,\Gamma}}(\mathcal{S}(t)B,\widehat{\mathcal{B}}^0) \le Q(\|B\|_{\widehat{\mathcal{H}}^{0,1}_{\Omega,\Gamma}}) e^{-\nu_0 t}.
\end{equation*}
The desired attraction property follows from the embedding $\widehat{\mathcal{H}}^{0,1}_{\Omega,\Gamma}\hookrightarrow\mathcal{H}^{-1,0}_{\Omega,\Gamma}$.
\end{remark}

\appendix
\section{}

For the reader's convenience we report some important results that are needed in the article.

The following lemma is from \cite[Lemma 2.2]{Gal&Grasselli08}. It is in the spirit of the $H^s$-elliptic regularity estimate that can be found in \cite[Theorem II.5.1]{Lions&Magenes72}. 

\begin{lemma}  \label{t:appendix-lemma-3}
Consider the linear boundary value problem,
\begin{equation*}  \label{appendix-BVP}
\left\{\begin{array}{rl}
-\Delta u+\alpha u & = \psi_1 \quad\text{in}\quad\Omega, \\ 
-\Delta_{\Gamma}u + \partial_{\mathbf{n}} u + \beta u & = \psi_2 \quad\text{on}\quad\Gamma.
\end{array}\right.
\end{equation*}
If $(\psi_1,\psi_2)^{\rm tr}\in H^s(\Omega)\times H^s(\Gamma)$, for $s\geq 0$ and $s+\frac{1}{2} \not\in\mathbb{N}$, then the following estimate holds for some constant $C>0$,
\begin{equation*}  \label{H2-regularity-estimate}
\|u\|_{H^{s+2}(\Omega)} + \|u\|_{H^{s+2}(\Gamma)} \leq C\left( \|\psi_1\|_{H^s(\Omega)} + \|\psi_2\|_{H^s(\Gamma)} \right).
\end{equation*}
\end{lemma}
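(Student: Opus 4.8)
The plan is to reduce the coupled Wentzell boundary value problem to two decoupled pieces---an interior elliptic equation with prescribed Dirichlet data and a single elliptic equation on $\Gamma$ for the unknown trace---and then invoke the standard $H^s$-elliptic regularity of \cite{Lions&Magenes72} on each piece. First I would record the variational formulation: testing the interior equation against $v$, integrating by parts, and substituting $\partial_{\mathbf{n}}u=\psi_2+\Delta_\Gamma u-\beta u$ from the boundary equation, one obtains the identity $a(u,v)=\langle\psi_1,v\rangle_{L^2(\Omega)}+\langle\psi_2,v\rangle_{L^2(\Gamma)}$, where $a$ is precisely the $\mathbb{V}^1$ inner product introduced above. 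For $\alpha,\beta>0$ this form is coercive on $\mathbb{V}^1$, so Lax--Milgram yields a unique weak solution $u\in\mathbb{V}^1$ together with the base estimate $\|u\|_{\mathbb{V}^1}\le C(\|\psi_1\|_{(\mathbb{V}^1)'}+\|\psi_2\|_{(\mathbb{V}^1)'})$. This is the foothold from which I will bootstrap the regularity.

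Next I would split $u=u_1+u_2$, where $u_1$ solves the inhomogeneous interior equation $-\Delta u_1+\alpha u_1=\psi_1$ with homogeneous Dirichlet datum $u_1{\mid}_\Gamma=0$, and $u_2$ is $(-\Delta+\alpha)$-harmonic with $u_2{\mid}_\Gamma=g:=u{\mid}_\Gamma$. By the interior estimate of \cite[Theorem II.5.1]{Lions&Magenes72}, $\psi_1\in H^s(\Omega)$ gives $u_1\in H^{s+2}(\Omega)$ and hence $\partial_{\mathbf{n}}u_1\in H^{s+1/2}(\Gamma)$; the hypothesis $s+\tfrac12\notin\mathbb{N}$ is exactly what avoids the exceptional trace spaces in that theorem. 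Writing $\Lambda$ for the Dirichlet-to-Neumann map of $-\Delta+\alpha$, so that $\partial_{\mathbf{n}}u_2=\Lambda g$, the boundary equation collapses to the single equation on $\Gamma$,
\[
(-\Delta_\Gamma+\Lambda+\beta)g=\psi_2-\partial_{\mathbf{n}}u_1,
\]
for the unknown trace $g$.

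The heart of the argument is the observation that $-\Delta_\Gamma+\Lambda+\beta$ is an \emph{elliptic} pseudodifferential operator of order $2$ on the closed manifold $\Gamma$: the Laplace--Beltrami part supplies the principal symbol, $\Lambda$ is of order $1$ (a lower-order perturbation), and $\beta$ is of order $0$. Since the associated Wentzell operator ${\rm A_W^{\alpha,\beta,\nu,\omega}}$ is self-adjoint and positive for $\alpha,\beta>0$, this operator is invertible, and elliptic regularity on $\Gamma$ yields the isomorphism $(-\Delta_\Gamma+\Lambda+\beta)^{-1}\colon H^s(\Gamma)\to H^{s+2}(\Gamma)$. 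Because $\psi_2\in H^s(\Gamma)$ and $\partial_{\mathbf{n}}u_1\in H^{s+1/2}(\Gamma)\subset H^s(\Gamma)$, the right-hand side lies in $H^s(\Gamma)$, whence $g\in H^{s+2}(\Gamma)$ with the corresponding bound. Feeding $g\in H^{s+2}(\Gamma)\subset H^{s+3/2}(\Gamma)$ back as Dirichlet data, a second application of \cite[Theorem II.5.1]{Lions&Magenes72} gives $u_2\in H^{s+2}(\Omega)$, so that $u=u_1+u_2\in H^{s+2}(\Omega)$ and $u{\mid}_\Gamma=g\in H^{s+2}(\Gamma)$; summing the two norm bounds produces the claimed estimate.

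I expect the main obstacle to be the rigorous justification of the mapping properties of the Dirichlet-to-Neumann operator $\Lambda$ and the ellipticity and invertibility of $-\Delta_\Gamma+\Lambda+\beta$ across the whole range $s\ge0$, together with the careful bookkeeping of the excluded values $s+\tfrac12\in\mathbb{N}$, where the trace and interpolation theory of \cite{Lions&Magenes72} degenerates. One may either quote the pseudodifferential calculus directly or, to remain elementary, run a local difference-quotient argument in boundary-normal coordinates and then interpolate and induct in $s$. In either case it is precisely the coupling of the two equations through $\partial_{\mathbf{n}}u$ that forces the reduction via $\Lambda$, rather than a naive direct application of interior regularity.
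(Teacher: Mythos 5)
The paper does not actually prove this lemma: it is quoted verbatim in the appendix from \cite[Lemma 2.2]{Gal&Grasselli08} and attributed to the circle of ideas around \cite[Theorem II.5.1]{Lions&Magenes72}, so there is no in-paper proof to compare against. Judged on its own, your argument is a sound reconstruction of the standard proof of $H^{s+2}$-regularity for Wentzell-type problems: Lax--Milgram in $\mathbb{V}^1$ for the base case, the splitting $u=u_1+u_2$ into a homogeneous-Dirichlet part and an $(-\Delta+\alpha)$-harmonic extension of the unknown trace, and the reduction of the boundary condition to the equation $(-\Delta_\Gamma+\Lambda+\beta)g=\psi_2-\partial_{\mathbf n}u_1$ on the closed manifold $\Gamma$, where the Dirichlet-to-Neumann map $\Lambda$ is an order-one, nonnegative perturbation of the order-two elliptic operator $-\Delta_\Gamma+\beta$; positivity of the form $\langle\Lambda g,g\rangle=\int_\Omega(|\nabla u_2|^2+\alpha|u_2|^2)$ plus ellipticity gives invertibility $H^{s+2}(\Gamma)\to H^s(\Gamma)$, and feeding $g\in H^{s+2}(\Gamma)\subset H^{s+3/2}(\Gamma)$ back into the Dirichlet problem closes the loop. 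Two small points deserve care. First, coercivity does not require both $\alpha>0$ and $\beta>0$; $(\alpha,\beta)\neq(0,0)$ suffices via the Sobolev--Poincar\'e inequality recorded in Section 1, which matches how the paper states the isomorphism property of ${\rm A_W^{\alpha,\beta,\nu,\omega}}$. Second, the pseudodifferential description of $\Lambda$ presupposes more boundary smoothness than the $\mathcal{C}^2$ assumed in the paper, so for large $s$ the statement implicitly requires $\Gamma$ smooth enough; for the only case the paper actually uses ($s=0$, i.e.\ $H^2$-regularity) the needed mapping property of $\Lambda$ follows from elementary trace theory, and an alternative that avoids pseudodifferential calculus entirely is the half-derivative bootstrap between the interior Dirichlet problem and the boundary equation. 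You flag both issues yourself, so the proposal stands as a correct, if necessarily more detailed, substitute for the citation.
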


The following result is the so-called transitivity property of exponential attraction from \cite[Theorem 5.1]{FGMZ04}.

\begin{proposition}  \label{t:exp-attr}
Let $(\mathcal{X},d)$ be a metric space and let $S_t$ be a semigroup acting on this space such that 
\[
d(S_t x_1,S_t x_2) \le C e^{Kt} d(x_1,x_2),
\]
for appropriate constants $C$ and $K$. 
Assume that there exists three subsets $U_1$,$U_2$,$U_3\subset\mathcal{X}$ such that 
\[
{\rm dist}_\mathcal{X}(S_t U_1,U_2) \le C_1 e^{-\alpha_1 t}, \quad{\rm dist}_\mathcal{X}(S_t U_2,U_3) \leq C_2 e^{-\alpha_2 t}.
\]
Then 
\[
{\rm dist}_\mathcal{X}(S_t U_1,U_3) \le C' e^{-\alpha' t},
\]
where $C'=CC_1+C_2$ and $\alpha'=\frac{\alpha_1\alpha_1}{K+\alpha_1+\alpha_2}$.
\end{proposition}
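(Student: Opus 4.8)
The plan is to exploit the semigroup decomposition $S_t = S_{t-s}\circ S_s$ for an intermediate time $s\in(0,t)$ that I will optimize at the very end. The guiding intuition is that a trajectory issuing from $U_1$ first relaxes toward $U_2$ over $[0,s]$, and the resulting near-$U_2$ point is then driven toward $U_3$ over $[s,t]$; the Lipschitz-type growth bound $d(S_\tau x_1, S_\tau x_2)\le Ce^{K\tau}d(x_1,x_2)$ is what allows me to transport the first error forward in time, at the cost of a controlled exponential amplification. Throughout I use that ${\rm dist}_{\mathcal{X}}$ is the Hausdorff semidistance, so each attraction hypothesis provides, for every point of the evolved set and every slack $\epsilon>0$, a nearby point of the target set.

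First I would fix $x_1\in U_1$ and $s\in(0,t)$. Since ${\rm dist}_{\mathcal{X}}(S_s U_1, U_2)\le C_1 e^{-\alpha_1 s}$, there is a point $y\in U_2$ (up to an arbitrarily small slack that drops out at the end) with $d(S_s x_1, y)\le C_1 e^{-\alpha_1 s}$. Applying $S_{t-s}$ and invoking the growth estimate gives
\[
d(S_t x_1, S_{t-s} y)=d\bigl(S_{t-s}S_s x_1,\, S_{t-s} y\bigr)\le C e^{K(t-s)}\, d(S_s x_1,y)\le CC_1\, e^{K(t-s)}e^{-\alpha_1 s}.
\]
Because $y\in U_2$, the point $S_{t-s}y$ lies in $S_{t-s}U_2$, so the second hypothesis furnishes $w\in U_3$ with $d(S_{t-s}y, w)\le C_2 e^{-\alpha_2(t-s)}$. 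The triangle inequality then yields
\[
{\rm dist}_{\mathcal{X}}(S_t x_1, U_3)\le d(S_t x_1, w)\le CC_1\, e^{K(t-s)-\alpha_1 s}+C_2\, e^{-\alpha_2(t-s)},
\]
and taking the supremum over $x_1\in U_1$ promotes the left-hand side to ${\rm dist}_{\mathcal{X}}(S_t U_1, U_3)$.

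It remains to choose $s$ so as to balance the two competing exponentials. Writing $s=\theta t$ with $\theta\in(0,1)$, the two exponents become $[K-(K+\alpha_1)\theta]\,t$ and $-\alpha_2(1-\theta)\,t$; equating them forces $\theta=(K+\alpha_2)/(K+\alpha_1+\alpha_2)$, whereupon both collapse to $-\alpha' t$ with $\alpha'=\alpha_1\alpha_2/(K+\alpha_1+\alpha_2)$ (the rate recorded in the statement, where $\alpha_1\alpha_1$ is an evident typo for $\alpha_1\alpha_2$). With this choice the estimate reads ${\rm dist}_{\mathcal{X}}(S_t U_1,U_3)\le (CC_1+C_2)e^{-\alpha' t}$, giving $C'=CC_1+C_2$ as claimed, and the residual $\epsilon$-slack is absorbed by letting $\epsilon\to0$. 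The only genuine subtlety is this optimization: the amplification factor $e^{K(t-s)}$ in the transport step is exactly what degrades the naive rate $\min\{\alpha_1,\alpha_2\}$ to the reduced rate $\alpha'$, and one must split the interval \emph{proportionally} to $t$ — rather than, say, at the midpoint — so that the loss accrued over $[0,s]$ and the residual decay over $[s,t]$ share a common exponential rate.
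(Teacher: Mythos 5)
Your proof is correct and is exactly the standard argument for this result: the paper itself states the proposition without proof, citing \cite[Theorem 5.1]{FGMZ04}, and your interpolation-at-time $s=\theta t$ with $\theta=(K+\alpha_2)/(K+\alpha_1+\alpha_2)$ reproduces that reference's proof, yielding the constants $C'=CC_1+C_2$ and $\alpha'=\alpha_1\alpha_2/(K+\alpha_1+\alpha_2)$. You are also right that the exponent $\alpha_1\alpha_1$ in the statement is a typo for $\alpha_1\alpha_2$, consistent with the cited source.
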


The following statement refers to a frequently used Gr\"{o}nwall-type inequality that is useful when working with dissipation arguments. 
We also refer the reader to \cite[Lemma 2.1]{Conti-Pata-2005}, \cite[Lemma 2.2]{Grasselli&Pata02}, \cite[Lemma 5]{Pata&Zelik06-2}.

\section*{Acknowledgments}

The author gratefully acknowledges the anonymous referees for their careful reading of the manuscript and their many insightful suggestions. 

\providecommand{\bysame}{\leavevmode\hbox to3em{\hrulefill}\thinspace}
\providecommand{\MR}{\relax\ifhmode\unskip\space\fi MR }
\providecommand{\MRhref}[2]{%
  \href{http://www.ams.org/mathscinet-getitem?mr=#1}{#2}
}
\providecommand{\href}[2]{#2}

\end{document}